\definecolor{blue}{rgb}{0,0.0,0.9}
\def\eps{\varepsilon}
\def\C{{\mathbb C}}
\def\R{{\mathbb R}}
\def\d{{\mathrm d}}
\def\e{{\rm e}}
\def\tr|{|\!|\!|}
\DeclareMathOperator\Real {Re}
\theoremstyle{plain}% default
\newtheorem{theorem}{Theorem}[section]
\newtheorem{lemma}[theorem]{Lemma}%[section]
\newtheorem{proposition}[theorem]{Proposition}%[section]
\theoremstyle{definition}
\newtheorem{remark}{Remark}[section]
\numberwithin{equation}{section}
\numberwithin{table}{section}
\numberwithin{figure}{section}
\begin{document}

\title[Maximal parabolic regularity and energy estimates]
{Combining maximal regularity and energy estimates for
 time discretizations\\
  of quasilinear parabolic equations} 
%\titlerunning{Backward difference methods for quasi-linear parabolic equations}

\author[Georgios Akrivis]{Georgios Akrivis}
\address{Department of Computer Science \& Engineering, University of Ioannina, 451$\,$10
Ioannina, Greece} 
\email {\sf{akrivis{\it @}cs.uoi.gr} }

\author[Buyang Li]{Buyang Li}
\address{Department of Applied Mathematics, The Hong Kong Polytechnic University, Kowloon, Hong Kong.} 
\email {\sf{buyang.li{\it @}polyu.edu.hk} }

\author[Christian Lubich]{Christian Lubich}
\address{Mathematisches Institut, Universit\"at T\"ubingen, Auf der Morgenstelle, 
D-72076 T\"ubingen, Germany} 
\email {\sf{lubich{\it @}na.uni-tuebingen.de} }

%\thanks{\\
%The work of G.A.\ was partially supported by GSRT-ESET ``Excellence'' 
%grant 1456.} 

\keywords{BDF methods, maximal regularity, energy technique,
parabolic equations, stability, maximum norm estimates}
\subjclass[2010]{Primary 65M12, 65M15; Secondary 65L06.}

%\date{}
\date{\today}

\begin{abstract}
We analyze fully implicit and linearly implicit backward difference formula (BDF) methods
%and algebraically stable implicit Runge--Kutta methods 
for quasilinear parabolic equations, without making any assumptions on the growth  
or decay of the coefficient functions. We combine maximal parabolic regularity and 
energy estimates to derive optimal-order error bounds for the time-discrete approximation 
to the solution and its gradient in the maximum norm and energy norm.
\end{abstract}

\maketitle

\section{Introduction}\label{Sec:intr}
In this paper we study the time discretization of quasilinear parabolic differential equations 
by backward difference formulas (BDF). In contrast to the existing literature, we allow for 
solution-dependent coefficients in the equation that degenerate as the argument grows to 
infinity or approaches a singular set. To deal with such problems, we need to control the 
maximum norm of the error and possibly also of its gradient. As we show in this paper, 
such maximum norm estimates for BDF time discretizations become available by combining 
two techniques:
\begin{enumerate}[\small $\bullet$]\itemsep=0pt
%\begin{itemize}
\item \emph{discrete maximal parabolic regularity}, as studied in 
Kov\'acs, Li \& Lubich \cite{KLL} based on the characterization of maximal $L^p$-regularity 
by Weis \cite{Weis2} and a discrete operator-valued Fourier multiplier theorem by Blunck
 \cite{Blu1}; and 
\item \emph{energy estimates}, which are familiar for implicit Euler time discretizations 
and have  become feasible for higher-order BDF methods (up to order 5) by the 
Nevan\-linna-Odeh multiplier technique \cite{NO} as used in Akrivis \& Lubich \cite{AL}.
%\end{itemize}
\end{enumerate}
In Section~\ref{Sec:statement} we formulate the parabolic initial and boundary value problem 
and its time discretization by fully implicit and linearly implicit BDF methods, and we state 
our main results. For problems on a bounded Lipschitz domain $\varOmega$ in $\R^2$ 
or $\R^3$ with sufficiently regular solutions, we obtain optimal-order error bounds in the 
maximum and energy norms. For problems on bounded smooth domains in arbitrary space
dimension we obtain optimal-order error bounds even in the 
$L^\infty(0,T;W^{1,\infty}(\varOmega))$ and $L^2(0,T;H^2(\varOmega))$ norms.

The proof of these results makes up the remainder of the paper. In Section~\ref{Sec:abstract} 
we formulate a common abstract framework for both results and give a continuous-time
perturbation result after which we model the stability proof of BDF methods in 
Section~\ref{Sec:stability}. This proof relies on time-discrete maximal regularity and 
energy estimates. In Section~\ref{Sec:existence} we study existence and uniqueness 
of the numerical solutions, and in Section~\ref{Sec:consistency} we discuss the consistency 
error of the fully and linearly implicit BDF methods. In the short Section~\ref{Sec:proof} we 
combine the obtained estimates to prove the error bounds of our main results. In the 
remaining Sections~\ref{Sec:unif-maxreg} and \ref{Sec:Sobolev}, which use different 
techniques of analysis, it is verified that the concrete parabolic problem and its time 
discretization fit into our abstract framework. In particular, the uniform discrete maximal 
parabolic regularity of the BDF methods is shown in the required $L^q$ and $W^{-1,q}$ 
settings.% (with $q>d$).

While we restrict our attention in this paper to semidiscretization in time of standard 
quasilinear parabolic equations by BDF methods, the combination of discrete maximal 
regularity and energy estimates to obtain stability and error bounds in the maximum norm 
is useful for a much wider range of problems: for other time discretizations,  for full 
discretizations  (in combination with the discrete maximal regularity of semi-discrete 
finite element methods; see Li \cite{Li15} and Li \& Sun \cite{LS15}), and also for other 
classes of nonlinear parabolic equations. 
%problems. 
%fully nonlinear parabolic equations. 
Such extensions are left to future work. The present paper can thus be viewed as a 
proof of concept for this powerful approach.

\section{Problem formulation and statement of the main results}\label{Sec:statement}
\subsection{Initial and boundary value problem}\label{SSec:ivp}
For a bounded domain $\varOmega\subset \R^d$, a positive $T,$
and a given initial value $u_0,$ 
we consider the following initial and boundary value problem 
for a quasilinear parabolic equation, with homogeneous Dirichlet boundary conditions, 
\begin{equation}
\label{ivp}
\left \{
\begin{alignedat}{3} 
&\partial_t u =\nabla\cdot \big (a(u)\nabla u) \quad && \text{in }\ &&\varOmega\times [0,T],\\
&u =0 \quad && \text{on }\ &&\partial \varOmega\times [0,T],\\
&u (\cdot,0)=u_0 && \text{in }\  &&\varOmega.\\
\end{alignedat}
\right .
\end{equation}
We assume that $a$ is a positive smooth function on the real line,
but impose otherwise no growth or decay 
conditions on $a$ (for example, we may have $a(u)=e^u$).
By the maximum principle, 
the solution of the above problem is bounded, 
provided the initial data $u_0$ is bounded; 
note that then there exists a positive number $K$ (depending on 
$\|u\|_{L^\infty(0,T;L^\infty( \varOmega))}$) such that $K^{-1}\le a(u(x,t))\le K$, 
for all $x\in \overline\varOmega$ and $0\le t\le T$. However, boundedness of the 
numerical approximations is not obvious.

\begin{remark}[direct extensions]\label{Re:ext}
Our techniques and results can be directly extended to the following cases:
\begin{enumerate}[\small $\bullet$]\itemsep=0pt
\item The function $a$ is continuously differentiable and positive only in an interval $I\subset \R$
that contains all exact solution values, $u(x,t)\in I;$ in particular, singularities
in $a$ are allowed.
\item $a$ is a function of $x,t$ and $u$: $a=a(x,t,u).$
\item $a(u)$ is a positive definite symmetric $d\times d$ matrix.
\item A semilinear term $f(x,t,u,\nabla u)$ with a smooth function $f$ is added to the right-hand 
side of the differential equation. No growth conditions on $f$ need to be imposed, but we 
assume smoothness of the exact solution.
\end{enumerate}
\end{remark}

\emph{Operator notation}: We consider $A(w)$ defined by 
$A(w)u:=-\nabla\cdot\big (a(w)\nabla u)$ as a linear
operator on $L^q(\varOmega),$ self-adjoint on $L^2(\varOmega).$

\subsection{Fully and linearly implicit BDF methods}\label{SSec:bdf}

\subsubsection{Fully implicit methods}\label{SSSec:fully-im}
We let $t_n=n\tau, n=0,\dotsc,N,$ be a uniform partition of the interval
$[0,T]$ with time step $\tau=T/N,$ and consider general 
$k$-step backward difference formulas (BDF) for the discretization of \eqref{ivp}:
\begin{equation}
\label{bdf:fully-im1}
\frac 1\tau \sum_{j=0}^k\delta_ju_{n-j}=-A(u_n)u_n,
\end{equation}
for $n=k,\dotsc,N,$ where $u_1,\dotsc,u_{k-1}$ are sufficiently accurate given starting 
approximations and the coefficients of the method are given by
\[\delta(\zeta)=\sum_{j=0}^k\delta_j\zeta^j=\sum_{ \ell=1}^k\frac 1 \ell (1-\zeta)^ \ell.\]
The method is known to have order $k$ and to be A$(\alpha)$-stable with angle
$\alpha=90^\circ,90^\circ,86.03^\circ,73.35^\circ, 51.84^\circ,17.84^\circ$
for $k=1,\dotsc,6,$ respectively; see \cite[Section V.2]{HW}.   
A$(\alpha)$-stability is equivalent to $|\arg\delta(\zeta)|\le \pi-\alpha$ for $|\zeta|\le 1.$
Note that the first- and second-order BDF methods are A-stable, that is, 
$\Real \delta(\zeta)\ge 0$ for $|\zeta|\le 1.$ 
%We use the notation 
%%
%\begin{equation}
%\label{bdf:disct-deriv}
%\dot u_n:=\frac 1\tau \sum_{j=0}^k\delta_ju_{n-j},\quad n= k,\dotsc,N,
%\end{equation}
%%
%for the approximation to the time derivative. 

\subsubsection{Linearly implicit methods}\label{SSSec:lin-im}
Since equation \eqref{bdf:fully-im1} is in general nonlinear in the unknown $u_n,$ 
we will also consider the following linearly implicit modification:
\begin{equation}
\label{bdf:lin-im1}
\frac 1\tau \sum_{j=0}^k\delta_ju_{n-j}=
-A\Big (\sum\limits^{k-1}_{j=0}\gamma_ju_{n-j-1}\Big )u_n,
\end{equation}
for $n=k,\dotsc,N,$ with the coefficients $\gamma_0,\dotsc,\gamma_{k-1}$ given by
\[\gamma (\zeta)=\frac 1  \zeta\big [1-(1-\zeta)^k\big ]=\sum_{i=0}^{k-1} \gamma_i\zeta^i.\]
Notice that now the unknown $u_n$ appears in \eqref{bdf:lin-im1} only linearly;
therefore, to advance with \eqref{bdf:lin-im1} in time, we only need to solve, at each time level, 
just one linear equation, which reduces to a linear system if we discretize also in space.

\subsection{Main results}\label{Sec:result}

In this paper we prove the following two results. 

\begin{theorem}\label{Th:main-1}
Let  $\varOmega\subset\R^d$, $d=2,3$, be a bounded Lipschitz domain.  
If the solution $u$ of \eqref{ivp} is sufficiently regular and the initial approximations 
are sufficiently accurate, then there exist ${\tau_0>0}$ and $C<\infty$ such that for 
stepsizes $\tau\le\tau_0$ and $N\tau\le T$, the  fully and linearly implicit BDF methods
 \eqref{bdf:fully-im1} and \eqref{bdf:lin-im1}, respectively, of order $k\le 5$, have unique 
 numerical solutions $u_n\in C(\overline \varOmega)\cap H^1_0(\varOmega)$ with 
 errors bounded by
\begin{align}
\label{est-C1}
\max_{k\leq n\leq N}
\|u_n-u(t_n)\|_{L^\infty(\varOmega)}
\le C\tau^k,
\\[1mm]
\label{est-H2}
\Bigl( \tau \sum_{n=k}^N \|u_n-u(t_n)\|_{H^1(\varOmega)}^2 \Bigr)^{1/2}\le C \tau^k.
\end{align}
\end{theorem}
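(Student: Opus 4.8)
The plan is to recast the time-discrete problem as a perturbation of a \emph{linear} parabolic equation frozen at the exact solution, and to estimate the error $e_n := u_n - u(t_n)$ by combining discrete maximal $L^q$-regularity in a $W^{-1,q}$ setting with an energy estimate obtained via the Nevanlinna--Odeh multiplier. Concretely, write the error equation by inserting $u(t_n)$ into \eqref{bdf:fully-im1} (resp.\ \eqref{bdf:lin-im1}); the left-hand side produces $\tfrac1\tau\sum_{j=0}^k\delta_j e_{n-j}$ plus a consistency defect $d_n$ (the discretization error of the BDF formula applied to the smooth $u$, which is $O(\tau^k)$ in the relevant norm by Section~\ref{Sec:consistency}), while the right-hand side produces the \emph{linear} principal part $-A(u(t_n))e_n$ plus a nonlinear remainder of the schematic form $-\nabla\!\cdot\big((a(u_n)-a(u(t_n)))\nabla u_n\big)$, which, using smoothness of $a$ and of $u$, is controlled by $\|e_n\|_{L^\infty}$ times $\|\nabla e_n\|$ plus lower-order terms — hence it is \emph{quadratic} in the error and absorbable once a bootstrap a priori bound on $\|e_n\|_{L^\infty}$ is in force.

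The heart of the argument is the linear stability estimate for the scheme $\tfrac1\tau\sum_j\delta_j e_{n-j} = -A(u(t_n))e_n + g_n$. Here I would invoke the abstract continuous-time perturbation result of Section~\ref{Sec:abstract} as the template and then mirror it discretely as in Section~\ref{Sec:stability}: split $g_n = d_n + (\text{nonlinear remainder})$, use uniform discrete maximal parabolic regularity of the BDF method (Sections~\ref{Sec:unif-maxreg}, \ref{Sec:Sobolev}) in the pair $L^q(0,T;W^{-1,q})\to L^q(0,T;W^{1,q})$ to bound $\big(\tau\sum_n\|e_n\|_{W^{1,q}}^q\big)^{1/q}$ by the same discrete $L^q$-norm of $g_n$, and then pick $q>d$ so that the Sobolev embedding $W^{1,q}(\varOmega)\hookrightarrow L^\infty(\varOmega)\cap C(\overline\varOmega)$ (valid on a bounded Lipschitz domain for $d=2,3$) converts the $W^{1,q}$-control into the $L^\infty$-bound \eqref{est-C1}. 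The energy estimate enters to handle the part of the perturbation that maximal regularity cannot absorb on its own (the time-discrete analogue of the ``low-order'' perturbation in the abstract lemma), relying on A$(\alpha)$-stability for $k\le5$ and the Nevanlinna--Odeh multiplier $\eta$ with $\Real\big(\delta(\zeta)/(1-\eta\zeta)\big)\ge0$ — this is exactly why the order restriction $k\le5$ appears.

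With linear stability in hand, close the nonlinear estimate by the standard bootstrap: assume inductively that $\max_{m\le n-1}\|e_m\|_{L^\infty}\le \tau^{k-\epsilon}$ (or $\le 1$), which freezes the ellipticity constant $K$ uniformly and makes the nonlinear remainder in $g_n$ bounded by $C(\|e_n\|_{L^\infty}+\cdots)\|\nabla e_n\| \le \tfrac12\cdot(\text{maximal-regularity norm}) + C\tau^k$; solving for the error norm over $0\le t_m\le t_n$ and using $e_0,\dots,e_{k-1}=O(\tau^k)$ (accurate starting values) gives $\big(\tau\sum\|e_n\|_{W^{1,q}}^q\big)^{1/q}\le C\tau^k$, hence $\max_n\|e_n\|_{L^\infty}\le C\tau^k < \tau^{k-\epsilon}$ for $\tau\le\tau_0$, completing the induction. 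The $H^1$-bound \eqref{est-H2} follows either from the same maximal-regularity estimate (since $W^{1,q}\hookrightarrow H^1$ for $q\ge2$ on a bounded domain) or, more robustly, from a companion energy estimate in $L^2$. Existence and uniqueness of $u_n\in C(\overline\varOmega)\cap H^1_0$ come from Section~\ref{Sec:existence} — for the fully implicit method via a fixed-point/Banach argument on a ball in $W^{1,q}$ around $u(t_n)$, and for the linearly implicit method trivially since the frozen operator $A(\sum_j\gamma_j u_{n-j-1})$ is a fixed invertible elliptic operator once the previous iterates are known to be bounded.

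I expect the main obstacle to be establishing \emph{uniform} discrete maximal parabolic regularity of the BDF methods in the nonsmooth $W^{-1,q}$/$W^{1,q}$ scale for the $u$-dependent operators $A(u(t_n))$ on a mere Lipschitz domain — i.e.\ showing the maximal-regularity constant does not blow up as $n,\tau$ vary and is independent of the (bounded but possibly widely varying) coefficient $a(u(t_n))$. This is precisely what Sections~\ref{Sec:unif-maxreg} and \ref{Sec:Sobolev} are devoted to, and it is where the Weis characterization of maximal $L^p$-regularity and Blunck's discrete operator-valued Fourier multiplier theorem, together with resolvent/heat-kernel bounds for divergence-form elliptic operators with bounded measurable coefficients, do the real work; the subsequent energy-plus-bootstrap assembly, by contrast, is comparatively routine.
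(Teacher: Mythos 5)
Your overall strategy is the paper's: consistency defect of size $O(\tau^k)$, discrete maximal parabolic regularity in the $W^{-1,q}/W^{1,q}$ scale on a Lipschitz domain, a Nevanlinna--Odeh energy estimate to handle the low-order part of the perturbation (whence $k\le 5$), and a bootstrap/continuation argument to keep the coefficients uniformly elliptic. However, there is a genuine gap at the step that is supposed to deliver the maximum-norm bound \eqref{est-C1} and, with it, the closing of the bootstrap. You propose to bound $\bigl(\tau\sum_n\|e_n\|_{W^{1,q}}^q\bigr)^{1/q}\le C\tau^k$ by maximal regularity and then use the spatial embedding $W^{1,q}(\varOmega)\hookrightarrow L^\infty(\varOmega)$ for $q>d$ to conclude $\max_n\|e_n\|_{L^\infty}\le C\tau^k$. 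This does not follow: an $\ell^q$-in-time bound with the factor $\tau$ only yields $\max_n\|e_n\|_{W^{1,q}}\le C\tau^{k-1/q}$, so the pointwise-in-time maximum norm estimate (and the uniform a priori bound on $\|u_n\|_{L^\infty}$ that your induction needs to freeze the ellipticity constants) is not obtained this way. The paper's mechanism is different: it is the \emph{space-time} (trace/interpolation) embedding, condition (ii), $W^{1,p}(0,T;W^{-1,q})\cap L^p(0,T;W^{1,q})\hookrightarrow L^\infty(0,T;C^\alpha(\overline\varOmega))$, valid only when $2/p+d/q<1$, applied to the piecewise linear interpolant of the $e_n$. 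This uses, in an essential way, the bound on the discrete time derivatives $\bigl\|(\dot e_n)\bigr\|_{L^p(W^{-1,q})}$ that maximal regularity also provides (transferred to the backward differences via the factorization $\delta(\zeta)=(1-\zeta)\mu(\zeta)$), and it forces the time exponent $p$ to be taken much larger than $q$ in general (e.g.\ $d=3$, $q$ slightly above $3$ requires large $p$); your choice $p=q$ would not satisfy $2/p+d/q<1$. This is also why the starting-value condition \eqref{InitialError} is formulated with such a $p$.

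Two further points are treated too lightly. First, the discrete maximal regularity results are for a \emph{time-independent} operator $A(w)$; to use them with the time-varying coefficients $A(u(t_n))$ you must freeze the operator at a fixed index $m$, estimate the commutator $(A_m-A_n)e_n$ using the Lipschitz continuity of $t\mapsto u(t)$ in $W$, and remove it with a (discrete) Gronwall argument exploiting the factor $(m-n)\tau$ — this step is absent from your outline. Second, the nonlinear remainder $(A(u(t_n))-A(\hat u_n))u_n$ is not simply absorbed as a quadratic term: in the paper it is split (condition (iv)) into $\eps\|e\|_{W}\|u_n\|_{D}+C_{\eps}\|e\|_{L^2}\|u_n\|_{D}$, the $\eps$-part being absorbed via the space-time embedding and the $L^2$-part being estimated to size $O(\tau^k)$ by the separate multiplier energy estimate, which then feeds back into the maximal-regularity inequality; your sketch conflates these two roles. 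Finally, for the fully implicit scheme the paper does not use a contraction on a ball in $W^{1,q}$ but Schaefer's fixed point theorem together with the a priori estimate from the stability proof (compactness of $D\subset W$), since the existence and the closing of the continuation argument are intertwined; this is a structural point you would need to address rather than appeal to a Banach fixed point.
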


\begin{theorem}\label{Th:main-2}
Let the bounded domain $\varOmega\subset\R^d$ be smooth, 
where $d\geq 1$.  
If the solution $u$ of \eqref{ivp} is sufficiently regular and the initial approximations 
are sufficiently accurate, then there exist ${\tau_0>0}$ and $C<\infty$ such that for 
stepsizes $\tau\le\tau_0$ and $N\tau\le T$, the  fully and linearly implicit BDF methods
 \eqref{bdf:fully-im1} and \eqref{bdf:lin-im1}, respectively, of order $k\le 5$, have 
 unique numerical solutions 
 $u_n\in C^1(\overline \varOmega)\cap H^2(\varOmega)\cap H^1_0(\varOmega)$ 
 with errors bounded by
\begin{align}
\label{est-C1-smooth}
\max_{k\leq n\leq N}
\bigl(\|u_n-u(t_n)\|_{L^\infty(\varOmega)}
+\|\nabla u_n-\nabla u(t_n)\|_{L^\infty(\varOmega)} \bigr)
\le C\tau^k,
\\
\label{est-H2-smooth}
\Bigl( \tau \sum_{n=k}^N \|u_n-u(t_n)\|_{H^2(\varOmega)}^2 \Bigr)^{1/2}\le C \tau^k.
\end{align}
\end{theorem}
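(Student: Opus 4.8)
The plan is to recast the error recursion into the abstract perturbation framework of Section~\ref{Sec:abstract} and then close a bootstrap argument that uses, \emph{simultaneously}, the uniform discrete maximal parabolic regularity of the BDF method and the energy estimates underlying the stability result of Section~\ref{Sec:stability}. Write $e_n:=u_n-u(t_n)$. By the consistency analysis of Section~\ref{Sec:consistency}, the exact solution satisfies \eqref{bdf:fully-im1} up to a defect $d_n$ with $\bigl(\tau\sum_{n=k}^N\|d_n\|_{L^q(\varOmega)}^p\bigr)^{1/p}\le C\tau^k$, and subtracting shows that $e_n$ obeys the perturbed scheme
\[
\frac1\tau\sum_{j=0}^k\delta_j e_{n-j}+A(u(t_n))\,e_n
=\bigl(A(u(t_n))-A(u_n)\bigr)u(t_n)+\bigl(A(u(t_n))-A(u_n)\bigr)e_n+d_n,
\]
with an analogous identity for the linearly implicit method \eqref{bdf:lin-im1}, in which $u_n$ in the coefficient argument is replaced by $\sum_{j=0}^{k-1}\gamma_ju_{n-j-1}$ and a further, structurally similar extrapolation perturbation appears. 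Since $a$ is smooth, the first term on the right equals $\nabla\!\cdot\!\bigl(a'(u(t_n))\,e_n\,\nabla u(t_n)\bigr)+O(\|e_n\|_{L^\infty}^2)$: it is linear in $e_n$ but of \emph{lower differential order} than $A(u(t_n))e_n$, while the second term is genuinely quadratic and small once $e_n$ is controlled in a norm embedding into $W^{1,\infty}(\varOmega)$.

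For Theorem~\ref{Th:main-2} I would work in the setting adapted to $W^{1,\infty}$ and $H^2$ control: since $\varOmega$ is smooth, fix $q>d$ so that $W^{2,q}(\varOmega)\cap W^{1,q}_0(\varOmega)\hookrightarrow C^1(\overline\varOmega)$ and into $H^2(\varOmega)$ — note that $q>d$ is genuinely needed, as $H^2(\varOmega)$ does not embed into $W^{1,\infty}(\varOmega)$ in high dimensions. By elliptic regularity on the smooth domain (carried out in Section~\ref{Sec:Sobolev}), $A(w)\colon W^{2,q}\cap W^{1,q}_0\to L^q$ is an isomorphism whose inverse is bounded uniformly for $w$ in a $C^1$-neighbourhood of $\{u(t):0\le t\le T\}$, so that $\|v\|_{W^{2,q}}\lesssim\|A(u(t_n))v\|_{L^q}+\|v\|_{L^q}$. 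The structural input from Sections~\ref{Sec:unif-maxreg}--\ref{Sec:Sobolev} is the \emph{uniform discrete maximal $\ell^p(L^q)$-regularity} of the BDF method for the operator family $\{A(u(t_n))\}_n$: it turns an $\ell^p$-in-time bound on the right-hand side of the error scheme into $\ell^p$-in-time bounds on the discrete time derivative of $e$ and on $A(u(t_n))e_n$, hence on $\|e_n\|_{W^{2,q}}$; combining these two controls also yields $\ell^\infty$-in-time control of $\|e_n\|_{W^{2,q}}$ by a discrete embedding in time (using the regularity of $u$ to bound $\partial_t A(u(t))$).

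In parallel I would derive a priori energy estimates for the error recursion in the $L^2$-based scales: using the Nevanlinna--Odeh multiplier technique for $k\le5$ (as in \cite{AL}) — testing the error equation with the combinations $e_n-\eta_k e_{n-1}$, both directly and after applying $A(u(t_n))$, and exploiting that $A(w)$ is self-adjoint and positive on $L^2(\varOmega)$ — one obtains, conditionally on a bootstrap hypothesis such as $\max_{j<n}\|e_j\|_{W^{1,\infty}}\le1$, a discrete Gronwall bound of the type $\max_{j\le n}\|e_j\|_{H^1}^2+\tau\sum_{j\le n}\|e_j\|_{H^2}^2\lesssim\tau\sum_j\|d_j\|^2+(\text{starting errors})$. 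Then, following the continuous-time perturbation result of Section~\ref{Sec:abstract} and the stability proof of Section~\ref{Sec:stability}, I would run the combined bootstrap: under the same inductive hypothesis the lower-order linear perturbation is estimated in $L^q$ by $C\|e_n\|_{W^{1,q}}$ (one derivative below $\|e_n\|_{W^{2,q}}$) and the quadratic term by $C\|e_n\|_{W^{2,q}}^2$; the resulting $\ell^p$-in-time terms are disposed of by splitting $[0,T]$ into short windows and invoking discrete maximal regularity together with the energy bound, which yields $\max_{k\le n\le N}\|e_n\|_{W^{2,q}}\le C\tau^k$. For $\tau\le\tau_0$ small this lies below the bootstrap threshold, closing the induction; the embeddings $W^{2,q}\hookrightarrow C^1(\overline\varOmega)$ and $W^{2,q}\hookrightarrow H^2(\varOmega)$ then give \eqref{est-C1-smooth} and \eqref{est-H2-smooth}, while existence and uniqueness of $u_n$ in $C^1(\overline\varOmega)\cap H^2(\varOmega)\cap H^1_0(\varOmega)$ follow from Section~\ref{Sec:existence}.

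The main obstacle is exactly the coupling in the last step. The perturbation $\bigl(A(u(t_n))-A(u_n)\bigr)u(t_n)$ is linear in $e_n$ with an $O(1)$, \emph{not} small, coefficient, so it cannot simply be absorbed into the left-hand side of the maximal-regularity estimate; one must genuinely use that it costs only $\|e_n\|_{W^{1,q}}$ and remove it by a discrete Gronwall argument, for which one needs both the $\ell^p$-in-time control produced by discrete maximal regularity and the weaker $L^2/H^1/H^2$ a priori control produced by the energy technique. Making these three ingredients — maximal $\ell^p(L^q)$-regularity, the $L^2$-based energy estimates, and the $O(\tau^k)$ consistency bound — fit together so that the norms match (strong enough to dominate the perturbation, weak enough to be controlled) is the crux, and is why both techniques are indispensable; the higher-order cases $3\le k\le5$ are precisely where the Nevanlinna--Odeh multiplier is essential, since those BDF methods are not A-stable and an energy estimate is otherwise unavailable.
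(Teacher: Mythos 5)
Your overall architecture coincides with the paper's: treat the exact solution values $u(t_n)$ as a perturbed BDF solution with defect $O(\tau^k)$ in $L^q$ (Lemma~\ref{lem:defect}), and close a continuation/bootstrap argument that combines the $W$-locally uniform discrete maximal $\ell^p(L^q)$-regularity of the BDF methods (Lemma~\ref{lem:unif-maxreg-bdf}, applied with the coefficient frozen at the endpoint of the time window, the commutator $\bigl(A(u(t_m))-A(u(t_n))\bigr)e_n$ being absorbed by a discrete Gronwall argument) with the Nevanlinna--Odeh/Dahlquist energy estimate, using a compactness-interpolation splitting of the $O(1)$ linear perturbation into a small part in the strong norm plus a controlled part in $L^2$; this is precisely Proposition~\ref{prop:stability} together with Lemmas~\ref{lem:framework}--\ref{lem:unif-maxreg-bdf}, and your identification of the crux (the non-small linear perturbation, and the indispensability of both techniques for $3\le k\le 5$) is the paper's.

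There is, however, one concrete step that fails as stated: you claim that discrete maximal regularity plus ``a discrete embedding in time'' yields $\max_{k\le n\le N}\|e_n\|_{W^{2,q}(\varOmega)}\le C\tau^k$, and you route both \eqref{est-C1-smooth} and \eqref{est-H2-smooth} and the bootstrap closure through this bound. From $(\dot e_n)\in \ell^p(L^q)$ and $(e_n)\in \ell^p(W^{2,q})$ the trace embedding only gives uniform-in-time control in the real interpolation space $\bigl(L^q(\varOmega),W^{2,q}(\varOmega)\bigr)_{1-1/p,p}=B^{2-2/p,q}_p(\varOmega)$, which for $2/p+d/q<1$ embeds into $C^{1,\alpha}(\overline\varOmega)$ but \emph{not} into $W^{2,q}(\varOmega)$; no mechanism among your tools produces an $\ell^\infty$-in-time $W^{2,q}$ bound, and the theorem does not require one. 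The repair is what the paper does: take $W=C^{1,\alpha}(\overline\varOmega)$ as the uniform-in-time norm (property (ii)); this already suffices for the bootstrap hypothesis $\|e_n\|_{W^{1,\infty}}\le 1$ and gives \eqref{est-C1-smooth}, while \eqref{est-H2-smooth} follows from the $\ell^p(W^{2,q})$ bound with $p>2$, $q\ge 2$ by H\"older in time. A second, smaller point: testing the error equation also ``after applying $A(u(t_n))$'' to get an $H^1/H^2$-level energy estimate is unnecessary and delicate, since Dahlquist's identity needs a \emph{fixed} inner product while $\langle\cdot,A(u(t_n))\cdot\rangle$ varies with $n$; the paper only uses the test function $e_n-\theta_k e_{n-1}$, handles the $n$-dependence of the energy norm via the $(1+C\tau)$ perturbation \eqref{BDF6n}, and the resulting bound $\max_n\|e_n\|_{L^2}\le C\delta$ is all the energy input the maximal-regularity step requires.
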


Let us first comment on the regularity requirements. 
With some $q>d$, we need to assume in Theorem~\ref{Th:main-1}
\begin{equation}\label{reg-1}
u\in C^{k+1}\big ([0,T];W^{-1,q}(\varOmega)\big )
\cap C^{k}\big ([0,T];L^q(\varOmega)\big )\cap C\big ([0,T];W^{1,q}(\varOmega)\big )  ,
\end{equation}
and in Theorem~\ref{Th:main-2}
\begin{equation}\label{reg-2}
u\in C^{k+1}\big ([0,T];L^q(\varOmega)\big )
\cap C^{k}\big ([0,T];W^{1,q}(\varOmega)\big )\cap C\big ([0,T];W^{2,q}(\varOmega)\big ) .
\end{equation}
The errors in the initial data $e_n=u_n-u(t_n)$, for $n=0,\dotsc,k-1$, need to satisfy 
the following bounds:  in Theorem~\ref{Th:main-1},
\begin{equation}
\label{InitialError}
\bigg (\tau \sum_{n=1}^{k-1} 
\Big\|\frac{e_n-e_{n-1}}{\tau}\Big\|^p_{W^{-1,q}(\varOmega)}\bigg )^{\frac{1}{p}}
+\bigg (\tau \sum_{n=1}^{k-1} \|e_n\|^p_{W^{1,q}(\varOmega)}\bigg )^{\frac{1}{p}}
\le C\tau^k,
\end{equation}
for some $p$ such that $2/p+d/q<1$, and similarly in Theorem~\ref{Th:main-2} with 
%$W^{-1,q}$ and $W^{1,q}$ replaced by  $L^q$ and $W^{2,q}$, respectively.
\begin{equation}
\label{InitialError2}
\bigg (\tau \sum_{n=1}^{k-1} 
\Big\|\frac{e_n-e_{n-1}}{\tau}\Big\|^p_{L^q(\varOmega)}\bigg )^{\frac{1}{p}}
+\bigg (\tau \sum_{n=1}^{k-1} \|e_n\|^p_{W^{2,q}(\varOmega)}\bigg )^{\frac{1}{p}}
\le C\tau^k.
\end{equation} 
%It will be seen in Section~\ref{Sec:RK} that 
It can be shown that these bounds are satisfied when the starting values are obtained 
with an algebraically stable implicit Runge--Kutta method of stage order $k$, such as 
the $(k-1)$-stage Radau collocation method.

Error bounds for BDF time discretizations of quasilinear parabolic differential equations 
have previously been obtained by Zl\'amal \cite{Zla}, for $k\le 2$, and by Akrivis \& Lubich 
\cite{AL} for $k\le 5$, using energy estimates. 
Implicit--explicit multistep methods for a class of such equations have been analyzed 
by Akrivis, Crouzeix \& Makridakis \cite{ACM2} by spectral and Fourier techniques. 
In those papers it is, however, assumed 
that the operators $A(u)$ are uniformly elliptic for $u\in H^1_0(\varOmega)$, which amounts 
to assuming that the coefficient function $a$ is bounded on all $\R$ and has a strictly 
positive lower bound on all $\R$. This is a restrictive assumption that is not satisfied 
in many applications.

This restriction can be overcome only by controlling the maximum norm of the numerical 
solution, which is a major contribution of the present paper. 
Since no maximum principle is available for the BDF methods of order higher than 1, 
the boundedness of the numerical solution  is not obvious. While there are some results 
on maximum norm error bounds for implicit  Euler and Crank--Nicolson time discretizations 
of \emph{linear} parabolic equations by  Schatz, Thom\'ee \& Wahlbin 
\cite{STW1}, %\cite{STW1,STW2}, 
we are not aware of any such results for quasilinear parabolic equations 
%\Blue{(with possibly degenerate coefficient)} 
as studied here.

In our view, even more interesting than the above particular results is the novel technique 
by which they are proved: by combining \emph{discrete maximal regularity} and 
\emph{energy estimates}.
%\begin{enumerate}[\small $\bullet$]\itemsep=0pt
%%\begin{itemize}
%\item \emph{discrete maximal parabolic regularity}, as studied in 
%Kov\'acs, Li \& Lubich \cite{KLL} based on the characterization of maximal $L^p$-regularity 
%by Weis \cite{Weis2} and a discrete operator-valued Fourier multiplier theorem by Blunck
% \cite{Blu1}; and 
%\item \emph{energy estimates}, which are familiar for implicit Euler time discretizations 
%and have  become feasible for higher-order BDF methods (up to order 5) by the 
%Nevan\-linna-Odeh multiplier technique \cite{NO} as used in Akrivis \& Lubich \cite{AL}.
%%\end{itemize}
%\end{enumerate}
The combination of these techniques will actually yield $O(\tau^k)$ error bounds in 
somewhat stronger norms than stated in Theorems~\ref{Th:main-1} and \ref{Th:main-2}. 
Moreover, we provide a concise abstract framework in which the combination of maximal 
regularity and energy estimates can be done and which allows for a common proof for 
both Theorems~\ref{Th:main-1} and \ref{Th:main-2}, as well as for direct extensions to 
more general quasilinear parabolic problems than \eqref{ivp}.

\section{Abstract framework and basic approach in continuous time}
\label{Sec:abstract}
As a preparation for the proof of Theorems~\ref{Th:main-1} and \ref{Th:main-2}, it  is helpful 
to illustrate the approach taken in this paper first in a time-continuous and more abstract 
setting, which in particular applies to $A(w)u=-\nabla\cdot (a(w) \nabla u)$ as considered above.

\subsection{Abstract framework} \label{subsec:abstract-framework}
We formulate an abstract setting that works with Hilbert spaces $V\subset H$ and 
Banach spaces $D\subset W \subset X $ 
as follows: Let $H$ be the basic Hilbert space, and let $V$ be another Hilbert space that is 
densely and continuously imbedded in $H$. Together with the dual spaces we then have 
the familiar Gelfand triple of Hilbert spaces $V \subset H=H' \subset V'$ with dense and 
continuous imbeddings, and such that the restriction to $V\times H$ of the duality pairing 
$\langle\cdot,\cdot\rangle$ between $V$ and $V'$ and of the inner product $(\cdot,\cdot)$ 
on $H$ coincide. Let further $X \subset V'$ be a Banach space and let $D\subset W\subset X$ 
be further Banach spaces. We denote the corresponding norms by $\|\cdot \|_H$, 
$\|\cdot \|_V$, $\|\cdot \|_X$,  $\|\cdot \|_W$, and $\|\cdot\|_D$, respectively, and summarize the 
continuous imbeddings:
%$$
%V \subset H=H' \subset V', \quad W \subset X, \quad W\subset V, \quad X \subset V'.
%$$
%
\[\begin{array}{ccccc}
V & \subset & H & \subset & V' \\
\cup &         &  \cup   &             & \cup \\
D &\subset & W  &\subset & X
\end{array}\]
For the existence of the numerical solution of the fully implicit BDF method we will 
further require that  $D$ is compactly imbedded in $W$.

We have primarily the following two situations in mind:
\begin{enumerate}[(P1)]\itemsep=0pt
\item For a bounded Lipschitz domain $\varOmega\subset\R^d$ (with $d\le 3$) 
we consider the usual 
Hilbert spaces $H=L^2(\varOmega)$ and $V=H^1_0(\varOmega)$, and in addition the 
Banach spaces $X=W^{-1,q}(\varOmega)$ for suitable $q>d$,
$W=C^\alpha(\overline \varOmega)$ with a small $\alpha>0$, and $D=W^{1,q}(\varOmega)\cap H^1_0(\varOmega)$.
\item For a smooth bounded domain $\varOmega\subset\R^d$ (with arbitrary dimension $d$) we consider again 
the  Hilbert spaces $H=L^2(\varOmega)$ and $V=H^1_0(\varOmega)$, and the Banach 
spaces $X=L^q(\varOmega)$ with $q>d$, $W=C^{1,\alpha}(\overline\varOmega)$ 
with a small $\alpha>0$, and $D=W^{2,q}(\varOmega)\cap H^1_0(\varOmega)$.
%$W=W^{1,\infty}(\varOmega)$.
\end{enumerate}

We will work with the following five conditions:%\Blue{assumptions}: %properties:

(i) (\emph{$W$-locally uniform maximal regularity}) For $w\in W$,  the  linear operator 
$-A(w)$ is the generator of an analytic semigroup on $X$ with domain $D(A(w))=D$ 
independent of~$w$, and has maximal $L^p$-regularity: for $1<p<\infty$ there exists 
a real $C_{p}(w)$ such that the solution of the inhomogeneous initial value problem
\begin{equation}
\label{lin-eq-Av}
\dot u(t) + A(w)u(t) = f(t)\qquad (0<t\le T), \qquad u(0)=0 ,
\end{equation}
is bounded by
\[\| \dot u \|_{L^p(0,T;X)} + \| A(w) u \|_{L^p(0,T;X)} \le C_{p}(w) \| f \|_{L^p(0,T;X)}
\qquad \forall f \in {L^p(0,T;X)}.\]
Moreover, the bound is uniform in bounded sets of $W$: for every $R>0$,
\[C_{p}(w) \le C_{p,R} \quad\text{if }\ \|w\|_{W} \le R.\]
We further require that the graph norms $\| \cdot \|_X + \| A(w)\cdot \|_X$ are uniformly 
equivalent to the norm $\|\cdot\|_D$ for $\|w\|_{W} \le R$.

(ii) (\emph{Control of the $W$-norm by maximal regularity})
For some $1<p<\infty$, we have a continuous imbedding 
$W^{1,p}(0,T;X) \cap L^p(0,T;D)\subset L^\infty(0,T;W)$: there is $C_p<\infty$ 
such that  for all $u \in W^{1,p}(0,T;X) \cap L^p(0,T;D)$,
\[\| u \|_{L^\infty(0,T;W)} \le C_p \Bigl( \| \dot u \|_{L^p(0,T;X)} + \| u \|_{L^p(0,T;D)} \Bigr).\]

(iii) (\emph{$V$-ellipticity}) $A(w)$ extends by density to a bounded linear operator 
$A(w):V \to V'$, and for all $w\in W$ with $W$-norm bounded by $R$, the operator 
$A(w)$ is uniformly $V$-elliptic:
\[\alpha_R \| u \|_{V}^2 \le \langle u, A(w) u \rangle \le M_R \| u \|_{V}^2
\qquad \forall u \in V\]
with $\alpha_R>0$ and  $M_R<\infty$, where $\langle\cdot,\cdot\rangle$ denotes the duality 
pairing between $V$ and~$V'$. 
%This condition allows for {\it energy estimates} for \eqref{lin-eq-Av}, obtained by testing 
%with either $u$ or $\dot u$. In the latter case we obtain an $L^\infty(0,T;H^1)$ estimate:
%$$
%\| u \|_{L^\infty(0,T;H^1)} \le \frac 1{\alpha_R} \| f \|_{L^2(0,T;L^2)} \qquad \forall f \in  L^2(0,T;L^2).
%$$

(iv) (\emph{Operators with different arguments: $X$-norm estimate}) 
For every $\eps>0$, there is $C_{\eps,R}<\infty$ such that for all $v,w\in W$ that are 
bounded by $R$ in the $W$-norm, and for all $u\in D$,
\[\| (A(v)-A(w))u \| _{X} \le \Bigl( \eps\, \| v-w \|_{W} + C_{\eps,R}\, \| v-w \|_{H} \Bigr) \|u\|_D.\]

(v) (\emph{Operators with different arguments: $V'$-norm estimate}) For every $\eps>0$, 
there is $C_{\eps,R}<\infty$ such that for all $v,w\in W$ that are bounded by $R$ in the 
$W$-norm, and for all $u\in D$,
\[ \| (A(v)-A(w))u\|_{V'} \le 
\Bigl( \eps \| v-w \|_{V} + C_{\eps,R} \,\| v-w \|_{H} \Bigr)\, \|u \|_{D} .\]
%
% \Blue{
% %
% \[\| (A(v)-A(w))u\|_{V'} \le C_{R}\| v-w \|_{H}  \|u \|_{D} .\]
% %
% }

\begin{lemma} \label{lem:framework}
The operators given by $A(w)u=-\nabla\cdot (a(w) \nabla u)$ with homogeneous 
Dirichlet boundary conditions and a smooth positive function $a(\cdot)$ satisfy 
\emph{(i)--(v)} in the situations \emph{(P1)} and \emph{(P2)} above.
\end{lemma}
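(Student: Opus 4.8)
The plan is to verify the five conditions (i)--(v) separately in each of the two concrete settings (P1) and (P2), relying on standard elliptic regularity theory and Sobolev embeddings. Throughout, the key point is that for $w\in W$ the coefficient $a(w)$ is bounded above and below by positive constants depending only on $\|w\|_W$ (since $a$ is smooth and positive, hence bounded with positive infimum on the compact range of $w$), and moreover $a(w)$ inherits H\"older (respectively $C^1$) regularity from $w$, with norm controlled by $\|w\|_W$. This is what makes all the bounds ``uniform in bounded sets of $W$''.

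First I would treat (iii), which is the easiest: $V$-ellipticity of $A(w)$ on $V=H^1_0(\varOmega)$ is immediate from $\langle u,A(w)u\rangle = \int_\varOmega a(w)|\nabla u|^2\,\d x$ together with the two-sided bound $K_R^{-1}\le a(w)\le K_R$ and the Poincar\'e inequality. For (i) I would invoke the known maximal $L^p$-regularity of second-order elliptic operators in divergence form: on a bounded Lipschitz domain with $C^\alpha$ coefficients one has $A(w)\colon D=W^{1,q}\cap H^1_0\to X=W^{-1,q}$ an isomorphism and $-A(w)$ generates an analytic semigroup on $X$ with maximal regularity (this is the setting used in \cite{KLL}); on a smooth domain with $C^{1,\alpha}$ coefficients one has the stronger $L^q$-theory with $D=W^{2,q}\cap H^1_0$. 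The $w$-independence of the domain $D$ and the uniform equivalence of the graph norm with $\|\cdot\|_D$ on $\{\|w\|_W\le R\}$ follow from the a priori estimates of elliptic regularity, whose constants depend only on the ellipticity bounds and the H\"older/$C^1$ norm of the coefficient, hence only on $R$. Condition (ii) is a pure embedding statement, independent of $A$: in case (P1) it is the embedding $W^{1,p}(0,T;W^{-1,q})\cap L^p(0,T;W^{1,q})\hookrightarrow L^\infty(0,T;C^\alpha)$, obtained by real interpolation (the trace space of this maximal-regularity pair is a Besov space $B^{1-1/p}_{q}$, which embeds into $C^\alpha$ once $2/p+d/q<1$); in case (P2) it is the analogous embedding with one more derivative, using $W^{2,q}\hookrightarrow C^{1,\alpha}$. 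Here the condition $q>d$ and the choice of $p$ from \eqref{InitialError}--\eqref{InitialError2} are exactly what is needed.

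The main work is in (iv) and (v), the two ``different arguments'' estimates, and I expect (iv) to be the principal obstacle because it must be proved in the weakest norm $X=W^{-1,q}$ (in (P1)) where one cannot simply take absolute values. Writing $(A(v)-A(w))u = -\nabla\cdot\big((a(v)-a(w))\nabla u\big)$, the $W^{-1,q}$-norm is bounded by $\|(a(v)-a(w))\nabla u\|_{L^q}\le \|a(v)-a(w)\|_{L^\infty}\|\nabla u\|_{L^q}$, and $\|\nabla u\|_{L^q}\le C\|u\|_D$. It remains to bound $\|a(v)-a(w)\|_{L^\infty}$ by $\eps\|v-w\|_W+C_{\eps,R}\|v-w\|_H$. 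Since $a$ is Lipschitz on the relevant bounded interval, $\|a(v)-a(w)\|_{L^\infty}\le L_R\|v-w\|_{L^\infty}$, so the task reduces to an interpolation-type inequality $\|v-w\|_{L^\infty}\le \eps\|v-w\|_W + C_{\eps,R}\|v-w\|_{H}$, i.e.\ controlling the sup-norm by a small multiple of the $C^\alpha$-norm plus the $L^2$-norm; this follows from the Ehrling-type (compactness) lemma using $W=C^\alpha\hookrightarrow\hookrightarrow C^0$ and $C^0\hookrightarrow L^2$ (or, quantitatively, from a Gagliardo--Nirenberg interpolation between $C^\alpha$ and $L^2$). For (P2) the same scheme applies with $X=L^q$, $W=C^{1,\alpha}$: one uses the product/chain rule $\nabla(a(v)-a(w)) = a'(v)\nabla(v-w) + (a'(v)-a'(w))\nabla w$ and bounds each factor, again reducing to $\|v-w\|_{C^1}\le\eps\|v-w\|_{C^{1,\alpha}}+C_{\eps,R}\|v-w\|_H$ via the same compactness argument. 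Condition (v) is entirely analogous but technically easier, since the target norm $V'=H^{-1}$ is Hilbertian: $\|(A(v)-A(w))u\|_{V'}\le\|(a(v)-a(w))\nabla u\|_{L^2}\le\|a(v)-a(w)\|_{L^\infty}\|\nabla u\|_{L^2}$, and one then uses $\|a(v)-a(w)\|_{L^\infty}\le L_R\|v-w\|_{L^\infty}$ together with the interpolation inequality $\|v-w\|_{L^\infty}\le\eps\|v-w\|_V+C_{\eps,R}\|v-w\|_H$, valid in dimension $d\le3$ by Gagliardo--Nirenberg (and in (P2) one has the extra room coming from $W=C^{1,\alpha}$). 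Assembling these pieces gives the lemma; the only genuinely delicate point is making sure all elliptic-regularity constants in (i) are uniform on $W$-balls, which is standard once one notes they depend on the coefficient only through its ellipticity constants and its H\"older (or $C^1$) seminorm.
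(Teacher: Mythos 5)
Your overall skeleton --- uniform two-sided bounds on $a(w)$ for $\|w\|_W\le R$, the quadratic-form argument for (iii), the real-interpolation/trace embedding with $2/p+d/q<1$ for (ii), and the Ehrling-type compactness lemma to absorb the strong norm in (iv) --- is exactly the paper's route in Sections 8--9. But two points are genuinely defective. The first is your proof of (v) in case (P1): you bound $\|(A(v)-A(w))u\|_{V'}\le\|a(v)-a(w)\|_{L^\infty}\|\nabla u\|_{L^2}$ and then invoke $\|v-w\|_{L^\infty}\le\eps\|v-w\|_{H^1}+C_{\eps,R}\|v-w\|_{L^2}$, claimed ``valid in dimension $d\le3$ by Gagliardo--Nirenberg''. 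No such inequality exists for $d=2,3$: $H^1(\varOmega)$ does not embed into $L^\infty(\varOmega)$, and even on smooth functions one can make $\|f\|_{L^\infty}$ arbitrarily large while keeping $\|f\|_{H^1}$ bounded and $\|f\|_{L^2}$ small (truncated logarithmic peaks), so the sup-norm cannot be controlled by an $\eps$-multiple of the $V$-norm plus the $H$-norm. The paper avoids putting $a(v)-a(w)$ in $L^\infty$: testing with $\varphi\in V$ it uses the three-factor H\"older estimate $\langle \varphi,(A(v)-A(w))u\rangle\le C_R\|\varphi\|_{H^1}\|v-w\|_{L^{\bar q}}\|u\|_{W^{1,q}}$ with $1/\bar q+1/q=1/2$, exploiting $\nabla u\in L^q$, $q>d$; since then $\bar q<2d/(d-2)$, the compact embedding $H^1(\varOmega)\hookrightarrow\hookrightarrow L^{\bar q}(\varOmega)$ and the Ehrling lemma give $\|v-w\|_{L^{\bar q}}\le\eps\|v-w\|_V+C_{\eps,R}\|v-w\|_H$, which is what (v) asks for. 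Your parenthetical appeal to the ``extra room'' from $W=C^{1,\alpha}$ would only yield (v') (an $\eps\|v-w\|_W$ term), not (v), which requires the $V$-norm in the $\eps$-term.

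The second problem is your treatment of (i) in case (P1) as a citation. Maximal $L^p$-regularity on $X=W^{-1,q}(\varOmega)$ for a divergence-form operator with merely $C^\alpha$ coefficient on a bounded Lipschitz domain, with constants uniform in $\|a(w)\|_{C^\alpha}$ and the ellipticity bounds, is not an off-the-shelf result, and it is not ``the setting used in \cite{KLL}'' --- that reference concerns $X=L^q$, and the paper explicitly presents the $W^{-1,q}$ case as the new ingredient. The paper's proof goes through Gaussian kernel bounds and Weis' $R$-boundedness characterization to get the $L^q$ result (constants depending only on $K_0$), and then transfers to $W^{-1,q}$ via boundedness of the Riesz transform $\nabla(-A_q)^{-1/2}$ on $L^q$, which is itself proved in the Appendix through Shen's criterion \cite{Shen05} combined with the Jerison--Kenig $W^{1,q}$ estimate \cite{JK95}. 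That estimate holds only in the restricted range $q_d'<q<q_d$, and the fact that one can take $q>d$ at all uses $q_2>4$, $q_3>3$, hence $d\le3$; your proposal never mentions this range restriction, and the assertion that $A(w):W^{1,q}\cap H^1_0\to W^{-1,q}$ is an isomorphism for every $q>d$ on a Lipschitz domain is false for large $q$. So (i) in (P1), together with the uniformity in $W$-balls, is a substantive missing piece of your argument rather than a routine appeal to elliptic regularity; for (P2) your reliance on the $W^{2,q}$ Schauder--Calder\'on--Zygmund theory with $C^{1,\alpha}$ coefficients is in line with the paper.
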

The proof of this lemma will be given in Section 
\ref{Sec:unif-maxreg} and Section \ref{Sec:Sobolev}.
In the cases (P1) and (P2) we actually have a stronger bound than (v):
\[\| (A(v)-A(w))u\|_{V'} \le C_{R}\,\| v-w \|_{H} \, \|u \|_{D} .\]

\subsection{A perturbation result}

Suppose now that $u\in W^{1,p}(0,T;X)\cap L^p(0,T;D)$ solves
\begin{equation}
\label{ivp-numer}
\left \{
\begin{aligned} 
&\dot u (t) + A(u (t))u (t)=f(t), \qquad 0<t\le T,\\
&u (0)=u_0,
\end{aligned}
\right .
\end{equation}
and $u^\star\in W^{1,\infty}(0,T;W)\cap L^p(0,T;D)$ solves the perturbed equation
%assume that  $u^\star \in  W^{1,p}(0,T;X)\cap L^p(0,T;D)$  satisfies
%
\begin{equation}
\label{ivp2}
\left \{
\begin{aligned} 
&\dot u^\star (t) + A(u^\star (t))u^\star(t)=f(t)+d(t), \quad 0<t\le T,\\
&u^\star(0)=u_0,
\end{aligned}
\right .
\end{equation}
where the defect $d$ is  bounded by
\begin{equation}
\label{est-delta}
\|d\|_{L^p(0,T;X)}\le \delta. %, \qquad \| d \|_{L^2(0,T;L^2)} \le \delta.
\end{equation} 

As an illustration of the combined use of maximal $L^p$-regularity and energy estimates 
we prove the following result. Theorems~\ref{Th:main-1} and \ref{Th:main-2} will be 
proved by transferring these arguments to the time-discrete setting.

\begin{proposition}
\label{prop:basic} In the above setting of {\rm(i)--(v)} and \eqref{ivp-numer}--\eqref{est-delta} 
with $\delta>0$ sufficiently small, 
the error $e=u-u^\star$ between the solutions of  \eqref{ivp-numer} and \eqref{ivp2}
is bounded by
\begin{align*}
\| \dot e \|_{L^p(0,T;X)} + \| e \|_{L^p(0,T;D)} &\le C\delta
\\
\| e \|_{L^\infty(0,T;W)}  &\le C\delta,
\end{align*}
where $C$ depends on 
%the constants in {\rm(i)--(v)} and on 
$\| u^\star \|_{W^{1,\infty}(0,T;W)}$ 
and $\| u^\star \|_{L^p(0,T;D)}$, 
but is independent of $\delta$.
\end{proposition}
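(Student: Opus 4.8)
The plan is to run a fixed-point/continuity argument on the error $e=u-u^\star$. Subtracting \eqref{ivp2} from \eqref{ivp-numer}, I write the error equation in the form $\dot e + A(u)e = \bigl(A(u^\star)-A(u)\bigr)u^\star - d$, and I treat the right-hand side as a perturbation. The key preliminary step is to fix a radius: since $u^\star\in W^{1,\infty}(0,T;W)$, set $R:= 1+2\|u^\star\|_{L^\infty(0,T;W)}$, and work on the (closed, nonempty) set of functions $v$ with $\|v\|_{L^\infty(0,T;W)}\le R$, so that all the uniform-in-$R$ constants from (i), (iii), (iv), (v) are available along the way. A standard bootstrap then shows that if $\delta$ is small enough the error stays in a ball where $\|u\|_{L^\infty(0,T;W)}\le R$, closing the loop.

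First I would get the maximal-regularity estimate. Applying (i) with $w=u^\star(t)$ frozen — more precisely, using a standard freezing/perturbation argument, or directly invoking discrete/continuous maximal regularity for the nonautonomous operator $A(u^\star(\cdot))$, which follows from (i) together with the continuity of $t\mapsto u^\star(t)$ in $W$ — gives
\[
\|\dot e\|_{L^p(0,T;X)}+\|A(u^\star)e\|_{L^p(0,T;X)}
\le C_{p,R}\Bigl(\|(A(u^\star)-A(u))u^\star\|_{L^p(0,T;X)}+\|d\|_{L^p(0,T;X)}\Bigr).
\]
For the first term on the right I use (iv) with $v=u^\star$, $w=u$, $u\rightsquigarrow u^\star(t)$: pointwise in $t$,
\[
\|(A(u^\star)-A(u))u^\star\|_X\le\bigl(\eps\|e\|_W+C_{\eps,R}\|e\|_H\bigr)\|u^\star\|_D .
\]
Since the graph norm of $A(u^\star)$ is equivalent to $\|\cdot\|_D$ (last clause of (i)), the left side controls $\|e\|_{L^p(0,T;D)}$; and $\|e\|_{L^\infty(0,T;W)}$ is controlled by $\|\dot e\|_{L^p(0,T;X)}+\|e\|_{L^p(0,T;D)}$ via (ii). So, choosing $\eps$ small (depending on $\|u^\star\|_{L^p(0,T;D)}$ and the constant in (ii)) to absorb the $\eps\|e\|_W\|u^\star\|_D$ contribution into the left-hand side, I arrive at
\[
\|\dot e\|_{L^p(0,T;X)}+\|e\|_{L^p(0,T;D)}
\le C\Bigl(\|e\|_{L^p(0,T;H)}+\delta\Bigr). \tag{$\ast$}
\]

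The remaining task is to bound $\|e\|_{L^p(0,T;H)}$, and this is where the energy estimate enters — it is also the main obstacle, because maximal regularity alone leaves the low-order term on the right of $(\ast)$. Testing the error equation with $e(t)\in V$ and using the Gelfand-triple structure gives $\tfrac12\frac{\d}{\d t}\|e\|_H^2+\langle e,A(u^\star)e\rangle=\langle(A(u^\star)-A(u))u^\star - d,\,e\rangle$. By (iii) the second term on the left is $\ge\alpha_R\|e\|_V^2$, while for the right-hand side I use (v) (or the stronger $V'$-bound noted after Lemma~\ref{lem:framework}) to get $\|(A(u^\star)-A(u))u^\star\|_{V'}\le(\eps\|e\|_V+C_{\eps,R}\|e\|_H)\|u^\star\|_D$, pair with $\|e\|_V$, and split via Young's inequality: the $\eps\|e\|_V^2\|u^\star\|_D$ term is absorbed into $\alpha_R\|e\|_V^2$ after a further $\eps$-smallness choice (note $\|u^\star\|_D\in L^p\subset L^2$, so this is an $L^1$-in-time coefficient and Gronwall still applies), and the $C_{\eps,R}\|e\|_H\|e\|_V\|u^\star\|_D$ term is bounded by $\tfrac{\alpha_R}{4}\|e\|_V^2+C\|u^\star\|_D^2\|e\|_H^2$. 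The defect term is handled by $\langle -d,e\rangle\le\|d\|_{V'}\|e\|_V\le\|d\|_X\|e\|_V$ (using $X\subset V'$) $\le\tfrac{\alpha_R}{4}\|e\|_V^2+C\|d\|_X^2$. Integrating and applying Gronwall's inequality with the $L^1$-in-time weight $C\|u^\star\|_D^2$ yields $\|e\|_{L^\infty(0,T;H)}^2+\|e\|_{L^2(0,T;V)}^2\le C\|d\|_{L^2(0,T;X)}^2\le C\delta^2$, hence in particular $\|e\|_{L^p(0,T;H)}\le C\delta$ (on the bounded interval $[0,T]$, using $\|e\|_{L^\infty(0,T;H)}$). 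Substituting this into $(\ast)$ gives $\|\dot e\|_{L^p(0,T;X)}+\|e\|_{L^p(0,T;D)}\le C\delta$, and then (ii) gives $\|e\|_{L^\infty(0,T;W)}\le C\delta$. Finally, for $\delta$ small this last bound guarantees $\|u\|_{L^\infty(0,T;W)}\le\|u^\star\|_{L^\infty(0,T;W)}+C\delta\le R$, so the a priori restriction $\|u\|_{L^\infty(0,T;W)}\le R$ used throughout is consistent, closing the argument. The only genuine subtlety is the order in which the $\eps$-parameters are chosen: the $W$-norm term must be absorbed using maximal regularity $(\ast)$, while the $V$-norm term must be absorbed using $V$-ellipticity — the two absorptions are independent, so no circularity arises, but they must be carried out in the right estimates.
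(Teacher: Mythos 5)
Your overall strategy is the paper's: maximal regularity for a linearized operator combined with condition (iv) and absorption via (ii), then an energy estimate using (iii) and (v) with Gronwall, then a smallness bootstrap keeping $\|u\|_{L^\infty(0,T;W)}\le R$. However, the central maximal-regularity step as written is internally inconsistent and leaves a genuine gap. Your error equation is $\dot e + A(u)e = \bigl(A(u^\star)-A(u)\bigr)u^\star - d$, yet the estimate you then invoke is maximal regularity for the family $A(u^\star(\cdot))$, with $\|A(u^\star)e\|_{L^p(0,T;X)}$ on the left. These do not match: with $A(u^\star)$ on the left the error equation is $\dot e + A(u^\star)e = \bigl(A(u^\star)-A(u)\bigr)u - d$, so your right-hand side is missing the term $\bigl(A(u^\star)-A(u)\bigr)e$. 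That term carries (via (iv)) the factor $\|e\|_D$, and controlling it is exactly why the paper's bootstrap tracks not only $\|u\|_{L^\infty(0,t^*;W)}\le R$ but also $\|u\|_{L^p(0,t^*;D)}\le B$ --- a bound absent from your setup. Conversely, if you keep $A(u(\cdot))$ on the left to match your equation, the ``freezing/nonautonomous maximal regularity'' you appeal to is not available as stated: (i) is a statement for a fixed $w$, and your justification (``continuity of $t\mapsto u^\star(t)$ in $W$'') concerns the wrong function --- you would need quantitative time-regularity of $u$ itself in $W$, which is not assumed (inside the bootstrap one only has boundedness). The paper resolves precisely this point by freezing at $u^\star(\bar t)$, using the assumed Lipschitz continuity $u^\star\in W^{1,\infty}(0,T;W)$ together with (iv) to bound the commutator $\bigl(A(u^\star(\bar t))-A(u^\star(t))\bigr)e(t)$ by $C(\bar t-t)\|e(t)\|_D$, and then running a Gronwall inequality in $\eta(t)=\|e\|_{L^p(0,t;D)}^p$; this is the real content of part (b) of the proof and cannot simply be cited as standard.

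The rest of your argument is essentially the paper's and is repairable along its lines: the dropped term has the same structure as the one you keep and is estimated by (iv)/(v) once the $L^p(D)$ bootstrap bound is added, and the coefficient $\|u^\star\|_D$ versus $\|u\|_D$ makes no essential difference. But two further points need tightening. In the energy estimate you again mix the decompositions (testing with $\langle e,A(u^\star)e\rangle$ while the source is $\bigl(A(u^\star)-A(u)\bigr)u^\star$); use the operator that actually appears in your error equation, or the paper's pairing $\bigl(A(u^\star)-A(u)\bigr)u$ with $A(u^\star)$. And the closing step requires the continuation argument (a maximal $t^*$ with the final bounds giving strict inequalities, hence $t^*=T$), not merely the remark that the concluding estimate is ``consistent'' with the a priori restriction --- consistency alone is circular.
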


\begin{proof} (a) (\emph{Error equation})
We rewrite the equation in \eqref{ivp-numer} in the form
\[\dot u (t) + A(u^\star (t))u (t)=\big (A(u^\star (t))-A(u (t))\big )u (t)+f(t)\]
and see that the error $e=u-u^\star$ satisfies the {error equation}
\begin{equation}
\label{er-eq}
\dot e (t) + A(u^\star (t))e (t)=\big (A(u^\star (t))-A(u (t))\big )u (t)-d(t).
\end{equation}
Obviously, $e(0)=0.$
To simplify the notation, we denote  $\bar A(t):=A(u^\star (t)),$ and rewrite the error equation 
with some arbitrary $\bar t\ge t$ as
\begin{equation}
\label{er-eq2}
\dot e (t) + \bar A(\bar t)e(t)=\big ( \bar A(\bar t)-\bar A(t)\big )e(t)
+\big (A(u^\star (t))-A(u (t))\big )u (t)-d(t),
\end{equation}
i.e.,
\begin{equation}
\label{er-eq3}
\dot e (t) + \bar A(\bar t)e(t)=\widehat d(t),
\end{equation}
with
\begin{equation}
\label{er-eq4}
\widehat d(t):=\big ( \bar A(\bar t)-\bar A(t)\big )e(t)-\big (A(u^\star (t))-A(u (t))\big )u (t)-d(t).
\end{equation}
(b) (\emph{Maximal regularity})
We denote 
\[R=\| u^\star \|_{L^\infty(0,T;W)}+1\quad\text{and}\quad
B=  \| u^\star \|_{L^p(0,T;D)}+1 \]
and let $0<t^*\le T$ be maximal such that 
\begin{equation} \label{t-star}
\| u \|_{L^\infty(0,t^*;W)} \le R \quad\text{and}\quad
\| u \| _{L^p(0,t^*;D)} \le B.
\end{equation}
By the maximal $L^p$-regularity (i)
we immediately obtain from \eqref{er-eq3} for $\bar t\le t^*$
\begin{equation}
\label{max-reg}
\|\dot e\|_{L^p(0,\bar t;X)}+\| e\|_{L^p(0,\bar t;D)}\le C_{p,R}\, \|\widehat d\|_{L^p(0,\bar t;X)}.
\end{equation}
By the bound (iv) and the assumed Lipschitz continuity of $u^\star:[0,T]\to W$, we have for 
any $\eps>0$
\begin{align}
\label{d-hat0}
\| \widehat d(t) \|_{X} & \le C (\bar t  - t) \| e(t) \|_{D} 
\\
\nonumber
&+ \eps\, \| e(t) \|_{W} \,  \| u(t) \|_{D} + C_{\eps,R}\, \| e(t) \|_{H} \,  \| u(t) \|_{D}  + \|  d(t) \|_{X} .
\end{align}
We take the second term on the left-hand side of \eqref{max-reg} to power $p$ and denote it by
\[\eta( t) = \| e\|_{L^p(0, t;D)}^p\,.\]
For the first term on the right-hand side of \eqref{d-hat0} we note that, by partial integration,
\[\int_0^{\bar t} (\bar t  - t)^p \,\| e(t) \|_{D}^p\, d t = p \int_0^{\bar t}  (\bar t  - t)^{p-1} \eta(t)\, dt.\]
Hence we have from \eqref{max-reg}
\begin{align*}
\eta(\bar t) \le C_{p,R}^p \| \widehat d \|_{L^p(0,\bar t;X)} ^p &\le 
C \int_0^{\bar t}  (\bar t  - t)^{p-1} \eta(t)\, dt 
\\
&+ C\Bigl( \eps B\| e  \|_{L^\infty(0,\bar t;W)} + C_{\eps,R} B \| e\|_{L^\infty(0,\bar t;H)}  
+ C\|d\|_{L^p(0,\bar t;X)}\Bigr)^p.
\end{align*}
With a Gronwall inequality, we therefore obtain from \eqref{max-reg}
\[\|\dot e\|_{L^p(0,\bar t;X)}+\| e\|_{L^p(0,\bar t;D)}\le C \eps \| e  \|_{L^\infty(0,\bar t;W)} 
+ C_{\eps,R,B}  \| e\|_{L^\infty(0,\bar t;H)}  + C\|d\|_{L^p(0,\bar t;X)},\]
and we note that by property (ii) the left-hand term dominates $\| e  \|_{L^\infty(0,\bar t;W)}$.
For sufficiently small $\eps$ we can therefore absorb the first term of the right-hand side 
in the left-hand side to obtain
\begin{equation}
\label{max-reg-2}
\|\dot e\|_{L^p(0, t^*;X)}+\| e\|_{L^p(0,t^*;D)}\le C \| e \|_{L^\infty(0,t^*;H)}+
C \delta.
\end{equation}

(c) (\emph{Energy estimate}) To bound the first term on the right-hand side of \eqref{max-reg-2} 
we use the energy estimate obtained by testing \eqref{er-eq} with $e$:
\[\frac12 \frac d {dt} \| e(t) \|_H^2 + \langle e(t), A(u^\star(t))e(t) \rangle 
= \bigl\langle e(t), (A(u^\star(t) )-A(u(t) ))u(t)  \bigr\rangle - \langle e(t),d(t) \rangle.\]
The  bound of (v) yields 
\[\bigl\langle e, (A(u^\star )-A(u ))u  \bigr\rangle \le \| e \|_{V} \, \Bigl( \eps \| e \|_V + C_{\eps,R} 
\| e \|_{H} \Bigr) \|u\|_{D}\le
\Bigl( 2\eps \| e \|_{V}^2 + C_{\eps,R}  \| e \|_{H}^2 \Bigr)  \|u\|_{D}.\]
Integrating from $0$ to $t\le t^*$, using the $V$-ellipticity (iii)  and absorbing the term 
with $\| e \|_{V}^2$ we therefore obtain 
\[ \| e(t) \|_H^2 + \int_0^t  \| e(s) \|_V^2\, ds \le C \int_0^t \| e(s) \|_{H}^2\, ds
  + C \int_0^t \| d(s)\|_{V'}^2\, ds,\]
 and the Gronwall inequality then yields
\[\|e(t)\|_{H} \le C\delta, \qquad 0\le t \le t^*.\]

(d) (\emph{Complete time interval}) Inserting the previous bound in \eqref{max-reg-2}, we obtain
\[\|\dot e\|_{L^p(0, t^*;X)}+\| e\|_{L^p(0,t^*;D)}\le C \delta,\]
which by (ii)  further implies
\[\| e\|_{L^\infty(0,t^*;W)} \le C \delta.\]
Hence, for sufficiently small $\delta$ we have strict inequalities
\[\| u \|_{L^\infty(0,t^*;W)} < R \quad\text{and}\quad
\| u \| _{L^p(0,t^*;D)} < B.\]
In view of the maximality of $t^*$ with \eqref{t-star} this is possible only if $t^*=T$.
\end{proof}

\begin{remark}
 If condition (ii) is strengthened to
 \[\| u \|_{C^\alpha([0,T];W)} \le C_{\alpha,p} \Bigl( \| \dot u \|_{L^p(0,T;X)} 
 + \| u \|_{L^p(0,T;D)} \Bigr),\]
with some $\alpha>0$, then the statement of Proposition~\ref{prop:basic} remains valid 
under the weaker condition $u,u^\star\in W^{1,p}(0,T;X)\cap L^p(0,T;D)$, which is 
symmetric in $u$ and~$u^\star$. The proof remains essentially the same.
\end{remark}

\section{Stability estimate for BDF methods}
\label{Sec:stability}

\subsection{Abstract framework for the time discretization}
We work again with the abstract framework (i)--(v) of the previous section and consider 
in addition the following property of the BDF time discretization. Here we denote for a 
sequence $(v_n)_{n=1}^N$ and a given stepsize $\tau$
\[ \big\|(v_n )_{n=1}^N\big\|_{L^p(X)} = \Bigl( \tau \sum_{n=1}^N \| v_n \|_X^p \Bigr)^{1/p},\]
which is the $L^p(0,N\tau;X)$ norm of the piecewise constant function taking the 
values~$v_n$. We will work with the following discrete analog of condition (i).

(i') (\emph{$W$-locally uniform discrete maximal regularity}) For $w\in W$,  the  linear 
operator $-A(w)$  has discrete maximal $ L^p$-regularity for the BDF method: for 
$1<p<\infty$ there exists a real $C_{p}(w)$ such that for every stepsize $\tau>0$ 
the numerical solution determined by
\begin{equation}
\label{lin-eq-Av-bdf}
\dot  u_n + A(w)u_n = f_n\qquad (k\le n\le N) \qquad\text{with }\  
\dot u_n = \frac1\tau \sum_{j=0}^k \delta_j u_{n-j}
\end{equation}
for starting values $u_0=\dotsb=u_{k-1}=0$, is bounded by
    \begin{equation*}
        \big\|(\dot u_n )_{n=k}^N\big\|_{ L^p(X)} + \big\|(A(w) u_n )_{n=k}^N\big\|_{ L^p(X)} 
        \leq C_{p}(w) \big\|(f_n)_{n=k}^N\big\|_{ L^p(X)},
    \end{equation*}
    where the constant is independent of $N$ and $\tau$. Moreover, the bound is uniform 
    in bounded sets of $W$: for every $R>0$,
\[C_{p}(w) \le C_{p,R} \quad\text{if }\ \|w\|_{W} \le R.\]

\begin{lemma}\label{lem:unif-maxreg-bdf}
For the operators given by $A(w)u=-\nabla\cdot (a(w) \nabla u)$ with homogeneous 
Dirichlet boundary conditions and a smooth positive function $a(\cdot)$ and the BDF 
methods up to order  $k\le 6$, the uniform discrete maximal regularity property {\rm(i')}
 is fulfilled in the situations {\rm (P1)} and {\rm (P2)} of the previous section.
\end{lemma}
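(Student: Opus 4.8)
The plan is to reduce the discrete maximal regularity statement (i') to the discrete operator-valued Fourier multiplier theorem of Blunck, following the strategy of Kov\'acs, Li \& Lubich. For a fixed $w\in W$, write $A:=A(w)$; this is a fixed sectorial operator on $X$ (with $X=W^{-1,q}(\varOmega)$ in case (P1) and $X=L^q(\varOmega)$ in case (P2)), and the continuous maximal $L^p$-regularity of $-A$ on $X$, together with the uniformity of the relevant constants for $\|w\|_W\le R$, is exactly what has already been granted by condition (i) of the framework (established in Section \ref{Sec:unif-maxreg} and Section \ref{Sec:Sobolev}). By Weis's characterization, continuous maximal $L^p$-regularity on the UMD space $X=L^q$ or $W^{-1,q}$ is equivalent to the $\mathcal R$-boundedness of the resolvent family $\{\lambda(\lambda+A)^{-1}: \lambda\in \Sigma_\theta\}$ on a sector $\Sigma_\theta$ with $\theta>\pi/2$. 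The key point is that the $\mathcal R$-bound of this family is controlled by the same constants that bound $C_p(w)$, hence is uniform for $\|w\|_W\le R$.

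First I would recall the symbol of the BDF scheme: applying the one-sided $\mathbb Z$-transform (or discrete Fourier analysis on $\ell^p(\mathbb Z;X)$) to \eqref{lin-eq-Av-bdf} turns the discrete evolution equation into the algebraic relation $\big(\tfrac1\tau\delta(e^{-i\theta}) + A\big)\hat u(\theta) = \hat f(\theta)$, so that $\dot u_n$ corresponds to the multiplier
\[
M(\theta) = \frac{\tfrac1\tau\,\delta(e^{-i\theta})}{\tfrac1\tau\,\delta(e^{-i\theta}) + A}
= \mu(\theta)\,\big(\mu(\theta)+A\big)^{-1}, \qquad \mu(\theta):=\tfrac1\tau\,\delta(e^{-i\theta}),
\]
and $A u_n$ corresponds to $A(\mu(\theta)+A)^{-1}=I-M(\theta)$. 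The A($\alpha$)-stability of the BDF method of order $k\le 6$ means $|\arg\delta(\zeta)|\le\pi-\alpha$ for $|\zeta|\le1$, so $\mu(\theta)$ lies in the sector $\Sigma_{\pi-\alpha}$ for $\theta\in(-\pi,\pi)$, with $\alpha$ strictly less than $\pi/2$ only for $k=3,4,5,6$; crucially $\pi-\alpha<\theta_0$ where $\theta_0$ is the sectoriality/$\mathcal R$-boundedness angle from Weis's characterization, because continuous maximal regularity gives $\mathcal R$-boundedness of the resolvent on \emph{some} sector of half-angle exceeding $\pi/2$ — and for self-adjoint positive $A$ on $L^2$, and by the analytic semigroup property with angle close to $\pi/2$ on $L^q$, one in fact gets the resolvent set and $\mathcal R$-boundedness on a sector of half-angle arbitrarily close to $\pi$ away from the origin, which comfortably covers $\Sigma_{\pi-\alpha}$. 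Hence $\{M(\theta):\theta\in(-\pi,\pi)\}$ and $\{I-M(\theta)\}$ are $\mathcal R$-bounded, with $\mathcal R$-bound depending only on $C_{p,R}$ and the BDF coefficients.

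Next I would verify the derivative condition of Blunck's theorem: one needs $\{(e^{i\theta}-1)(e^{i\theta}+1)M'(\theta):\theta\in(-\pi,\pi)\}$ (the discrete analog of the Marcinkiewicz/Mikhlin condition, $\{\theta M'(\theta)\}$ near $\theta=0$ and a matching condition near $\theta=\pm\pi$) to be $\mathcal R$-bounded. Computing, $M'(\theta)= \mu'(\theta)(\mu(\theta)+A)^{-2}A = \mu'(\theta)\mu(\theta)^{-1}M(\theta)(I-M(\theta))$, and since $\mu'(\theta)=\tfrac1\tau(e^{-i\theta})(-i)\delta'(e^{-i\theta})$ while $|\delta'(\zeta)/\delta(\zeta)|\lesssim |1-\zeta|^{-1}$ for $|\zeta|=1$, the scalar factor $(e^{i\theta}-1)(e^{i\theta}+1)\mu'(\theta)\mu(\theta)^{-1}$ is uniformly bounded in $\theta$ (the zero of $\delta$ at $\zeta=1$ is simple, so $1-e^{-i\theta}$ cancels the singularity, and the $\tau$-dependence drops out since $\mu'/\mu$ is scale-invariant). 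What remains is $\mathcal R$-boundedness of products of the already-$\mathcal R$-bounded families $\{M(\theta)\}$ and $\{I-M(\theta)\}$, which follows since products of $\mathcal R$-bounded sets are $\mathcal R$-bounded. Therefore Blunck's theorem applies and yields the discrete maximal $L^p$-regularity estimate on $\ell^p(\mathbb Z;X)$; a standard restriction/extension argument (extending $(f_n)$ by zero, solving on $\mathbb Z$, and using causality of the BDF scheme together with the zero starting values $u_0=\dots=u_{k-1}=0$) transfers this to the finite-time statement with constant independent of $N$ and $\tau$. The uniformity for $\|w\|_W\le R$ is inherited at every step because all $\mathcal R$-bounds were controlled by $C_{p,R}$.

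The main obstacle I anticipate is not any single estimate but the bookkeeping needed to ensure that the $\mathcal R$-boundedness angle supplied by the continuous maximal regularity of $-A(w)$ on $X$ genuinely exceeds $\pi-\alpha$ for the higher-order methods $k=5,6$, where $\alpha$ is as small as $17.84^\circ$ — i.e. one must know $\mathcal R$-boundedness of the resolvent on a sector of half-angle close to $\pi$, not merely slightly more than $\pi/2$. This is where the specific structure of $A(w)=-\nabla\cdot(a(w)\nabla\cdot)$ enters: being self-adjoint positive on $L^2$ it is $\mathcal R$-sectorial of angle $\pi/2^-$ there, and on $L^q$ (or $W^{-1,q}$) one obtains bounded imaginary powers / $H^\infty$-calculus of angle $<\pi/2$ by the classical results on elliptic operators, hence $\mathcal R$-boundedness of $\{\lambda(\lambda+A)^{-1}\}$ on $\Sigma_{\pi-\alpha}$ for any $\alpha>0$ — with bounds uniform in $\|w\|_W\le R$ because the ellipticity and smoothness data of $a(w)$ are uniform there. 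Making this uniform $H^\infty$-calculus statement precise, and hence covering all $k\le 6$, is the crux; once it is in hand, the Fourier-multiplier argument above is routine.
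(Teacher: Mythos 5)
Your overall strategy (reduce to Blunck's discrete operator-valued multiplier theorem via the symbol $M(\theta)=\mu(\theta)(\mu(\theta)+A)^{-1}$, $\mu(\theta)=\tau^{-1}\delta(e^{-i\theta})$, check the Marcinkiewicz-type condition using the simple zero of $\delta$ at $\zeta=1$, then restrict by causality) is exactly the mechanism behind the cited result of Kov\'acs, Li and Lubich, which the paper invokes as a black box for $X=L^q(\varOmega)$. But the proposal has a genuine gap precisely at the point you yourself call the crux, and it is the part the paper identifies as the actual novelty of the lemma. First, the required input is $R$-boundedness of $\{\lambda(\lambda+A(w))^{-1}\}$ on a sector of half-angle exceeding $\pi-\alpha$ (with $\alpha=17.84^\circ$ for $k=6$), \emph{with bounds uniform for} $\|w\|_W\le R$. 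You propose to get this from bounded imaginary powers or an $H^\infty$-calculus of small angle ``by the classical results on elliptic operators,'' but in case (P1) the operator has merely H\"older-continuous coefficients on a bounded Lipschitz domain and acts on $X=W^{-1,q}(\varOmega)$; there is no classical result giving a uniform small-angle $H^\infty$-calculus or $R$-sectoriality in that setting, and asserting uniformity ``because the data are uniform'' is not a proof. The paper obtains the uniform large-angle $R$-sectoriality on $L^q$ quite differently: from Gaussian heat-kernel bounds for the semigroup (Davies), which depend only on the ellipticity constant $K_0$, combined with Kunstmann--Weis Theorem~8.6 to get $R$-boundedness of the analytic semigroup on sectors up to angle $\pi/2$, and then Weis's characterization to get $R$-boundedness of the resolvent on $\varSigma_\theta$ for every $\theta<\pi$, with constants depending only on $K_0$, $\theta$, $q$. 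This kernel-estimate route is what delivers the $W$-local uniformity; it is not recoverable from the continuous maximal-regularity constant $C_{p,R}$ of condition (i) alone, which only yields $R$-sectoriality of angle slightly above $\pi/2$.

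Second, and more importantly, your proposal never really addresses $X=W^{-1,q}(\varOmega)$ on a Lipschitz domain, which is where the paper does new work. Rather than verifying Blunck's hypotheses for $A(w)$ on $W^{-1,q}$, the paper proves the $W^{-1,q}$ discrete maximal regularity by \emph{transference} from the already established $L^q$ result: writing $u_n=\sum_j\tau E_{n-j}f_j$, it conjugates with $(-A_q)^{-1/2}$ and uses the boundedness of the Riesz transform $\nabla(-A_q)^{-1/2}$ on $L^q$ together with its dual $(-A_q)^{-1/2}\nabla\cdot$. That boundedness is itself nontrivial: it holds only in the restricted range $q_d'<q<q_d$ and is proved in the Appendix via Shen's criterion and Jerison--Kenig type $W^{1,q}$ elliptic estimates on Lipschitz domains -- which is exactly why Theorem~\ref{Th:main-1} is restricted to $d=2,3$ (so that $q_d>d$) and to $d<q<q_d$. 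Your argument contains no counterpart of this range restriction, and without it the claimed $R$-sectoriality on $W^{-1,q}$ for all $q$ cannot hold. So the multiplier bookkeeping in your second and third paragraphs is fine as far as it goes, but the two load-bearing ingredients -- kernel-based uniform $R$-sectoriality up to angle $\pi$, and the Riesz-transform transference to $W^{-1,q}$ with its $(q_d',q_d)$ restriction -- are missing.
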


The proof of this lemma will be given in Section~\ref{Sec:MLpBDF}. We note that for a 
fixed $w$, such a result was first proved in \cite{KLL} 
for $X=L^q$. The main novelty of Lemma~\ref{lem:unif-maxreg-bdf} 
is thus the case $X=W^{-1,q}$ and the local $W$-uniformity of the result.

For the BDF methods of orders $k=3,4,5$, we need a further condition that complements (v):

(v') For all $v,w\in W$ that are bounded by $R$ in the $W$-norm, and for all $u\in V$,
\[\| (A(v)-A(w))u \|_{V'} \le C_R\, \| v-w \|_W \, \| u \|_V.\]
This condition is trivially satisfied in the situations {\rm (P1)} and {\rm (P2)}.

\subsection{Stability estimate} \label{subsec:stability}
In the following, let $\hat u_n = u_n$ for the fully implicit BDF method and 
$\hat u_n= \sum_{j=0}^{k-1} \gamma_j u_{n-j-1}$ for the linearly implicit BDF method.

Suppose now that $u_n,u_n^\star\in D$ $(n=0,\dotsc,N)$ solve
\begin{equation}
\label{ivp-numer-bdf}
\dot u _n + A(\hat u_n)u_n=f_n, \qquad k\le n \le N,\\
\end{equation}
and the perturbed equation
%assume that  $u^\star \in  W^{1,p}(0,T;X)\cap L^p(0,T;D)$  satisfies
%
\begin{equation}
\label{ivp2-bdf}
\dot u _n^\star + A(\hat u_n^\star)u_n^\star=f_n+d_n, \qquad k\le n \le N,\\
\end{equation}
respectively, where it is further assumed that 
\begin{equation}\label{u-star-lip}
\| u^\star_m - u^\star_n \|_W \le L (m-n)\tau, \qquad 0 \le n \le m \le N,
\end{equation}
and the defect $(d_n)$ is  bounded by
\begin{equation}
\label{est-delta-bdf}
 \big\|(d_n)_{n=k}^N\big\|_{ L^p(X)} \le \delta %, \qquad \| d \|_{L^2(0,T;L^2)} \le \delta.
\end{equation} 
and the errors of the starting values are bounded by
\begin{equation}
\label{est-err-start}
\frac1\tau\,   \big\|( u_i - u_i^\star)_{i=0}^{k-1}\big\|_{ L^p(X)} \le \delta.
\end{equation}
%Here, the sequences $(u_n)$ and $(u_n^\star)$ are extended by zero for negative subscripts 
%$n$ to define $\dot u_i$ and $\dot u_i^\star$ for $i=0,\dots,k-1$ by \eqref{lin-eq-Av-bdf}.
We then have the following time-discrete version of Proposition~\ref{prop:basic}.

\begin{proposition}
\label{prop:stability} Consider time discretization by a fully implicit or linearly implicit BDF 
method of order $k\le 5$. 
In the above setting of the $W$-locally uniform discrete maximal regularity \emph{(i')} 
and \emph{(ii)--(v)} $($and additionally \emph{(v')} if $k=3,4,5\,)$ and 
\eqref{ivp-numer-bdf}--\eqref{est-err-start}, there exist $\delta_0>0$ and $\tau_0>0$ 
such that for $\delta\le\delta_0$ and $\tau\le \tau_0$, 
the errors $e_n=u_n-u_n^\star$ between the solutions of  \eqref{ivp-numer-bdf} and 
\eqref{ivp2-bdf} are bounded by
\begin{align*}
 \big\|(\dot e_n )_{n=k}^N\big\|_{ L^p(X)} +  \big\|(e_n )_{n=k}^N\big\|_{ L^p(D)}
 &\le C\delta
\\
 \big\|(e_n )_{n=k}^N\big\|_{ L^\infty(W)} &\le C\delta,
\end{align*}
where $C$ depends on $\| (u^\star_n)_{n=0}^N \|_{L^\infty(W)}$ and 
$\| (u^\star_n)_{n=0}^N \|_{L^p(D)}$ and on $L$ of \eqref{u-star-lip}, but is independent 
of $\delta$ and of $N$ and $\tau$ with $N\tau\le T$.
\end{proposition}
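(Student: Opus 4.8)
The plan is to mirror the continuous-time proof of Proposition~\ref{prop:basic} step by step, replacing the continuous maximal $L^p$-regularity by the discrete version \emph{(i')} and the continuous energy estimate by a discrete one based on the Nevanlinna--Odeh multiplier technique. First I would derive the discrete error equation: writing \eqref{ivp-numer-bdf} as $\dot e_n + A(\hat u_n^\star)e_n = (A(\hat u_n^\star)-A(\hat u_n))u_n - d_n$, and then, as in \eqref{er-eq2}, inserting a frozen operator $\bar A(\bar t_m):=A(\hat u_m^\star)$ for some index $m\ge n$, so that the equation reads $\dot e_n + \bar A_m e_n = \widehat d_n$ with $\widehat d_n = (\bar A_m - \bar A_n)e_n + (A(\hat u_n^\star)-A(\hat u_n))u_n - d_n$. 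Using the Lipschitz assumption \eqref{u-star-lip} together with \emph{(iv)}, one bounds $\|\widehat d_n\|_X \le C(m-n)\tau\,\|e_n\|_D + \eps\|\hat e_n\|_W\|u_n\|_D + C_{\eps,R}\|\hat e_n\|_H\|u_n\|_D + \|d_n\|_X$, where $\hat e_n$ is the error in the argument (equal to $e_n$ in the fully implicit case, and a $\gamma$-combination of previous $e$'s in the linearly implicit case, handled by summing over the few terms).

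Next, following part (b) of the proof of Proposition~\ref{prop:basic}, I would introduce a maximal index $n^*$ such that $\|(u_n)_{n=k}^{n^*}\|_{L^\infty(W)}\le R$ and $\|(u_n)_{n=k}^{n^*}\|_{L^p(D)}\le B$, apply the discrete maximal regularity \emph{(i')} on the block $k\le n\le n^*$, handle the nonzero starting errors via \eqref{est-err-start} (the linearity of the scheme lets us split off the contribution of $u_0-u_0^\star,\dots,u_{k-1}-u_{k-1}^\star$, which by \eqref{est-err-start} is $O(\delta)$), set $\eta_n = \tau\sum_{i=k}^n\|e_i\|_D^p$, use the discrete analogue of the partial-integration identity (Abel summation, turning $\tau\sum (m-n)^p\tau^p\|e_n\|_D^p$ into $\sim p\tau\sum(m-n)^{p-1}\tau^{p-1}\eta_n$), and apply a \emph{discrete} Gronwall inequality to absorb the resulting Volterra-type term. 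This gives, after choosing $\eps$ small and using \emph{(ii)} to let the left side dominate $\|(e_n)\|_{L^\infty(W)}$, the bound $\|(\dot e_n)\|_{L^p(0,t_{n^*};X)} + \|(e_n)\|_{L^p(0,t_{n^*};D)} \le C\|(e_n)\|_{L^\infty(0,t_{n^*};H)} + C\delta$, the exact discrete counterpart of \eqref{max-reg-2}.

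Then, for part (c), I would establish the discrete energy estimate by testing \eqref{ivp-numer-bdf} with $e_n - \eta e_{n-1}$ using the Nevanlinna--Odeh multiplier $\eta=\eta_k$ (which exists precisely for $k\le 5$, cf. \cite{NO,AL}): the multiplier makes $\sum_j\delta_j e_{n-j}$ testable against $e_n-\eta e_{n-1}$ produce a telescoping ``$G$-norm'' $\|E_n\|_G^2 - \|E_{n-1}\|_G^2$ plus a nonnegative remainder, where $E_n=(e_n,\dots,e_{n-k+1})$, while $\langle e_n - \eta e_{n-1}, A(\hat u_n^\star)e_n\rangle$ is controlled from below using $V$-ellipticity \emph{(iii)} at level $n$ and $n-1$ (this is where \emph{(v')} enters for $k=3,4,5$, to absorb the cross term $\langle e_{n-1}, A(\hat u_n^\star)e_n\rangle - \langle e_{n-1}, A(\hat u_{n-1}^\star)e_{n-1}\rangle$-type discrepancies coming from the varying operator argument, using \eqref{u-star-lip} to make the $\|v-w\|_W$ factor $O(\tau)$). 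The right-hand terms are handled as in the continuous case via \emph{(v)} and absorption of $\|e_n\|_V^2$, the defect contributes $C\tau\sum\|d_n\|_{V'}^2$, and the starting errors contribute $\|E_{k-1}\|_G^2 = O(\delta^2)$; summing and applying discrete Gronwall yields $\max_{k\le n\le n^*}\|e_n\|_H \le C\delta$.

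Finally, part (d): inserting $\|(e_n)\|_{L^\infty(0,t_{n^*};H)}\le C\delta$ into the discrete version of \eqref{max-reg-2} gives $\|(\dot e_n)\|_{L^p(0,t_{n^*};X)} + \|(e_n)\|_{L^p(0,t_{n^*};D)}\le C\delta$, hence by \emph{(ii)} also $\|(e_n)\|_{L^\infty(0,t_{n^*};W)}\le C\delta$, so for $\delta$ (and $\tau$) small enough the defining inequalities for $n^*$ hold strictly, forcing $n^*=N$ by maximality, which closes the bootstrap. The main obstacle I expect is the discrete energy estimate in part (c): unlike the clean continuous identity $\frac12\frac{d}{dt}\|e\|_H^2 + \langle e, A e\rangle = \dots$, the multiplier technique requires careful bookkeeping of the $G$-norm telescoping, the sign of the quadratic remainder term, and — crucially here, since the paper does not assume global ellipticity — the fact that the operator argument $\hat u_n^\star$ changes between consecutive steps, which must be absorbed using \eqref{u-star-lip} and condition \emph{(v')}; ensuring all cross terms of this ``operator-variation'' type close without a smallness assumption beyond $\tau\le\tau_0$ is the delicate point. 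A secondary technical nuisance is treating the nonzero starting errors \eqref{est-err-start} consistently in both the maximal-regularity block and the energy recursion, and, for the linearly implicit method, controlling $\hat u_n$ versus $u_n$ uniformly (which needs $\|(u_n)\|_{L^\infty(W)}\le R$ already built into the definition of $n^*$).
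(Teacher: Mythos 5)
Your outline follows essentially the same route as the paper's proof (frozen-coefficient error equation, discrete maximal regularity plus discrete Gronwall, Nevanlinna--Odeh/Dahlquist energy estimate with the $G$-norm, then a bootstrap over the maximal index), and parts (a)--(c) are described correctly, including the role of \eqref{u-star-lip} and (v') in controlling the variation of the operator argument between steps. However, there is a genuine gap in your part (d): the continuation-by-maximality argument closes only for the \emph{linearly implicit} method. There, $\hat u_{n^*+1}$ is built from $u_{n^*},\dots,u_{n^*-k+1}$, which are already controlled, so the estimates of (a)--(c) extend to the step $n^*+1$ and contradict maximality. For the \emph{fully implicit} method, $\hat u_{n}=u_n$, so condition (iv) at the new step requires the bound $\|u_{n^*+1}\|_W\le R$ (indeed already $\|u_{M}\|_W\le R$ with the paper's indexing \eqref{M-RB}) --- which is precisely what one is trying to prove; since there is no discrete-time continuity to exploit, the argument is circular and the bootstrap does not close. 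The paper explicitly flags this and completes the fully implicit case in Section~\ref{Sec:existence}: existence of $u_M$ together with the needed a priori bound $\|u_M\|_W\le R$ is obtained via Schaefer's fixed point theorem applied to a modified equation with the scaling factor $\rho_\phi$ of \eqref{rho_phi}, which keeps the nonlinear perturbation in a $\sqrt{\delta}$-ball so that the stability estimates of parts (a)--(c) apply, and one then shows a posteriori that $\rho_\phi=1$. Without this (or some substitute a priori estimate), your proof covers only the linearly implicit scheme, whereas the proposition claims both.

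A secondary point: in part (b) you invoke (ii) directly to conclude that the discrete maximal-regularity norms dominate $\|(e_n)\|_{L^\infty(W)}$, but (ii) is a continuous-time imbedding. One must pass to the piecewise linear interpolant of the $e_n$ and control its time derivative $(e_n-e_{n-1})/\tau$ in $L^p(X)$ by the BDF differences $\dot e_n$; this uses the factorization $\delta(\zeta)=(1-\zeta)\mu(\zeta)$ with $\mu$ zero-free in the closed unit disk, so that $(e_n-e_{n-1})/\tau$ is a discrete convolution of the $\dot e_j$ with an exponentially decaying kernel. This is a standard but necessary step that your sketch omits.
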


This stability result will be proved in this and the next section.

\subsection{Auxiliary results by Dahlquist and Nevanlinna \& Odeh}
We will prove Proposition \ref{prop:stability} for the linearly 
implicit scheme similarly to the proof of Proposition~\ref{prop:basic}. To be able to 
use energy estimates in the time-discrete setting of BDF methods, we need the
 following auxiliary results.% by Dahlquist \cite{D} and Nevanlinna \& Odeh~\cite{NO}.

\begin{lemma}{\upshape (Dahlquist \cite{D}; see also \cite{BC} and \cite[Section V.6]{HW})}
\label{Le:Dahl}
Let $\delta(\zeta) =\delta_k\zeta^k+\dotsb+\delta_0$ and 
$\mu(\zeta)=  \mu_k\zeta^k+\dotsb+\mu_0$ be polynomials of degree at 
most $k\ ($and at least one of them of degree $k)$
that have no common divisor. 
Let $(\cdot,\cdot)$ be an inner product with associated norm $|\cdot|.$
If
\[\Real \frac {\delta(\zeta)}{\mu(\zeta)}>0\quad\text{for }\, |\zeta|<1,\]
then there exists a positive definite symmetric matrix $G=(g_{ij})\in \R^{k\times k}$ 
and real $\kappa_0,\dotsc,\kappa_k$ such that for $v_0,\dotsc,v_k$ in 
the real inner product space,
\begin{equation*}
%\Real 
\Big (\sum_{i=0}^k\delta_iv_{k-i},\sum_{j=0}^k\mu_jv_{k-j}\Big )=
\sum_{i,j=1}^kg_{ij}(v_{i},v_{j})
-\sum_{i,j=1}^kg_{ij}(v_{i-1},v_{j-1})
+\Big |\sum_{i=0}^k\kappa_iv_{i}\Big |^2.
\end{equation*}
\end{lemma}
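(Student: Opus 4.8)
The plan is to prove Dahlquist's $G$-stability characterization: the existence of a positive definite symmetric $G$ and real coefficients $\kappa_i$ such that the quadratic identity holds. I would first reduce the statement to a purely algebraic one about polynomials and Toeplitz-type quadratic forms. Writing $v_i$ as scalars first (the general inner-product case follows by polarization / by applying the scalar identity coordinatewise in an orthonormal basis, or by tensoring with the inner product space), the claim becomes: if $\Real\bigl(\delta(\zeta)/\mu(\zeta)\bigr)>0$ for $|\zeta|<1$, then the bilinear form $(v_0,\dots,v_k)\mapsto \bigl(\sum_i\delta_i v_{k-i}\bigr)\bigl(\sum_j\mu_j v_{k-j}\bigr)$ can be written as $V_1^{\!\top}GV_1 - V_0^{\!\top}GV_0 + |\sum_i\kappa_i v_i|^2$, where $V_1=(v_1,\dots,v_k)$ and $V_0=(v_0,\dots,v_{k-1})$.

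First I would rephrase the positivity hypothesis analytically. The rational function $\delta(\zeta)/\mu(\zeta)$ has positive real part on the open unit disk, so by the Riesz--Herglotz / positive-real-function theory it admits a representation whose boundary values on $|\zeta|=1$ have nonnegative real part; concretely, for $\zeta=e^{i\theta}$ one gets $\Real\bigl(\delta(e^{i\theta})\overline{\mu(e^{i\theta})}\bigr)\ge 0$, i.e. the trigonometric polynomial $\tfrac12\bigl(\delta(\zeta)\mu(1/\zeta)+\delta(1/\zeta)\mu(\zeta)\bigr)$ is nonnegative on the unit circle (using $\overline{\mu(e^{i\theta})}=\mu(e^{-i\theta})$ for real coefficients). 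By the Fejér--Riesz theorem, a nonnegative trigonometric polynomial of degree $k$ is the squared modulus $|\kappa(\zeta)|^2$ of an algebraic polynomial $\kappa(\zeta)=\sum_{i=0}^k\kappa_i\zeta^i$ of degree $\le k$; since the original data are real and the form is symmetric, $\kappa$ can be taken with real coefficients. This produces the $\kappa_i$ and identifies the ``squared'' term.

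Next I would extract $G$. After subtracting $|\sum_i\kappa_i v_i|^2$, the remaining bilinear form $B(v,v) := \bigl(\sum\delta_i v_{k-i}\bigr)\bigl(\sum\mu_j v_{k-j}\bigr) - \bigl(\sum\kappa_i v_i\bigr)^2$ has, by construction, the telescoping structure: its symbol vanishes to the right order so that $B$ depends on $(v_0,\dots,v_k)$ only through the ``difference'' $V_1^{\!\top}GV_1-V_0^{\!\top}GV_0$ for a suitable symmetric $G\in\R^{k\times k}$. Concretely one reads off $G$ from the coefficients of $\delta,\mu,\kappa$ by matching the generating-function identity $\delta(\zeta)\mu(\omega) - \kappa(\zeta)\kappa(\omega) = (1-\zeta\omega)\,\zeta\omega\,g(\zeta,\omega) + (\text{lower contributions})$, where $g(\zeta,\omega)=\sum_{i,j}g_{ij}\zeta^{i-1}\omega^{j-1}$; equivalently, $G$ solves a discrete Lyapunov-type equation built from the companion matrix of $\delta$. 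Positive definiteness of $G$ is exactly where the strict inequality $\Real(\delta/\mu)>0$ on the \emph{open} disk (as opposed to mere nonnegativity on the boundary) and the no-common-divisor hypothesis are used: strictness forces the Fejér--Riesz factor $\kappa$ to be such that the residual quadratic form is strictly positive, i.e. $G>0$. I expect this last point — proving $G\succ 0$ rather than just $G\succeq 0$, and handling the degenerate boundary behavior correctly — to be the main obstacle; the cleanest route is probably to show that $V^{\!\top}GV>0$ for $V\ne 0$ by testing with sequences $v_n$ generated by $\delta(\zeta)$ applied to geometric-type data and invoking the strict positivity, or by a compactness/continuity argument in the coefficients.

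Since this is a classical result of Dahlquist with a known proof, in the paper itself I would simply cite \cite{D} (and \cite{BC}, \cite[Section V.6]{HW}) rather than reproduce the argument; the sketch above is the route one would take if a self-contained proof were wanted.
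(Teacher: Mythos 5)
Your proposal matches the paper exactly in its essential decision: the paper offers no proof of this lemma, citing Dahlquist \cite{D}, Baiocchi--Crouzeix \cite{BC} and \cite[Section V.6]{HW} instead, which is precisely what you say you would do. Your sketch (boundary nonnegativity of $\Real\bigl(\delta\overline{\mu}\bigr)$, Fej\'er--Riesz factorization producing the $\kappa_i$, divisibility by $1-\zeta\omega$ to read off a symmetric $G$, with strict positivity and coprimality entering only for $G\succ0$) is a faithful outline of the classical argument in those references, so no gap needs to be filled here.
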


In combination with the preceding result for the multiplier $\mu(\zeta)=1-\theta_k\zeta,$ 
the following property of BDF methods up to order $5$ becomes important.

\begin{lemma}{\upshape (Nevanlinna \& Odeh \cite{NO})}\label{Le:NO}
For $k\le 5,$  there exists %a multiplier $\theta_k$ of the $k$-step BDF method, i.e., a constant
$0\le \theta_k<1$ such that 
%the generating polynomial of the $k$th order BDF method,
for
$\delta (\zeta)= \sum_{ \ell=1}^k \frac 1 \ell  (1-\zeta)^ \ell$, %satisfies
\[\Real \frac {\delta(\zeta)}{1-\theta_k\zeta}>0\quad\text{for }\, |\zeta|<1.\]
The smallest possible values of $\theta_k$ are
\[\theta_1=\theta_2=0,\ \theta_3=0.0836,\ \theta_4=0.2878,\ \theta_5=0.8160.%\eqno{\qed}
\]
\end{lemma}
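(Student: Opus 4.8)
The plan is to reduce the claim, via the minimum principle for harmonic functions, to an elementary polynomial inequality on $[-1,1]$, and then to settle $k\le 2$ by hand and $k=3,4,5$ by a finite computation. First I would note that, since $0\le\theta_k<1$, the rational function $g(\zeta):=\delta(\zeta)/(1-\theta_k\zeta)$ has its only pole at $\zeta=1/\theta_k$ with $|1/\theta_k|>1$ (no pole at all when $\theta_k=0$), hence is holomorphic on a neighbourhood of the closed unit disc $\overline D$. Its real part is then harmonic there and, being the real part of a non-constant holomorphic function, is non-constant; by the strong minimum principle it cannot attain its minimum over $\overline D$ at an interior point, so it suffices to prove $\Real g(e^{\i\varphi})\ge0$ for all real $\varphi$ --- this already forces the strict inequality $\Real g>0$ on the open disc. (The value $0$ is attained on $\partial D$, at $\zeta=1$ where $\delta(1)=0$, which is compatible with strict positivity inside.)

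Next I would make this boundary inequality explicit. Multiplying through by $|1-\theta_k e^{\i\varphi}|^2=1-2\theta_k\cos\varphi+\theta_k^2\ge(1-\theta_k)^2>0$, the claim becomes
\[
  \Real\!\big(\delta(e^{\i\varphi})\,(1-\theta_k e^{-\i\varphi})\big)\ge0,\qquad\varphi\in\R .
\]
Since $\delta$ has real coefficients, both $\Real\,\delta(e^{\i\varphi})$ and $\Real\!\big(e^{-\i\varphi}\delta(e^{\i\varphi})\big)$ are finite cosine series in $\varphi$, hence polynomials in $c:=\cos\varphi$ (through Chebyshev polynomials) of degrees $k$ and $k-1$, and both vanish at $\varphi=0$ because $\delta(1)=0$, so each is divisible by $1-c$. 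Thus the inequality takes the form $(1-c)\,Q_k(c;\theta_k)\ge0$ on $[-1,1]$ with an explicit polynomial $Q_k$ of degree $k-1$ depending affinely on $\theta_k$, and the lemma is equivalent to the statement that $Q_k(\,\cdot\,;\theta_k)\ge0$ on $[-1,1]$ for a suitable $\theta_k\in[0,1)$. (Concretely, $1-e^{\i\varphi}=-2\i\sin(\varphi/2)\,e^{\i\varphi/2}$ gives $\delta(e^{\i\varphi})=\sum_{\ell=1}^k\frac{(-2\i)^\ell}{\ell}\sin^\ell(\varphi/2)\,e^{\i\ell\varphi/2}$, from which the coefficients of $Q_k$ are read off.)

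Finally I would treat the individual orders. For $k=1,2$ one may take $\theta_k=0$: indeed $\Real\delta(e^{\i\varphi})=1-\cos\varphi$ for $k=1$ and $(1-\cos\varphi)^2$ for $k=2$, so here the statement is just the A-stability of the first- and second-order BDF methods. For $k=3,4,5$ the polynomial $Q_k(\,\cdot\,;0)$ is negative somewhere on $[-1,1]$ --- these methods are only A$(\alpha)$-stable with $\alpha<90^\circ$, so $\Real\delta<0$ somewhere on $|\zeta|\le1$ --- and one fixes $\theta_k$ as the smallest value for which the minimum of $Q_k(\,\cdot\,;\theta)$ over $[-1,1]$ reaches $0$, i.e.\ from a tangency condition $Q_k(c_0)=Q_k'(c_0)=0$ in the interior (or a minimum at $c_0=\pm1$). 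This yields $\theta_3=0.0836$, $\theta_4=0.2878$, $\theta_5=0.8160$, and for each such $\theta_k$ the nonnegativity of $Q_k$ on $[-1,1]$ is certified by a finite computation --- a Sturm-sequence root count, a discriminant evaluation, or an explicit representation $Q_k(c)=\sigma_0(c)+(1-c)\,\sigma_1(c)+(1+c)\,\sigma_2(c)$ with each $\sigma_i$ a sum of squares.

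The main obstacle is precisely this last step for $k=3,4,5$: pinning down the threshold values $\theta_k$ and rigorously verifying nonnegativity of the resulting polynomials (of degree $k-1\le4$) on $[-1,1]$. Everything before it is soft --- the minimum principle plus routine algebraic bookkeeping --- and $k\le2$ is classical; but the $k=3,4,5$ verification is a finite computation best entrusted to computer algebra, and the sharp jump to $\theta_5\approx0.816$ quantifies how marginal the fifth-order method is. The analogous property fails for $k\ge6$, which is why the analysis of the present paper is confined to order $5$.
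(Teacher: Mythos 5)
The paper does not prove this lemma at all: it is quoted from Nevanlinna \& Odeh \cite{NO}, with the precise optimal multipliers computed in Akrivis \& Katsoprinakis \cite{AK}. So the only fair comparison is with that literature, and your reduction is in fact the standard route taken there: pass to the boundary (your minimum-principle step is correct, since $\delta(\zeta)/(1-\theta_k\zeta)$ is holomorphic on a neighbourhood of the closed disc and non-constant, so nonnegativity of the real part on $|\zeta|=1$ forces strict positivity on $|\zeta|<1$), clear the positive denominator $|1-\theta_k e^{\mathrm{i}\varphi}|^2$, and observe that $\Real\bigl(\delta(e^{\mathrm{i}\varphi})(1-\theta_k e^{-\mathrm{i}\varphi})\bigr)$ is a cosine polynomial vanishing at $\varphi=0$, hence of the form $(1-c)Q_k(c;\theta_k)$ with $c=\cos\varphi$ and $Q_k$ affine in $\theta_k$. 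Your treatment of $k=1,2$ is complete and correct ($1-c$ and $(1-c)^2$), and it is a useful extra observation (which you could add) that $\theta\mapsto\min_{c\in[-1,1]}Q_k(c;\theta)$ is concave, so the admissible $\theta$ form an interval and ``smallest possible'' is a well-defined threshold.

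The genuine gap is exactly where you locate it, but it should be stated as a gap rather than an afterthought: for $k=3,4,5$ nothing is actually verified. You neither write down $Q_3,Q_4,Q_5$, nor carry out the tangency computation, nor certify nonnegativity of $Q_k(\cdot;\theta_k)$ on $[-1,1]$, nor prove the minimality direction (that no smaller $\theta$ works); so the quantitative content of the lemma --- the existence of $\theta_k<1$ for $k=3,4,5$ and the listed values --- is not established by the proposal as written. Two further points of care if you were to complete it: the displayed numbers $0.0836,\,0.2878,\,0.8160$ are rounded values of algebraic numbers (the exact optima are in \cite{AK}), so a verification must either work with the exact thresholds or check that the rounded value lies on the admissible side; and the claim that $Q_k(\cdot;0)$ is negative somewhere does not quite follow from A$(\alpha)$-stability with $\alpha<90^\circ$ alone as stated --- $\Real\delta(e^{\mathrm{i}\varphi})<0$ somewhere is what is needed and is true for $k\ge 3$, but it deserves its own (easy) check rather than an appeal to the angle. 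None of this affects the soft part of your argument, which is sound and is indeed how the cited proofs proceed.
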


Precise expressions for the optimal multipliers for the BDF methods of orders 3, 4, and 5 
are given by Akrivis \& Katsoprinakis \cite{AK}.

An immediate consequence of Lemma \ref{Le:NO} and Lemma \ref{Le:Dahl} is the
relation
\begin{equation}
\label{multiplier}
\Big (\sum_{i=0}^k\delta_iv_{k-i},v_k-\theta_k v_{k-1}\Big )\ge
\sum_{i,j=1}^kg_{ij}(v_{i},v_{j})
-\sum_{i,j=1}^kg_{ij}(v_{i-1},v_{j-1})
%+\Big |\sum_{i=0}^k\kappa_iv_{i}\Big |^2
\end{equation}
with a positive definite symmetric matrix $G=(g_{ij})\in \R^{k\times k}$;  
it is this inequality  that will play a crucial role in our energy estimates. \medskip

\subsection{Proof of Proposition~\ref{prop:stability} for the linearly implicit BDF methods}
\label{subsec:proof-stability}
We subdivide the proof into four parts (a) to (d) that are analogous to the corresponding 
parts in the proof of Proposition~\ref{prop:basic}. Parts (a)--(c) apply to both the linearly 
and fully implicit BDF methods, whereas the argument in part (d) does not work for the 
fully implicit method.

(a) (\emph{Error equation})
We rewrite the equation for $u_n$ in the form
\[
\dot u_n + A(\hat u^\star _n)u _n=\big (A(\hat u^\star_n)-A(\hat u _n)\big )u _n+f_n
\]
and see that the errors $e_n=u_n-u^\star_n$ satisfy the error equation, for $n\le N$,
\begin{equation}
\label{er-eq-bdf}
\dot e_n + A(\hat u^\star _n)e_n=\big (A(\hat u^\star_n)-A(\hat u_n)\big )u _n-d_n.
\end{equation}
We abbreviate $A_n=A(\hat u^\star _n)$.
For an arbitrary $m \le N$ and for $k\le n\le m$ we further have the error equation 
with a fixed operator
\begin{equation}
\label{er-eq3-bdf}
\dot e _n + A_me _n=\widehat d_n:=(A_m-A_n)e_n
+\big (A(\hat u^\star _n)-A(\hat u _n)\big )u _n-d_n.
\end{equation}
If we redefine $e_0=\dotsb=e_{k-1}=0$, then there appear extra defects for 
$n=k,\dotsc,{2k-1}$, which are bounded by $\delta$ by condition \eqref{est-err-start} 
and are subsumed in $d_n$ in the following.

(b) (\emph{Maximal regularity})
We denote 
\begin{equation}\label{RB-bounds}
 R=\| (u^\star_n)_{n=0}^N \|_{L^\infty(W)}+1\quad\text{and}\quad
B=  \| (u^\star_n)_{n=0}^N \|_{L^p(D)}+1,
\end{equation}
and let $M\le N$ be maximal such that 
\begin{equation} \label{M-RB}
\| (u_n)_{n=0}^{M-1} \|_{L^\infty(W)} \le R \quad\text{and}\quad
\| (u_n)_{n=0}^{M-1} \|_{L^p(D)} \le B.
\end{equation}
By the discrete maximal $L^p$-regularity (i')
we obtain from \eqref{er-eq3-bdf} 
\begin{equation}
\label{max-reg-bdf}
 \big\|(\dot e_n )_{n=k}^m\big\|_{ L^p(X)} +  \big\|(e_n )_{n=k}^m\big\|_{ L^p(D)} \le 
 C_{p,R}\,  \big\|(\widehat d_n )_{n=k}^m\big\|_{ L^p(X)} ,
% \quad\mbox{for}\,\,\, 
% k\leq m\leq M .
\end{equation}
and by the bounds (iv) and \eqref{u-star-lip}, we have, for any $\eps>0$,
\begin{align}
\label{d-hat}
\| \widehat d_n \|_{X} & \le 
\|  d_n \|_{X} + C_R L (m  - n)\tau \| e_n \|_{D} 
\\
\nonumber
&\quad +\ \eps\, \| e_n \|_{W} \,  \| u_n \|_{D} + C_{\eps,R}\, \| e_n \|_{H} \,  \| u_n \|_{D}  
\quad\text{for}\,\,\, 
 k\leq n\leq m\leq M .
\end{align}
We take the second term on the left-hand side of \eqref{max-reg-bdf} to power $p$ and denote it by
\[\eta_m = \big\|(e_n )_{n=k}^m\big\|_{ L^p(D)}^p.\]
For the second term on the right-hand side of \eqref{d-hat} we note that, by partial summation,
\begin{align*}
\tau\sum_{n=k}^m (m-n)^p\tau^p \,\| e_n \|_{D}^p 
&= \tau\sum_{n=k}^{m-1} \bigl((m-n)^p-(m-n-1)^p)\tau^{p-1}\,\eta_n 
\\
& \le C \tau\sum_{n=k}^m \bigl((m-n)\tau\bigr)^{p-1} \eta_n.
\end{align*}
Hence we have from \eqref{max-reg-bdf}
\begin{align*}
\eta_m \le  & \ C_{p,R}^p\,  \big\|(\widehat d_n )_{n=k}^m\big\|_{ L^p(X)}^p  
\le C \tau\sum_{n=k}^m \bigl((m-n)\tau\bigr)^{p-1} \eta_n
\\
&+ C\Bigl( \eps B\| (e_n )_{n=k}^m  \|_{L^\infty(W)} + CB \| (e_n )_{n=k}^m\|_{L^\infty(H)}  
+ C \big\|(d_n )_{n=k}^m\big\|_{ L^p(X)} \Bigr)^p.
\end{align*}
With a discrete Gronwall inequality, we therefore obtain from \eqref{max-reg-bdf}
\begin{align}\label{max-reg-aux}
 \big\|(\dot e_n )_{n=k}^M\big\|_{ L^p(X)} +  \big\|(e_n )_{n=k}^M\big\|_{ L^p(D)} \le &\  
 C \eps \| (e_n )_{n=k}^M  \|_{L^\infty(W)} 
 \\ 
 \nonumber
 &\hspace{-2cm}\  + C \| (e_n )_{n=k}^M\|_{L^\infty(H)}  
  + C \big\|(d_n )_{n=k}^M\big\|_{ L^p(X)}.
\end{align}
Next we show that by property (ii) the left-hand side dominates $\| (e_n )_{n=k}^M  \|_{L^\infty(W)}$.
We write $\delta(\zeta)=(1-\zeta)\mu(\zeta)$, where the polynomial $\mu(\zeta)$ of degree 
$k-1$ has no zeros in the closed unit disk, and therefore
\[\frac1{\mu(\zeta)} = \sum_{n=0}^\infty \chi_n \,\zeta^n, \qquad\text{where} \quad
|\chi_n|\le \rho^n \ \text{ with } \rho<1.\]
It follows that
\[\frac{e_n-e_{n-1}}\tau = \sum_{j=0}^n \dot e_{n-j} \, \chi_j\]
and
\[ \Big\|\Bigl(\frac{e_n-e_{n-1}}\tau \Bigr)_{n=k}^M\Big\|_{ L^p(X)} 
\le C  \big\|(\dot e_n )_{n=k}^M\big\|_{ L^p(X)}.\]
If we denote by $e(t)$ the piecewise linear function that interpolates the values $e_n$, 
then we have
\[\| \dot e \|_{L^p(0,M\tau;X)} =  \Big\|\Bigl(\frac{e_n-e_{n-1}}\tau \Bigr)_{n=k}^M \Big\|_{ L^p(X)} 
\quad\text{and}\quad \| e \|_{L^p(0,M\tau;D)} \le C  \big\|(e_n )_{n=k}^M\big\|_{ L^p(D)}.\]
Combining the above inequalities and using property (ii) for $e(t)$, we thus obtain
\[\| (e_n )_{n=k}^M  \|_{L^\infty(W)} \le C \bigl(
 \big\|(\dot e_n )_{n=k}^M\big\|_{ L^p(X)} +  \big\|(e_n )_{n=k}^M\big\|_{ L^p(D)} \bigr).\]
For sufficiently small $\eps$ we can therefore absorb the first term of the right-hand 
side of \eqref{max-reg-aux}  in the left-hand side to obtain
\begin{equation}
\label{max-reg-2-bdf}
 \big\|(\dot e_n )_{n=k}^M\big\|_{ L^p(X)} +  \big\|(e_n )_{n=k}^M\big\|_{ L^p(D)} 
 \le C \| (e_n )_{n=k}^M\|_{L^\infty(H)}  + C \delta.
\end{equation}

(c) (\emph{Energy estimate}) To bound the first term on the right-hand side of \eqref{max-reg-2-bdf} 
we use the energy estimate obtained by testing \eqref{er-eq-bdf} with $e_n-\theta_k e_{n-1}$ 
with $\theta_k$ from Lemma~\ref{Le:NO}: 
%with the abbreviations $A_n=A(u_n)$ and $A_n^\star=A(u_n^\star)$,
\begin{equation}\label{BDF6}
\frac 1\tau\big (e_n-\theta_ke_{n-1},\sum_{j=0}^k\delta_je_{n-j}\big ) 
+ \langle e_n,A_n e_n \rangle - \theta_k\langle e_{n-1},A_n e_n \rangle = 
\langle e_n-\theta_k e_{n-1}, \widetilde d_n \rangle
%\\
%&\qquad\qquad =-\langle e_n-\theta_k e_{n-1},\big (A(u_n^\star)-A(u_n)\big )u _n\rangle -
%\langle e_n-\theta_k e_{n-1},d_n \rangle.
\end{equation}
with 
\[\widetilde d_n = \big (A(\hat u_n^\star)-A(\hat u_n)\big )u _n -d_n .\]
Now, with the notation $E_n:=(e_n,\dotsc,e_{n-k+1})^T$ 
and the norm $|E_n|_G$ given by
\[|E_n|_G^2=\sum_{i,j=1}^k g_{ij}(e_{n-k+i},e_{n-k+j}),\]
using \eqref{multiplier}, we can estimate the first term on the left-hand
side from below in the form
\begin{equation}
\label{BDF3}
\big (e_n-\theta_ke_{n-1},\sum_{j=0}^k\delta_je_{n-j}\big )
\ge |E_n|_G^2-|E_{n-1}|_G^2.
\end{equation}
In the following %we abbreviate $A_n=A(\hat u_n^\star)$ and 
we denote by $\|\cdot\|_n$ the norm given by $\| v \|_n^2 
= \langle v,A_n v\rangle$, which by (iii) is equivalent to the $V$-norm. Furthermore, %obviously,
\[
\langle e_n-\theta_ke_{n-1},A_n e_n\rangle =\|e_n\|_n^2-\theta_k\langle e_{n-1},A_ne_n \rangle,
\]
whence, obviously,
\begin{equation}
\label{BDF4}
\langle e_n-\theta_ke_{n-1},A_ne_n \rangle \ge  \big (1-\frac {\theta_k}2\big )\|e_n\|_n^2
-\frac {\theta_k}2\|e_{n-1}\|_n^2.
\end{equation}
Moreover,
\begin{equation}
\label{BDF5}
\langle e_n-\theta_ke_{n-1},\widetilde  d_n \rangle \le \varepsilon (\|e_n\|_n^2
+\theta_k^2 \|e_{n-1}\|_{n-1}^2)
+C_\varepsilon \|\widetilde d_n\|_{V'}^2,
\end{equation}
for any positive $\varepsilon$.
At this point, we need to relate $\|e_{n-1}\|_n$ back to $\|e_{n-1}\|_{n-1}.$
We have, by the bounds (v') and  \eqref{u-star-lip},
\[\|v\|_n^2-\|v\|_{n-1}^2=\langle v,A_n v\rangle -\langle v,A_{n-1} v\rangle
=\langle v,(A_n-A_{n-1})v\rangle \le C\tau \| v \|_V^2,\]
so that
\begin{equation}
\label{BDF6n}
\|e_{n-1}\|_n^2\le (1+C\tau) \|e_{n-1}\|_{n-1}^2.
\end{equation}
Summing in \eqref{BDF6} from $n=k$ to $m\le M,$ we obtain
\[|E_m|_G^2+\rho \tau \sum_{n=k}^m\|e_n\|_n^2\le% |E_{k-1}|_G^2 +
 %\tau \theta_k \big (\frac 12+\varepsilon\big ))\|e_{k-1}\|^2 +
C_\varepsilon \tau \sum_{n=k}^m \|\widetilde d_n\|_{V'}^2,\]
with $\rho:=1-\theta_k-(1+\theta_k)\varepsilon>0$. 

To estimate $\widetilde d_n$, we note that the  bound of (v) yields 
\begin{align*}
\|(A(\hat u^\star_n )-A(\hat u_n ))u_n \|_{V'} &\le \sum_{j=0}^k \Bigl( \eps \| e_{n-j} \|_V 
+ C_{\eps,R} \| e_{n-j} \|_{H} \Bigr) \|u_n\|_{D}
\\
&\le
\sum_{j=0}^k \Bigl( 2\eps \| e_{n-j} \|_{V}^2 + C  \| e_{n-j} \|_{H}^2 \Bigr)  \|u_n\|_{D}.
\end{align*}
Absorbing the terms with $\| e_{n-j} \|_{V}^2$ we therefore obtain, for $k\le m \le M$,
\[ \| e_m \|_H^2 + \tau\sum_{n=k}^m \| e_n \|_V^2 \le C \tau\sum_{n=k}^m \| e_n \|_{H}^2 + 
 C\tau \sum_{n=k}^m \|d_n\|_{V'}^2,\]
 and the discrete Gronwall inequality then yields
\[\|e_n\|_{H} \le C\delta, \qquad k\le n \le M.\]

(d) (\emph{Complete time interval}) Inserting the previous bound in \eqref{max-reg-2-bdf}, we obtain
\[ \big\|(\dot e_n )_{n=k}^M\big\|_{ L^p(X)} +  \big\|(e_n )_{n=k}^M\big\|_{ L^p(D)} \le  C \delta,\]
which by (ii) and the argument in part (b) above further implies
\[\big\|(e_n )_{n=k}^M\big\|_{ L^\infty(W)} \le C \delta.\]
For the linearly implicit BDF method, this implies that 
$\hat u_{M+1}=\sum_{j=0}^{k-1} \gamma_j u_{M-j}$ is bounded by $\|\hat u_{M+1}\|_W \le CR$,
and hence the above arguments can be repeated to yield that for sufficiently small $\delta$ 
the bounds \eqref{M-RB} are  also satisfied for $M+1$, which contradicts the maximality 
of $M$ unless $M=N$.
\qed\medskip

While parts (a)--(c) of the above proof apply also to the fully implicit BDF methods, 
the argument in part (d) does not work for the fully implicit method. Here we need some 
a priori estimate from the existence proof, which is established in the next section.

\section{Existence of numerical solutions for the fully implicit scheme} 
\label{Sec:existence}

While existence and uniqueness of the numerical solution are obvious for the linearly implicit 
BDF method \eqref{bdf:lin-im1}, this is not so for the fully implicit method.
In this section, we prove existence and uniqueness of the numerical solution for the fully implicit 
BDF method \eqref{bdf:fully-im1} and complete the proof of Proposition~\ref{prop:stability} 
for this method.

\subsection{Schaefer's fixed point theorem}
Existence of the solution of the fully implicit BDF method \eqref{bdf:fully-im1} 
is proved with the following result.

\begin{lemma}[Schaefer's fixed point theorem {\cite[Chapter 9.2, Theorem 4]{Evans}}]
%{\rm(Schaefer's fixed point theorem \cite[Chapter 9.2, Theorem 4]{Evans})} 
\label{THMSchaefer}
Let $W$ be a Banach space and let ${\mathcal M}:W\rightarrow W$ be 
a continuous and compact map. If the set 
\begin{equation}
\bigl\{\phi\in W:\; \phi=\theta {\mathcal M}(\phi) \ \hbox{ for some }\, \theta\in [0,1] \bigr\}
\end{equation}
is bounded in $W$, then the map ${\mathcal M}$ has a fixed point.
\end{lemma}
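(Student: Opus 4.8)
The plan is to reduce the statement to Schauder's fixed point theorem --- any continuous map of a nonempty, closed, bounded, convex subset of a Banach space into itself with relatively compact range has a fixed point --- by composing $\mathcal{M}$ with a radial retraction onto a ball chosen large enough to contain the a priori bounded set in the hypothesis.

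First I would fix $M<\infty$ with $\|\phi\|_W\le M$ for every $\phi$ in the set $S:=\{\phi\in W:\ \phi=\theta\,\mathcal{M}(\phi)\ \text{for some}\ \theta\in[0,1]\}$; this is possible since $S$ is bounded by assumption. I would then pass to the closed ball $\bar B:=\{x\in W:\ \|x\|_W\le M+1\}$, which is nonempty, closed, bounded, and convex, and introduce the radial retraction $r:W\to\bar B$ given by $r(x)=x$ for $\|x\|_W\le M+1$ and $r(x)=(M+1)\,x/\|x\|_W$ otherwise; one checks directly that $r$ is continuous.

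Next I would consider the auxiliary map $\mathcal{N}:=r\circ\mathcal{M}|_{\bar B}:\bar B\to\bar B$. It is continuous, and its range is relatively compact because $\mathcal{M}$ is compact, so $\overline{\mathcal{M}(\bar B)}$ is compact and $r$ carries it to a compact set. Schauder's theorem then yields a fixed point $\phi^\star=\mathcal{N}(\phi^\star)=r(\mathcal{M}(\phi^\star))$ in $\bar B$. To finish, I would distinguish two cases. If $\|\mathcal{M}(\phi^\star)\|_W\le M+1$, then $r$ acts as the identity at $\mathcal{M}(\phi^\star)$, so $\phi^\star=\mathcal{M}(\phi^\star)$ and we have the desired fixed point. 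If instead $\|\mathcal{M}(\phi^\star)\|_W>M+1$, then $\phi^\star=\theta^\star\mathcal{M}(\phi^\star)$ with $\theta^\star:=(M+1)/\|\mathcal{M}(\phi^\star)\|_W\in(0,1)$, hence $\phi^\star\in S$ and therefore $\|\phi^\star\|_W\le M$; but $\|\phi^\star\|_W=M+1$ by construction, a contradiction. So only the first case occurs.

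The one substantial ingredient is Schauder's fixed point theorem, which I would simply invoke; that is where the real content (infinite-dimensional fixed point theory, ultimately reduced to Brouwer's theorem via finite-dimensional approximation of compact maps) sits. Within the reduction itself there is no serious obstacle: the only points needing care are choosing the ball of radius $M+1$ rather than $M$ (so that the second case yields a genuine contradiction) and verifying continuity of the radial retraction together with relative compactness of the range of $\mathcal{N}$, both of which are routine.
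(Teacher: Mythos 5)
Your proof is correct. The paper does not prove this lemma itself --- it is quoted verbatim from Evans \cite[Chapter 9.2, Theorem 4]{Evans} --- and your argument (radially retracting $\mathcal{M}$ onto a large closed ball, applying Schauder's fixed point theorem, and excluding the boundary case via the a priori bound on the set $S$) is essentially the standard proof given in that reference, with only the harmless variation of working on the ball of radius $M+1$ with a non-strict bound $\|\phi\|_W\le M$ instead of Evans' ball of radius $M$ with a strict inequality.
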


\subsection{Proof of the existence of the numerical solution and of Proposition~\ref{prop:stability}
for fully implicit BDF methods}
In the situation of Proposition~\ref{prop:stability}, we assume that $u_n\in D, n=k,\dotsc,M-1$, 
are solutions of 
\eqref{bdf:fully-im1} and satisfy 
\begin{equation}\label{mathind}
\|(u_n)_{n=0}^{M-1}\|_{L^\infty(W)}\leq R ,
\end{equation}
with $R$ of \eqref{RB-bounds},
and prove the existence a numerical solution $u_M$ 
for \eqref{bdf:fully-im1}, which also satisfies 
$\|u_M\|_{W}\leq R$.  

%Let $d_n$ be the defect of the implicit BDF method, i.e. 
%\begin{equation}
%\label{DefectIm}
%\frac1 \tau\sum\limits^k_{j=0}\delta_ju(t_{n-j}) = A(u(t_n))u(t_n) +d_n , 
%\end{equation}
%and consider the difference between \eqref{bdf:fully-im1} 
%and \eqref{DefectIm}, with $e_n:=u_n-u(t_n)$, 
%\begin{align}
%\label{BDF:ErrEq}
%\frac 1\tau \sum_{j=0}^k\delta_je_{n-j}
%&=A(u(t_n))e_n +(A(u(t_n)+e_n)-A(u(t_n)))e_n \nonumber\\
%&\quad 
%+(A(u(t_n)+e_n)-A(u(t_n)))u(t_n) -d_n ,
%\quad\mbox{for}\,\,\, n=k,\dots, N,
%\end{align}  
%with given initial data 
%$e_n:=u_n-u(t_n)$, $n=0,\dots,k-1$. 
%We see that existence of solution $u_n$ for \eqref{bdf:fully-im1} is 
%equivalent to existence of solution $e_n$ for \eqref{BDF:ErrEq}. 

%We prove existence of solution of \eqref{BDF:ErrEq}  
%by applying Schaefer's fixed point theorem. 

We define a map ${\mathcal M}: W \rightarrow W$ in the following way.   
%Let $\delta^*$ be a small positive constant to be determined later 
%(the generic positive constant $C$ below will not depend on 
%$\delta^*$ until it is determined). 
For any $\phi\in W$  we define 
\begin{equation}\label{rho_phi}
\rho_\phi  :=\min\bigg(\frac{\sqrt{\delta} }{\|\phi \|_{W}},1\bigg) .
\end{equation}
Clearly, $\rho_\phi $ depends continuously on $\phi \in W$, and 
\begin{equation}\label{rho_phi-est}
\|\rho_\phi \phi\|_{W}\leq \sqrt{\delta} . 
\end{equation}
Then we define $e_M={\mathcal M}(\phi)$ as the solution of the linear equation
\begin{equation}
\label{BDF:MapM}
\begin{aligned}
\frac 1\tau \sum_{j=0}^k\delta_je_{M-j}
&=A(u_M^\star)e_M +(A(u_M^\star+\rho_\phi\phi)-A(u_M^\star))e_M\\
&\quad +(A(u_M^\star+\rho_\phi\phi )-A(u_M^\star))u_M^\star -d_M.
\end{aligned}
\end{equation}

Using the compact imbedding of $D$ in $W$, the fact that the resolvent operator 
$(\delta_0/\tau+A(u_M^\star+\rho_\phi\phi))^{-1}$ maps from $X$ to $D$, and condition (iv) 
(with $\eps=1$), it is shown that the map ${\mathcal M}$ is continuous and compact. 
Moreover, if the map ${\mathcal M}$ has a fixed point $\phi$ with $\rho_\phi=1$, then 
$e_M={\mathcal M}(\phi)$ is a solution of 
\begin{equation}
\label{BDF:MapM00}
\begin{aligned}
\frac 1\tau \sum_{j=0}^k\delta_je_{M-j}
&=A(u_M^\star)e_M +(A(u_M^\star+e_M)-A(u_M^\star))e_M \\
&\quad 
+(A(u_M^\star+e_M)-A(u_M^\star))u_M^\star -d_M ,
\end{aligned}
\end{equation}
and $u_M:=u_M^\star+e_M$ is the solution of \eqref{bdf:fully-im1} with $n=M$.

To apply Schaefer's fixed point theorem, we assume that $\phi \in W$ 
and $\phi=\theta{\mathcal M}(\phi)$ for some $\theta\in[0,1]$.  
To prove existence of a fixed point for the map ${\mathcal M}$, we only need to prove 
that all such $\|\phi\|_{W}$ are uniformly bounded.% with respect to $\theta\in[0,1]$. 

Let $e_M={\mathcal M}(\phi)$.  %and $e_n=u_n-u_n^\star$ for $n=k,\dotsc,M-1$. 
Then $\phi=\theta e_M$ and \eqref{BDF:MapM} implies that $e_M$ is the solution of 
\begin{equation}
\label{BDF:FixedP00}
\begin{aligned}
\frac 1\tau \sum_{j=0}^k\delta_je_{M-j}
&=A(u_M^\star)e_M +(A(u_M^\star+\theta \rho_{\phi} e_M)-A(u_M^\star))e_M\\
&\quad 
+(A(u_M^\star+\theta \rho_\phi e_M)-A(u_M^\star))u_M^\star -d_M 
\end{aligned}
\end{equation}
and 
\begin{equation}
\label{BDF:FixedP}
\begin{aligned}
\frac 1\tau \sum_{j=0}^k\delta_je_{n-j}
&=A(u_n^\star)e_n +(A(u_n^\star+ e_n)-A(u_n^\star))e_n\\
&\quad 
+(A(u_n^\star+ e_n)-A(u_n^\star))u_n^\star -d_n 
\end{aligned}
\end{equation}
for $n=k,\dotsc, M-1$, satisfying
\begin{equation}\label{rhothetan}
\|\theta \rho_\phi e_M\|_{W}\leq \sqrt{\delta} . 
\end{equation}
In the same way as in Section~\ref{subsec:proof-stability},
we obtain
\begin{equation}
\label{BDF:fully-im-ErrEst}
\|(\dot e_n)_{n=k}^M\|_{L^p(X)}  +\|(e_n)_{n=k}^M\|_{L^p(D)}\leq  C\delta 
\end{equation}
and
\begin{equation}
\label{BDF:fully-im-W1inftyErr}
\|(e_n)_{n=0}^M\|_{L^\infty(W)} \leq C\delta.  
\end{equation}
Since $\phi =\theta e_M$ with $\theta\in[0,1]$, the last inequality implies uniform 
boundedness of $\|\phi\|_{W}$ with respect to $\theta\in[0,1]$, 
and this implies the existence of a fixed point $\phi$ for the map ${\mathcal M}$ 
by Lemma \ref{THMSchaefer}.  Moreover, in view of \eqref{BDF:fully-im-W1inftyErr} 
and since the fixed point $\phi$ satisfies $\phi=e_M$, 
for sufficiently small $\delta $ we have
\begin{equation}
\|\phi\|_{W} \leq \sqrt{\delta  } ,
\quad\text{and so}\quad \rho_\phi =1.
\end{equation}
This proves the existence of a solution of \eqref{bdf:fully-im1} for sufficiently small $\delta$, 
and the solution satisfies \eqref{BDF:fully-im-ErrEst} and \eqref{BDF:fully-im-W1inftyErr} and 
hence also $\|u_M\|_W \le R$, which completes the proof of Proposition~\ref{prop:stability} 
for the fully implicit BDF methods. 
\qed

\subsection{Uniqueness of the numerical solution} Suppose that there are two numerical solutions 
$u_n,\widetilde u_n\in D$  of \eqref{bdf:fully-im1}, both with $W$-norm bounded by $R$ as in 
the proof of existence. By induction, we assume unique numerical solutions 
$u_j$ with $W$-norm bounded by $R$ for $j<n$. The difference $e_n=u_n-\widetilde u_n$ 
then satisfies the equation
\[\frac {\delta_0}\tau \, e_n + A(\widetilde u_n) e_n = \bigl( A(u_n)-A(\widetilde u_n)\bigr) u_n.\]
We test this equation with $e_n$ and note that, with some $\alpha_R>0$ depending on $R$, 
we have, using conditions (iii) and (v),
\[\frac {\delta_0}\tau \, \| e_n\|_H^2 + \alpha_R \| e_n \|_V^2 \le \|e_n \|_V \bigl( \eps \|e_n\|_V 
+ C_{\eps,R} \|e_n\|_H \bigr) \|u_n \|_D,\]
which implies that there exists $\tau_R>0$ such that for $\tau\le \tau_R$, we have $e_n=0$.
We have thus shown uniqueness of the numerical solution $u_n\in D$ with $\|u_n\|_W\le R$ 
for stepsizes $\tau\le \tau_R$. 
%Stated differently, the numerical solution is unique in a ball in $W$ of a radius that grows 
%to infinity as the stepsize tends to zero.
%$\| e_n \|_V=0$. In the situations (P1) and (P2), 
%where the spaces are Sobolev spaces on $\varOmega$, this further implies that $e_n=0$ 
%almost everywhere in $\varOmega$ and hence also $e_n=0$ in $D$.

\section{Consistency error} \label{Sec:consistency}
The order of both the $k$-step fully implicit BDF method, described by the coefficients
$\delta_0,\dotsc,\delta_k$ and $1,$ and of the explicit $k$-step BDF method,
that is the method  described by the coefficients
$\delta_0,\dotsc,\delta_k$ and $\gamma_0,\dotsc,\gamma_{k-1},$ is $k,$ i.e.,
\begin{equation}
\label{order}
\sum_{i=0}^k(k-i)^\ell \delta_i=\ell k^{\ell-1}=
\ell \sum_{i=0}^{k-1}(k-i-1)^{\ell-1} \gamma_i,\quad \ell=0,1,\dotsc,k.  
\end{equation}
The defects (consistency errors) $d^n$ and $\tilde d^n$ of the schemes 
\eqref{bdf:fully-im1} and  \eqref{bdf:lin-im1} for the solution $u$ of \eqref{ivp}, 
i.e., the amounts by which the exact solution misses satisfying \eqref{bdf:fully-im1} 
and \eqref{bdf:lin-im1}, respectively, are given by 
\begin{equation}
\label{cons1}
d_n=\frac1 \tau\sum\limits^k_{j=0}\delta_ju(t_{n-j}) + A(u(t_n))u(t_n),
\end{equation}
and
\begin{equation}
\label{cons2}
\tilde d_n=\frac 1\tau \sum_{j=0}^k\delta_ju(t_{n-j})+
A\Big (\sum\limits^{k-1}_{j=0}\gamma_ju(t_{n-j-1})\Big )u(t_n),
\end{equation}
$n=k,\dotsc,N,$ respectively.
 
\begin{lemma} \label{lem:defect}
Under the regularity requirements \eqref{reg-1} or \eqref{reg-2}, 
the defects $d_n$ and $\tilde d_n$ are bounded by
\begin{equation}
\label{cons-err-est-1}
%\max_{0\le n\le N-q}\|d_n\|_{H^{-1}(\varOmega)} \le C\tau^k, \quad 
%\max_{0\le n\le N-q}\|\tilde d_n\|_{H^{-1}(\varOmega)} \le \tilde C\tau^k,
\max_{k\le n\le N}\|d_n\|_{W^{-1,q}(\varOmega)} \le C\tau^k, \quad 
\max_{k\le n\le N}\|\tilde d_n\|_{W^{-1,q}(\varOmega)} \le C\tau^k
\end{equation}
in case of \eqref{reg-1}, and by
\begin{equation}
\label{cons-err-est-2}
%\max_{0\le n\le N-q}\|d_n\|_{H^{-1}(\varOmega)} \le C\tau^k, \quad 
%\max_{0\le n\le N-q}\|\tilde d_n\|_{H^{-1}(\varOmega)} \le \tilde C\tau^k,
\max_{k\le n\le N}\|d_n\|_{L^q(\varOmega)} \le C\tau^k, \quad 
\max_{k\le n\le N}\|\tilde d_n\|_{L^q(\varOmega)} \le C\tau^k
\end{equation}
in case of \eqref{reg-2}.
\end{lemma}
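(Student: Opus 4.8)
The plan is to establish the defect bounds \eqref{cons-err-est-1} and \eqref{cons-err-est-2} by a standard Taylor-expansion (Peano-kernel) argument, exploiting the order conditions \eqref{order} together with the assumed regularity \eqref{reg-1}, resp.\ \eqref{reg-2}. First I would split each defect into two contributions: the \emph{linear multistep part} coming from $\tfrac1\tau\sum_{j=0}^k\delta_j u(t_{n-j})$, which should be compared with $\dot u(t_n)$, and the \emph{nonlinear (operator) part} $A(u(t_n))u(t_n)$ in the fully implicit case, resp.\ the difference $A\bigl(\sum_{j=0}^{k-1}\gamma_j u(t_{n-j-1})\bigr)u(t_n)-A(u(t_n))u(t_n)$ in the linearly implicit case. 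Since $u$ solves \eqref{ivp}, i.e.\ $\dot u(t_n)=-A(u(t_n))u(t_n)$, the fully implicit defect reduces to $d_n=\tfrac1\tau\sum_{j=0}^k\delta_j u(t_{n-j})-\dot u(t_n)$, and this is purely a property of the BDF coefficients applied to the smooth function $u$.

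Next I would apply the classical consistency estimate for $k$-step BDF: because the method has order $k$, the order conditions \eqref{order} give, for any sufficiently smooth $g$,
\begin{equation*}
\frac1\tau\sum_{j=0}^k\delta_j g(t_{n-j})-\dot g(t_n)
=\frac1\tau\int_{t_{n-k}}^{t_n}\kappa_k\!\Bigl(\frac{t_n-s}{\tau}\Bigr)\,g^{(k+1)}(s)\,\d s,
\end{equation*}
with a bounded Peano kernel $\kappa_k$ supported in $[0,k]$. Applying this with $g=u$ and taking the $W^{-1,q}(\varOmega)$-norm (case \eqref{reg-1}) or the $L^q(\varOmega)$-norm (case \eqref{reg-2}) inside the integral, and using $u\in C^{k+1}\bigl([0,T];W^{-1,q}\bigr)$, resp.\ $u\in C^{k+1}\bigl([0,T];L^q\bigr)$, yields at once $\|d_n\|\le C\tau^k\,\|u^{(k+1)}\|_{C([0,T];\,\cdot)}$ in the respective norm. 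This handles $d_n$ in both cases.

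For the linearly implicit defect $\tilde d_n$ I would write
\begin{equation*}
\tilde d_n=d_n+\Bigl(A\bigl(\widehat u(t_n)\bigr)-A\bigl(u(t_n)\bigr)\Bigr)u(t_n),
\qquad \widehat u(t_n):=\sum_{j=0}^{k-1}\gamma_j u(t_{n-j-1}),
\end{equation*}
so that only the extra term has to be estimated; $d_n$ is already controlled. The coefficients $\gamma_i$ are chosen so that $\widehat u(t_n)$ is an order-$k$ extrapolation of $u(t_n)$; indeed the second equality in \eqref{order} is exactly the statement that $\sum_{j=0}^{k-1}\gamma_j p(t_{n-j-1})=p(t_n)$ for polynomials $p$ of degree $<k$, whence by a Peano-kernel argument $\|\widehat u(t_n)-u(t_n)\|_{L^q}\le C\tau^k\|u^{(k)}\|_{C([0,T];L^q)}$ under \eqref{reg-1}, and even one derivative to spare under \eqref{reg-2}. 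Then I would invoke the smoothness of $a$ together with the boundedness of $u$ (the exact solution is bounded by the maximum principle, as noted after \eqref{ivp}): writing out $\bigl(A(\widehat u(t_n))-A(u(t_n))\bigr)u(t_n)=-\nabla\cdot\bigl((a(\widehat u(t_n))-a(u(t_n)))\nabla u(t_n)\bigr)$, one estimates this in $W^{-1,q}$ by $\|(a(\widehat u(t_n))-a(u(t_n)))\nabla u(t_n)\|_{L^q}\le \|a'\|_{L^\infty}\,\|\widehat u(t_n)-u(t_n)\|_{L^\infty}\,\|\nabla u(t_n)\|_{L^q}$, using $u(t_n)\in W^{1,q}$ from \eqref{reg-1}; the factor $\|\widehat u(t_n)-u(t_n)\|_{L^\infty}$ is $O(\tau^k)$ by Sobolev embedding $L^q\hookrightarrow L^\infty$ is false in general, so instead one bounds $\|\widehat u(t_n)-u(t_n)\|_{L^\infty}$ directly by the $C^{k}([0,T];L^\infty)$-seminorm of $u$, which is covered by \eqref{reg-1} since $W^{1,q}(\varOmega)\hookrightarrow L^\infty(\varOmega)$ for $q>d$. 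In case \eqref{reg-2} one argues analogously but measures everything in $L^q$, using $u(t_n)\in W^{2,q}$ and the product rule, and the extrapolation error is now $O(\tau^k)$ in $W^{1,q}\hookrightarrow L^\infty$.

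The routine parts are the two Peano-kernel estimates and the product-rule bookkeeping; I do not expect genuine difficulty there. The one point that needs a little care — and which I would treat as the main obstacle — is organizing the norms in the nonlinear term so that the $O(\tau^k)$ extrapolation error of $\widehat u(t_n)$ is multiplied only by quantities that are finite under the stated regularity, i.e.\ making sure that the loss of one spatial derivative incurred by the divergence in $A(\cdot)$ is exactly absorbed by passing from $L^q$ to $W^{-1,q}$ (case \eqref{reg-1}) and that in case \eqref{reg-2} the $W^{2,q}$-regularity of $u$ together with $q>d$ (hence $W^{1,q}\hookrightarrow L^\infty$) suffices to keep all factors bounded. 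Once the norms are matched up, the bounds \eqref{cons-err-est-1} and \eqref{cons-err-est-2} follow immediately.
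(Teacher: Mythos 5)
Your overall route is the same as the paper's: the paper likewise uses the differential equation to rewrite $d_n$ as the pure BDF consistency error $\frac1\tau\sum_{j=0}^k\delta_j u(t_{n-j})-\partial_t u(t_n)$ and Taylor-expands about $t_{n-k}$ using the first set of order conditions in \eqref{order}; for the linearly implicit method it writes $\tilde d_n=d_n+\bigl(A(u(t_n))-A(\hat u(t_n))\bigr)u(t_n)$, bounds the extrapolation error via the second set of order conditions, and estimates the operator-difference term by the product rule together with the embeddings $W^{1,q}\hookrightarrow L^\infty$, resp.\ $W^{2,q}\hookrightarrow W^{1,\infty}$, for $q>d$. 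Your treatment of $d_n$ in both cases and of $\tilde d_n$ in case \eqref{reg-2} matches this.

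The one step whose justification is not correct as written is the $\tilde d_n$ bound in case \eqref{reg-1}: you claim $\|\hat u(t_n)-u(t_n)\|_{L^\infty}=O(\tau^k)$ because the $C^{k}([0,T];L^\infty)$-regularity of $u$ is ``covered by \eqref{reg-1} since $W^{1,q}\hookrightarrow L^\infty$''. But \eqref{reg-1} provides $k$ time derivatives only with values in $L^q$, and $W^{1,q}$-regularity only for $u$ itself, not for $u^{(k)}$; hence under \eqref{reg-1} the Peano-kernel remainder of the extrapolation is $O(\tau^k)$ only in $L^q$, which does not suffice to bound $\|(a(u(t_n))-a(\hat u(t_n)))\nabla u(t_n)\|_{L^q}$ when one only knows $\nabla u(t_n)\in L^q$. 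One genuinely needs either $u^{(k)}$ valued in a space embedded in $L^\infty$ (e.g.\ $u\in C^{k}([0,T];W^{1,q})$, as is available under \eqref{reg-2}) or $\nabla u\in L^\infty$. This is a harmless strengthening given that the theorems assume a ``sufficiently regular'' solution — and the paper itself writes out only the \eqref{reg-2} case, declaring the other ``almost identical'' — but you should state the needed hypothesis explicitly rather than attribute it to \eqref{reg-1}. Your bound for the fully implicit defect $d_n$ under \eqref{reg-1} is fine, since it uses only $u^{(k+1)}\in C([0,T];W^{-1,q}(\varOmega))$.
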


%\textcolor{blue}{(To prove the $L^p(W^{2,p})$ error estimate 
%in Theorem \ref{Th:main}, we need the $L^p$ norms of the 
%defect. So I just modified the proof a little.)}

\begin{proof} Since the proofs for \eqref{cons-err-est-1} and \eqref{cons-err-est-2} are 
almost identical, we just present the proof of \eqref{cons-err-est-2}.
We first focus on the implicit scheme \eqref{bdf:fully-im1}. 
Using the differential equation 
in \eqref{ivp}, we rewrite \eqref{cons1} in the form
\begin{equation}
\label{cons4}
d_n=\frac1\tau\sum\limits^k_{j=0}\delta_ju(t_{n-j}) -  \partial_t u(t_n).
\end{equation}
By Taylor expansion about $t_{n-k},$ we see that,
due to the order conditions of the implicit BDF method,
i.e., the first equality in \eqref{order}, leading  terms of order up to $k-1$ 
cancel, and we obtain 
%
%\begin{equation}
%\label{cons5}
%\begin{aligned}
% d_n=\frac 1{k!}\Bigg [ &\frac1\tau\sum\limits^k_{j=0} \delta_j\!
%\int_{t_{n-k}}^{t_{n-j}}(t_{n-j}-s)^ku^{(k+1)}(s)\, ds\\
%&{}-k \int_{t_{n-k}}^{t_n}(t_n-s)^{k-1}u^{(k+1)}(s)\, ds\Bigg ];
%\end{aligned}
%\end{equation}
%
%
\begin{equation}
\label{cons5}
 d_n=\frac 1{k!}\Bigg [ \frac1\tau\sum\limits^k_{j=0} \delta_j\!
\int_{t_{n-k}}^{t_{n-j}}(t_{n-j}-s)^ku^{(k+1)}(s)\, ds
-k \int_{t_{n-k}}^{t_n}(t_n-s)^{k-1}u^{(k+1)}(s)\, ds\Bigg ];
\end{equation}
here, we used the notation $u^{(m)}:=\frac {\partial^m u}{\partial t^m}.$
%Thus, under obvious regularity requirements, 
Taking the $L^q$ norm on both sides 
of \eqref{cons5}, 
we obtain the desired optimal order consistency estimate  \eqref{cons-err-est-2}
for the scheme \eqref{bdf:fully-im1}.

Next, concerning the scheme \eqref{bdf:lin-im1}, from \eqref{cons1} and \eqref{cons2} 
we immediately obtain the following relation between $\tilde d_n$ and $d_n$
\begin{equation}
\label{cons9}
\tilde d_n=d_n+\big ( A(u(t_n))-A (\hat u(t_n) )\big )u(t_n)
\end{equation}
with
\begin{equation}
\label{bdf:hat-sol}
\hat u(t_n):=\sum\limits^{k-1}_{i=0}\gamma_iu(t_{n-i-1}).
\end{equation}
%
%in analogy to \eqref{bdf:hat}.
By Taylor expanding about $t_{n-k},$ 
%and using the second equality in \eqref{order}, 
leading  terms of order up to $k-1$ cancel again, this time due to the second
equality in \eqref{order}, and we obtain
\begin{equation*}
u(t_n)- \hat u(t_n)=\frac 1{(k-1)!}\Bigg [\! \int_{t_{n-k}}^{t_n}\!\!(t_n-s)^{k-1}u^{(k)}(s) ds
-\!\sum\limits^k_{j=0}\gamma_j \!\int_{t_{n-k}}^{t_{n-j-1}}\!\!(t_{n-j-1}-s)^{k-1}u^{(k)}(s) ds\!\Bigg ]\!,
\end{equation*}
whence, taking the $W^{1,q}$ norm on both sides 
of this relation, we immediately infer that 
%\textcolor{blue}{by taking of the $W^{1,p}$ norm on both sides 
%of the equation above}
%
%\begin{equation}
%\label{cons10}
%\|u(t_n)- \hat u(t_n)\|_{L^2(\varOmega)}\le c\tau^k.
%\end{equation}
\begin{equation}
\label{cons10}
\|u(t_n)- \hat u(t_n)\|_{W^{1,q}(\varOmega)}\le C\tau^k.
\end{equation}

Now, $\big ( A(u(t_n))-A (\hat u(t_n) )\big )u(t_n)=
\nabla \cdot \big (\big ( a(u(t_n))-a (\hat u(t_n) )\big )\nabla u(t_n)\big ),$
whence
%
%\begin{align*}
%\|\big ( A(u(t_n))&{}-A (\hat u(t_n) )\big )u(t_n)\|_{H^{-1}(\varOmega)}=
%\|\big ( a(u(t_n))-a (\hat u(t_n) )\big )\nabla u(t_n)\|_{L^2(\varOmega)}\\
%&{}\le C\| a(u(t_n))-a (\hat u(t_n) )\|_{L^2(\varOmega)}\le C\| u(t_n)-\hat u(t_n)\|_{L^2(\varOmega)};
%\end{align*}
%\textcolor{blue}{
\begin{align*}
&\|\big ( A(u(t_n))-A (\hat u(t_n) )\big )u(t_n)\|_{L^q(\varOmega)}\\
&=\|\nabla\cdot\big ( (a(u(t_n))-a (\hat u(t_n) )\nabla u(t_n)\big )\|_{L^q(\varOmega)}\\
&{}\le C\| a(u(t_n))-a (\hat u(t_n) )\|_{L^\infty(\varOmega)}
\|u(t_n)\|_{W^{2,q}(\varOmega)} \\
&\quad 
+C\| a(u(t_n))-a (\hat u(t_n) )\|_{W^{1,q}(\varOmega)}
\|u(t_n)\|_{W^{1,\infty}(\varOmega)} \\
&\le C\| u(t_n)-\hat u(t_n)\|_{W^{1,q}(\varOmega)} ;
\end{align*}
%}
%
therefore, in view of \eqref{cons10}, we have 
\begin{equation}
\label{cons11}
\|\big ( A(u(t_n))-A (\hat u(t_n) )\big )u(t_n)\|_{L^q(\varOmega)}\le C\tau^k.
\end{equation}
Combining \eqref{cons11} and the bound for $d_n$, 
we obtain the desired optimal order consistency estimate  \eqref{cons-err-est-2}
also for the scheme \eqref{bdf:lin-im1}. 
\end{proof}

\section{Proof of Theorems~\ref{Th:main-1} and~\ref{Th:main-2}} 
\label{Sec:proof}
The cases (P1) and (P2) of Section~\ref{subsec:abstract-framework} correspond to the 
situation in Theorems~\ref{Th:main-1} and \ref{Th:main-2}, respectively. 
Lemma~\ref{lem:framework} ensures that the considered problems are of the type considered 
in the abstract framework of Section~\ref{subsec:abstract-framework}, and 
Lemma~\ref{lem:unif-maxreg-bdf} ensures the required uniform discrete maximal regularity 
of the BDF methods. Lemma~\ref{lem:defect} yields that the situation of 
Section~\ref{subsec:stability} holds with $u_n^\star=u(t_n)$ and $\delta\le C\tau^k$. 
The error bounds of Theorems~\ref{Th:main-1} and~\ref{Th:main-2} then follow from 
Proposition~\ref{prop:stability}.

It remains to give the proofs of Lemmas \ref{lem:framework} and~\ref{lem:unif-maxreg-bdf}. 
This is done in the final two sections.

\section{$W$-locally uniform maximal regularity}
\label{Sec:unif-maxreg}

\subsection{Proof of (i) in Lemma \ref{lem:framework}} 

Let $\varOmega\subset\R^d$ be a bounded Lipschitz domain 
and consider the following initial and boundary value problem for a linear
parabolic equation, with a time-independent self-adjoint operator,
\begin{equation}\label{PDECauchy}
\left\{
\begin{alignedat}{2}
&\partial_tu(x,t)-\nabla\cdot \big(b(x)\nabla u(x,t)\big) =0\ \ 
&&\text{for}\,\,(x,t)\in \varOmega\times \R_+,\\[2pt]
&u(x,t)=0 &&\text{for}\,\,(t,x)\in \partial\varOmega\times \R_+ ,\\[2pt]
&u(x,0)=u_0(x) &&\text{for}\,\,x\in \varOmega ,%\\[5pt]
\end{alignedat}\right.
\end{equation}
where the coefficient $b(x)$ satisfies
\begin{equation}\label{ellipt} 
K_0^{-1}\leq b (x) \leq K_0.  
\end{equation} 
We consider $W=C^\alpha(\overline\varOmega)$ and $W=C^{1,\alpha}(\overline\varOmega)$ 
in the settings (P1) and (P2), respectively.
In this section, we combine results from the literature and prove $W$-locally uniform 
maximal parabolic regularity of \eqref{PDECauchy}, 
where the constant depends only on $K_0$ 
and $\|b\|_W$.

Let $\{E_2(t)\}_{t>0}$ denote the semigroup of operators on 
$L^2(\varOmega)$, 
which maps $u_0$ to $u(t,\cdot)$, given by \eqref{PDECauchy} 
and let $A_2$ denote the generator of this semigroup. 
Then $\{E_2(t)\}_{t>0}$ extends to a bounded analytic semigroup 
on $L^2(\varOmega)$, in the sector $\varSigma_{\theta}=\{z\in\C\,:\,z\ne 0, |\arg z|< \theta\}$, 
where $\theta$ can be arbitrarily close to $\pi/2$ 
(see \cite{Davis89,Ouhabaz}), 
and the kernel $G(t,x,y)$ of the semigroup $\{E_2(t)\}_{t>0}$ 
has an analytic extension to the right-half plane, satisfying 
(see \cite[p.\ 103]{Davis89}) 
\begin{equation}
|G(z,x,y)|\leq C_\theta|z|^{-\frac{d}{2}}
\e^{-\frac{|x-y|^2}{C_\theta |z|}}, 
\quad \forall\, z\in \varSigma_{\theta},\,\,\forall\, x,y\in\varOmega ,
\quad\forall\,\theta\in(0,\pi/2), 
\label{GKernelE0}
\end{equation}
where the constant $C_\theta$ %in \eqref{GKernelE0} 
depends only on $K_0$ and $\theta$. In other words, 
the operator $A=-e^{i\theta}A_2$ satisfies the condition 
of \cite[Theorem 8.6]{KW}, with $m=2$ and 
$g(s)=C_\theta e^{-s^2/C_\theta}$ (also see \cite[Remark 8.23]{KW}).
As a consequence of \cite[Theorem 8.6]{KW}, 
$E_2(t)$ extends to an analytic 
semigroup $E_q(t)$ on $L^q(\varOmega)$, $1<q<\infty$, which is $R$-bounded 
in the sector $\varSigma_\theta$ 
for all $\theta\in(0,\pi/2)$ 
and the $R$-bound depends only on $C_\theta$ and $q$. (We refer to \cite{KW} for a 
discussion of the notion of $R$-boundedness.)
To summarize, we have the following lemma. 

\begin{lemma}[Angle of $R$-boundedness of the semigroup]\label{RbdSg}
For any given $1<q<\infty$, the semigroup 
$\{E_q(t):L^q(\varOmega)\rightarrow L^q(\varOmega)\}_{t>0}$ 
defined by the parabolic problem 
\eqref{PDECauchy} is $R$-bounded in the sector 
$\varSigma_\theta=\{z\in\C: |{\rm arg}(z)|<\theta\}$ 
for all $\theta\in(0,\pi/2)$,  
and the $R$-bound depends only on $K_0$, $\theta$ and $q$. 
\end{lemma}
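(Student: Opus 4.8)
The plan is to obtain the lemma by assembling the two facts recalled just above: the Gaussian upper bound \eqref{GKernelE0} for the analytically continued kernel of $\{E_2(t)\}_{t>0}$, whose constant $C_\theta$ depends only on $K_0$ and $\theta$, together with the Kunstmann--Weis extrapolation theorem \cite[Theorem 8.6]{KW} (and \cite[Remark 8.23]{KW}), which upgrades such a kernel bound to $R$-boundedness of the extrapolated semigroup on $L^q(\varOmega)$, $1<q<\infty$. The proof is thus essentially an invocation of these results with careful tracking of the constants.

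Concretely, I would fix $\theta\in(0,\pi/2)$ and apply \cite[Theorem 8.6]{KW} to the rotated operator $-e^{i\theta}A_2$, which still generates a bounded analytic semigroup on $L^2(\varOmega)$ since $A_2$ generates one of angle arbitrarily close to $\pi/2$, using \eqref{GKernelE0} with $m=2$ and majorant $g(s)=C_\theta e^{-s^2/C_\theta}$, exactly as set up in the paragraph preceding the statement. The theorem then yields that $E_2$ extrapolates to a bounded analytic semigroup $E_q$ on $L^q(\varOmega)$ which is moreover $R$-analytic, i.e.\ $\{E_q(z):z\in\varSigma_\theta\}$ is $R$-bounded, with an $R$-bound depending only on the data of the majorant, hence only on $C_\theta$ and $q$, and thus only on $K_0$, $\theta$ and $q$. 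Letting $\theta<\pi/2$ vary then gives the assertion in every sub-sector of the right half-plane.

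I do not expect a genuine obstacle here; the one point I would be careful about is that the $R$-analyticity angle can be pushed all the way up to $\pi/2$, which is possible precisely because \eqref{GKernelE0} holds on \emph{every} sector $\varSigma_\theta$ with $\theta<\pi/2$, with a constant $C_\theta=C_\theta(K_0,\theta)$ that is finite for each fixed $\theta$, even though it degenerates as $\theta\uparrow\pi/2$. If one preferred to bypass the $R$-analyticity part of \cite[Theorem 8.6]{KW}, one could instead apply its $R$-boundedness statement for the single ray $\{E_q(t)\}_{t>0}$ to each rotated generator $-e^{i\phi}A_2$ with $|\phi|\le\theta$, uniformly in $\phi$ by the uniformity of $C_\theta$, and then conclude via the stability of $R$-boundedness under passing to absolutely convex hulls; but this detour is not needed.
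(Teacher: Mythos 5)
Your proposal is correct and follows essentially the same route as the paper: the lemma is obtained exactly by combining the Gaussian bound \eqref{GKernelE0} for the analytically continued kernel (with $C_\theta$ depending only on $K_0$ and $\theta$) with \cite[Theorem 8.6]{KW} applied to the rotated operator $-e^{i\theta}A_2$, taking $m=2$ and $g(s)=C_\theta e^{-s^2/C_\theta}$, so that the $R$-bound depends only on $K_0$, $\theta$ and $q$. Your remark about tracking the constant as $\theta\uparrow\pi/2$ matches the paper's treatment.
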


According to  Weis' characterization of maximal $L^p$-regularity \cite[Theorem 4.2]{Weis2}, 
Lemma \ref{RbdSg} implies the following two results.

\begin{lemma}[Angle of $R$-boundedness of the resolvent]\label{RbdRes}
Let $A_q$ be the generator of the semigroup $\{E_q(t)\}_{t>0}$ 
defined by the parabolic problem 
\eqref{PDECauchy}, where $1<q<\infty$. 
Then the set $\{ \lambda(\lambda-A_q)^{-1}\,:\, \lambda\in \varSigma_\theta \}$
 is $R$-bounded in the sector 
$\varSigma_\theta=\{z\in\C: |{\rm arg}(z)|<\theta\}$ 
for all $\theta\in(0,\pi)$,  
and the $R$-bound depends only on $K_0$, $\theta$ and $q$. 
\end{lemma}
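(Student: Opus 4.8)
The plan is to deduce this resolvent $R$-boundedness statement from Lemma~\ref{RbdSg} about the $R$-boundedness of the semigroup, using the standard equivalence between $R$-bounded analytic semigroups and $R$-bounded resolvent families, as provided by Weis' characterization of maximal $L^p$-regularity \cite[Theorem 4.2]{Weis2} (and the surrounding functional-analytic facts in \cite{KW}). First I would recall that, by Lemma~\ref{RbdSg}, the set $\{E_q(z)\,:\,z\in\varSigma_{\theta}\}$ is $R$-bounded for every $\theta\in(0,\pi/2)$, with $R$-bound depending only on $K_0$, $\theta$ and $q$; in particular $\{E_q(z)\,:\,z\in\varSigma_{\theta},\ |z|\le 1\}$ together with the bounded analyticity shows that $-A_q$ generates a bounded analytic semigroup of angle $\pi/2$ on $L^q(\varOmega)$. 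The point is to convert the $R$-sectoriality of the semigroup (angle $<\pi/2$ for $z$) into $R$-sectoriality of the resolvent (angle $<\pi$ for $\lambda$).

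The key step is the representation of the resolvent as a Laplace-type integral of the semigroup. For $\lambda$ in a sector $\varSigma_{\theta'}$ with $\theta'<\pi$, one writes
\[
\lambda(\lambda-A_q)^{-1} = \lambda\int_0^\infty \e^{-\lambda t}E_q(t)\,\d t
\]
and, more flexibly, rotates the contour: for $\lambda=|\lambda|\e^{\i\psi}$ with $|\psi|<\pi$ one integrates $E_q$ along a ray $\{r\e^{\i\omega}:r>0\}$ with $\omega$ chosen so that $|\psi+\omega|\le \pi/2-\epsilon$ (possible since the semigroup is analytic in $\varSigma_{\theta}$ for all $\theta<\pi/2$) and so that $\Real(\lambda \e^{\i\omega})>0$. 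This gives $\lambda(\lambda-A_q)^{-1}=\int_0^\infty \lambda\e^{\i\omega}\e^{-\lambda \e^{\i\omega} r}E_q(\e^{\i\omega}r)\,\d r$. Now I would invoke the standard lemma (see \cite{KW}, e.g. the results connecting $R$-sectoriality of semigroups and resolvents) that if $\{\mathcal{T}(z):z\in\varSigma_{\theta}\}$ is $R$-bounded then so is the family of integral averages $\{\int_0^\infty \phi(r)\mathcal{T}(\e^{\i\omega}r)\,\d r\}$ over scalar kernels $\phi$ with $\int_0^\infty|\phi(r)|\,\d r$ uniformly bounded: $R$-boundedness is preserved under taking absolutely convergent integrals (and convex combinations) of operators from an $R$-bounded set, with the $R$-bound multiplied by the $L^1$-norm of the scalar kernel. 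Since $\int_0^\infty |\lambda\e^{\i\omega}\e^{-\lambda\e^{\i\omega}r}|\,\d r = |\lambda|/\Real(\lambda\e^{\i\omega})$ is bounded by a constant depending only on the angle, this yields the $R$-boundedness of $\{\lambda(\lambda-A_q)^{-1}:\lambda\in\varSigma_{\theta}\}$ for every $\theta\in(0,\pi)$, with $R$-bound controlled by $K_0$, $\theta$, $q$.

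The main obstacle, or rather the point requiring care, is the angle: the semigroup is $R$-bounded only in sectors of half-angle strictly less than $\pi/2$, and one needs the resolvent to be $R$-bounded in sectors of half-angle strictly less than $\pi$ (i.e., angle of $R$-sectoriality equal to $\pi$, matching that the angle of the semigroup is $\pi/2$). This is exactly handled by the contour-rotation argument above: for $\lambda$ near the imaginary axis (argument close to $\pm\pi/2$) one must rotate the integration ray of $E_q$ almost all the way to $\mp\pi/2$, which is allowed precisely because Lemma~\ref{RbdSg} gives $R$-boundedness of $E_q$ in $\varSigma_\theta$ for \emph{all} $\theta<\pi/2$; the resulting kernel bound $|\lambda|/\Real(\lambda\e^{\i\omega})$ stays finite as long as $|\arg\lambda|<\pi$. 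An alternative, cleaner route is simply to cite \cite[Theorem 4.2]{Weis2} directly: that theorem states the equivalence of maximal $L^p$-regularity with $R$-boundedness of $\{\lambda(\lambda-A)^{-1}:\lambda\in\i\R\setminus\{0\}\}$ (equivalently of the analytic semigroup), and its proof contains precisely the passage from $R$-boundedness of the semigroup to $R$-boundedness of the resolvent family on a sector; we need only track that all constants depend solely on $K_0$, $\theta$, $q$, which is immediate since the same is true of the $R$-bound in Lemma~\ref{RbdSg}.
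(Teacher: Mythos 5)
Your proposal is correct and follows essentially the same route as the paper, which simply invokes Weis' characterization \cite[Theorem 4.2]{Weis2} to pass from the $R$-boundedness of the semigroup in sectors $\varSigma_\theta$, $\theta<\pi/2$ (Lemma~\ref{RbdSg}), to the $R$-sectoriality of the resolvent up to angle $\pi$; your contour-rotated Laplace representation together with the $L^1$-kernel/convex-hull stability of $R$-bounds is exactly the standard argument behind that citation, with constants visibly depending only on $K_0$, $\theta$, $q$. One minor slip: the delicate case is $\arg\lambda$ close to $\pm\pi$ (not $\pm\pi/2$), where the ray must be rotated almost to $\mp\pi/2$ and the bound $1/\cos(\arg\lambda+\omega)$ remains finite precisely because $\theta<\pi$ — but this does not affect the validity of your argument.
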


\begin{lemma}[Maximal $L^p$-regularity]\label{LemmaMaxLp}
Let $A_q$ be the generator of the semigroup $\{E_q(t)\}_{t>0}$ 
defined by the parabolic problem 
\eqref{PDECauchy}. 
Then the solution $u(t)$ of the parabolic initial value problem
\begin{equation} 
\label{IVPq} 
    \left\{
    \begin{aligned}
        u'(t)&{} = A_qu(t) + f(t), \quad t>0,\\
        u(0) &{}= 0,
    \end{aligned}
    \right.
\end{equation}
%
%satisfies that 
belongs to $D(A_q)$ for almost all $t\in\R_+$, and 
\begin{equation}\label{MaxLpAbstr}
\|u'\|_{L^p(\R_+;L^q(\varOmega))}
+\|A_qu\|_{L^p(\R_+;L^q(\varOmega))}\leq C_{p,q}
\|f\|_{L^p(\R_+;L^q(\varOmega))}, 
%\quad 
%\text{ for all}\
%\forall\, f\in L^p(\R_+;L^q(\varOmega)) ,
\end{equation} 
for all $f\in L^p(\R_+;L^q(\varOmega))$ and $1<p,q<\infty$, 
where $C_{p,q}$ depends only on $p, q$ and~$K_0$. 
\end{lemma}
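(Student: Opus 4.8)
The plan is to obtain Lemma~\ref{LemmaMaxLp} as a direct application of Weis' operator-valued multiplier characterisation of maximal $L^p$-regularity \cite[Theorem~4.2]{Weis2} on the UMD space $X=L^q(\varOmega)$, feeding in the $R$-boundedness supplied by Lemma~\ref{RbdSg} (equivalently Lemma~\ref{RbdRes}). Concretely, one has to check the hypotheses of Weis' theorem for the operator $\mathcal A=-A_q$ governing \eqref{IVPq}: that $X$ is UMD, that $\mathcal A$ generates a bounded analytic semigroup, and that $\mathcal A$ is $R$-sectorial of angle strictly less than $\pi/2$; then \eqref{MaxLpAbstr} and the claimed dependence of the constant follow.

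First I would record that $L^q(\varOmega)$ is a UMD Banach space for $1<q<\infty$, so that Weis' theorem is applicable. Next I would invoke Lemma~\ref{RbdRes} with some fixed angle $\theta\in(\pi/2,\pi)$: this gives that $\{\lambda(\lambda-A_q)^{-1}:\lambda\in\varSigma_\theta\}$ is $R$-bounded with $R$-bound $M=M(K_0,\theta,q)$, i.e.\ $\mathcal A=-A_q$ is $R$-sectorial with $R$-angle $\pi-\theta<\pi/2$ and in particular generates a bounded analytic semigroup. (If one wants, Lemma~\ref{RbdRes} can itself be deduced from Lemma~\ref{RbdSg} by representing the resolvent as the Laplace transform of $\{E_q(t)\}$ along a ray inside $\varSigma_\theta$ and using that $R$-bounds are inherited by such operator-valued integrals.) Then \cite[Theorem~4.2]{Weis2} yields that $\mathcal A$ has maximal $L^p$-regularity, with a maximal-regularity constant bounded in terms of $p$, the UMD constant of $L^q$ (hence only $q$), and the $R$-angle and $R$-bound $M$; since $M=M(K_0,\theta,q)$ and $\theta$ is fixed, this gives $C_{p,q}=C_{p,q}(K_0)$ as asserted.

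Finally, to get the estimate on all of $\R_+$ rather than only on a finite interval $(0,T)$, I would use that on the bounded domain $\varOmega$ the uniform ellipticity \eqref{ellipt} together with the homogeneous Dirichlet condition implies, via the Poincar\'e inequality, that $0\in\rho(A_q)$ and that $\{E_q(t)\}_{t>0}$ decays exponentially; hence the mild solution of \eqref{IVPq} with $u(0)=0$ is defined on all of $\R_+$ and \eqref{MaxLpAbstr} holds with the $L^p(\R_+;L^q(\varOmega))$ norms.

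I do not expect a real obstacle here: the mathematical substance is entirely contained in Lemma~\ref{RbdSg} and in Weis' theorem, both available. The only points that need care are the bookkeeping of constants along the chain Lemma~\ref{RbdSg}~$\Rightarrow$~$R$-sectoriality of angle $<\pi/2$~$\Rightarrow$~maximal $L^p$-regularity, so that the final $C_{p,q}$ depends on the coefficient only through $K_0$, and the routine upgrade from a finite interval to $\R_+$ using the exponential decay of the semigroup on the bounded domain.
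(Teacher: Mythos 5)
Your proposal is correct and follows essentially the same route as the paper, which obtains this lemma directly from Lemma~\ref{RbdSg} (equivalently Lemma~\ref{RbdRes}) via Weis' characterization \cite[Theorem 4.2]{Weis2} on the UMD space $L^q(\varOmega)$, with the constant controlled through the $R$-bound depending only on $K_0$, $\theta$, $q$. Your additional remark on exponential decay for the estimate on all of $\R_+$ is harmless but not needed, since Weis' theorem for generators of bounded analytic semigroups already yields maximal regularity on $\R_+$.
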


If the domain $\varOmega$ is smooth and 
$b\in C^{1,\alpha}(\bar\varOmega)$, then (see \cite[Chapter 3, Theorems 6.3--6.4]{CW98}) 
\[\|u\|_{W^{2,q}(\varOmega)}
\leq C_{q} \|A_qu\|_{L^q(\varOmega)} ,\] 
where the constant $C_q$ depends only on 
$K_0,q,\alpha$ and $\|b\|_{C^{1,\alpha}(\overline\varOmega)}$. 
Hence, Lemma \ref{LemmaMaxLp} implies (i) for the case (P2).

In the case (P1), we need to use the following result. 

\begin{lemma}\label{THMJK}
For any given bounded Lipschitz domain $\varOmega\subset\R^d, d=2,3$, 
the solution of the elliptic boundary value problem 
\begin{equation} 
\label{EllipticEq}
\left\{
\begin{alignedat}{2}
&\nabla\cdot(b(x)\nabla u) = f\quad&&\text{in}\,\,\,\varOmega, \\
%[5pt]
&u=0 &&\text{on}\,\,\,\partial\varOmega,
\end{alignedat}\right.
\end{equation} 
satisfies
\begin{equation}\label{W1quf}
\|u\|_{W^{1,q}(\varOmega)}\leq C_q\|f\|_{W^{-1,q}(\varOmega)} ,
\quad\forall\, q_d'<q<q_d ,
\end{equation} 
where $q_d>2$ is some %positive 
constant which depends on the domain and 
$1/q_d+1/q_d'=1$, and the constant $C_q$ depends on $q$, $K_0$, $\varOmega$, $\alpha$ 
and $\|b\|_{C^\alpha(\overline\varOmega)}$. 
In particular, $q_2>4$ and $q_3>3$ 
for any given bounded Lipschitz domain $\varOmega\subset\R^d$, 
$d=2,3$. 
\end{lemma}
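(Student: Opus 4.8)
The plan is to derive the estimate by perturbation from the constant-coefficient (Laplacian) case, where it is precisely the Jerison--Kenig solvability theorem for the inhomogeneous Dirichlet problem in Lipschitz domains. I would first recall that statement: on a bounded Lipschitz domain $\varOmega\subset\R^d$ there is an open interval $(q_d',q_d)\ni 2$, with $q_d$ depending only on $\varOmega$ and satisfying $q_3>3$ and $q_2>4$, such that $-\Delta\colon W^{1,q}_0(\varOmega)\to W^{-1,q}(\varOmega)$ is an isomorphism for every $q\in(q_d',q_d)$; consequently $\nabla\cdot(b_0\nabla\,\cdot\,)=b_0\Delta$ with a constant $b_0\in[K_0^{-1},K_0]$ is an isomorphism with the norm of the inverse bounded by $K_0$ times the Jerison--Kenig constant, and these estimates localize: on balls $B_r(x_0)\subset\varOmega$ (which are smooth) they hold, scale invariantly, for every $q\in(1,\infty)$, and on the Lipschitz ``boundary balls'' $B_r(x_0)\cap\varOmega$ for $q\in(q_d',q_d)$. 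This already pins down $q_d$ as a quantity depending on the geometry of $\varOmega$ alone.

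Next I would establish, for a weak solution $u\in W^{1,q}_0(\varOmega)$ of \eqref{EllipticEq} and any $q\in(q_d',q_d)$, the a priori estimate
\[
\|u\|_{W^{1,q}(\varOmega)}\le C\big(\|f\|_{W^{-1,q}(\varOmega)}+\|u\|_{L^2(\varOmega)}\big),
\]
with $C$ depending only on $q,K_0,\varOmega,\alpha$ and $\|b\|_{C^\alpha(\overline\varOmega)}$, by freezing the coefficient at small scales. At a point $x_0$ and scale $r$, compare $u$ on $B_r(x_0)$ (or, near the boundary, on $B_r(x_0)\cap\varOmega$) with the solution $h$ of the frozen constant-coefficient equation $\nabla\cdot(b(x_0)\nabla h)=0$ sharing the same Dirichlet data; the difference $w=u-h$ then lies in $W^{1,q}_0$ of that subdomain and solves $\nabla\cdot(b(x_0)\nabla w)=\nabla\cdot\big((b(x_0)-b)\nabla u\big)$, so by the scale-invariant constant-coefficient estimate $\|\nabla w\|_{L^q}\le C\,\|b(x_0)-b\|_{L^\infty(B_{cr})}\,\|\nabla u\|_{L^q(B_{cr})}\le C\,[b]_{C^\alpha(\overline\varOmega)}\,r^\alpha\,\|\nabla u\|_{L^q(B_{cr})}$, while $\nabla h$ inherits the higher integrability and decay of the constant-coefficient problem. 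Feeding these two facts into a Vitali-covering / good-$\lambda$ (Calderón--Zygmund) argument of Caffarelli--Peral and Byun--Wang type, the smallness of the modulus $r^\alpha$ makes the iteration close and yields the displayed bound on the whole Jerison--Kenig range $(q_d',q_d)$.

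Finally I would remove the lower-order term and settle existence and uniqueness. For $q\ge2$, the Lax--Milgram energy estimate $\|u\|_{H^1_0}\le K_0\|f\|_{H^{-1}}\le C\|f\|_{W^{-1,q}}$ (using $W^{-1,q}(\varOmega)\hookrightarrow H^{-1}(\varOmega)$ for $q\ge2$ on the bounded domain $\varOmega$, and $W^{1,q}_0\subset H^1_0$) gives existence, uniqueness, and, combined with the a priori estimate, the asserted bound $\|u\|_{W^{1,q}(\varOmega)}\le C\|f\|_{W^{-1,q}(\varOmega)}$. For $q_d'<q<2$, pass to the conjugate exponent $q'\in(2,q_d)$ and use the formal self-adjointness of $\nabla\cdot(b\nabla\,\cdot\,)$ to deduce the $W^{1,q}$ estimate, and with it existence, by duality from the $W^{1,q'}$ estimate already proved. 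I expect the main obstacle to be the boundary part of the freezing argument on a merely Lipschitz domain: comparing $u$ with the constant-coefficient Dirichlet problem on the Lipschitz boundary balls $B_r(x_0)\cap\varOmega$ forces the use of Jerison--Kenig-type regularity there, and it is exactly this step that restricts the admissible exponents to $q<q_d$ rather than the full range available for smooth domains.
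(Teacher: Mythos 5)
Your proposal takes essentially the same route as the paper, whose proof consists of citing Jerison--Kenig \cite[Theorem 0.5]{JK95} for constant coefficients and asserting that the variable, H\"older-continuous coefficient case follows ``by a standard perturbation argument.'' Your coefficient-freezing/good-$\lambda$ scheme together with the energy estimate for $q\ge 2$ and duality for $q<2$ is a legitimate instantiation of exactly that perturbation step, and the ranges $q_2>4$, $q_3>3$ come from Jerison--Kenig just as you state.
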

Lemma \ref{THMJK} was proved in \cite[Theorem 0.5]{JK95} 
for constant coefficient $b(x)$, which can be extended to  
variable coefficient $b(x)$ by a standard perturbation 
argument. 
By using Lemmas \ref{LemmaMaxLp} and \ref{THMJK}, we can also prove 
the following maximal $L^p$-regularity on $W^{-1,q}(\varOmega)$, 
which implies (i) for the case (P1) 
(with $X=W^{-1,q}(\varOmega)$ and $d<q<q_d$). 
\begin{lemma}\label{Le:max-reg-con0}
If the domain $\varOmega$ is Lipschitz continuous 
and $b \in C^\alpha(\overline\varOmega)$ 
for some $\alpha\in(0,1)$, then 
the solution of \eqref{IVPq} satisfies 
\begin{equation}
\|\partial_tu\|_{L^p(\R_+;W^{-1,q}(\varOmega))}
+\|u\|_{L^p(\R_+;W^{1,q}(\varOmega))}
\leq C_{p,q}\|f \|_{L^p(\R_+;W^{-1,q}(\varOmega))},  
\label{MaxLpW1q0}
\end{equation}
for all $f\in L^p(\R_+;W^{-1,q}(\varOmega))$, $1<p<\infty$  and $q_d'<q<q_d$;  
the constant $C_{p,q}$ depends only on $p$, $q$, $K_0$,  $\varOmega$, $\alpha$ and 
$\|b\|_{C^\alpha(\overline\varOmega)}$. 
\end{lemma}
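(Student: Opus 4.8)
\emph{Plan of proof.} The plan is to transfer the maximal $L^p$-regularity already available on $L^q(\varOmega)$ (Lemma~\ref{LemmaMaxLp}) down to $W^{-1,q}(\varOmega)$ by an isomorphism that intertwines the two realizations of the operator, so that afterwards only the soft invariance of maximal regularity under similarity is needed. Throughout, write $\mathcal A_q = -A_q$ for the $L^q$-realization of $-\nabla\cdot(b\,\nabla\,\cdot)$ with homogeneous Dirichlet boundary conditions (positive, self-adjoint on $L^2(\varOmega)$); since $\varOmega$ is bounded, $0\in\rho(\mathcal A_q)$, so fractional powers of $\mathcal A_q$ are well defined, and by the Gaussian bounds \eqref{GKernelE0} together with the $R$-sectoriality of Lemma~\ref{RbdRes} the operator $\mathcal A_q$ has a bounded $H^\infty$-calculus (in particular bounded imaginary powers) on $L^q(\varOmega)$, with a bound depending only on $q$ and $K_0$.

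The key step I would establish first is the isomorphism
\[
\mathcal A_q^{1/2}\colon L^q(\varOmega)\;\longrightarrow\;W^{-1,q}(\varOmega)\quad\text{for }\ q_d'<q<q_d,
\]
equivalently $D(\mathcal A_{q'}^{1/2})=W^{1,q'}_0(\varOmega)$ with uniformly equivalent norms, the equivalence constants depending only on $q$, $K_0$, $\varOmega$, $\alpha$ and $\|b\|_{C^\alpha(\overline\varOmega)}$. For $q=2$ this is immediate, since $\mathcal A_2$ is self-adjoint with form domain $H^1_0(\varOmega)$; for general $q$ in the stated range it is the Kato-square-root estimate $\|\mathcal A_q^{1/2}u\|_{L^q}\simeq\|\nabla u\|_{L^q}$ on a Lipschitz domain, obtained by combining the elliptic isomorphism $-\nabla\cdot(b\,\nabla\,\cdot)\colon W^{1,q}_0(\varOmega)\to W^{-1,q}(\varOmega)$ of Lemma~\ref{THMJK} with the bounded $H^\infty$-calculus of $\mathcal A_q$ (via complex interpolation of the domains $[L^q,D(\mathcal A_q)]_{1/2}=D(\mathcal A_q^{1/2})$, together with the $L^q$ bounds for the Riesz transform $\nabla\mathcal A_q^{-1/2}$ and its reverse on Lipschitz domains, interpolated from the self-adjoint case $q=2$). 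This step is where all the work sits; the rest is formal. The main obstacle is precisely to obtain this square-root isomorphism \emph{with a constant controlled only by the quantities listed above} and in exactly the exponent range $q_d'<q<q_d$ furnished by Lemma~\ref{THMJK}.

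Given the isomorphism $J:=\mathcal A_q^{1/2}\colon L^q(\varOmega)\to W^{-1,q}(\varOmega)$, let $\mathcal A_{-1,q}$ denote the realization of $-\nabla\cdot(b\,\nabla\,\cdot)$ on $W^{-1,q}(\varOmega)$; by Lemma~\ref{THMJK} its domain is $W^{1,q}_0(\varOmega)$, one has $0\in\rho(\mathcal A_{-1,q})$, and $\|u\|_{W^{1,q}}\le C\|\mathcal A_{-1,q}u\|_{W^{-1,q}}$. Since $J$ commutes with $\mathcal A_q$ and with its resolvent, one checks that $\mathcal A_{-1,q}=J\,\mathcal A_q\,J^{-1}$, that $(\lambda-\mathcal A_{-1,q})^{-1}=J\,(\lambda-\mathcal A_q)^{-1}\,J^{-1}$ for $\lambda\in\varSigma_\theta$, and that the corresponding analytic semigroups are conjugate by $J$; in particular $\mathcal A_{-1,q}$ is exactly the generator attached to \eqref{PDECauchy} on $W^{-1,q}(\varOmega)$. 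Because $R$-boundedness of a family of operators is preserved under conjugation by a fixed isomorphism, the $R$-boundedness of $\{\lambda(\lambda-\mathcal A_q)^{-1}:\lambda\in\varSigma_\theta\}$ on $L^q(\varOmega)$ from Lemma~\ref{RbdRes} transfers, up to the factor $\|J\|\,\|J^{-1}\|$, to $R$-boundedness of $\{\lambda(\lambda-\mathcal A_{-1,q})^{-1}:\lambda\in\varSigma_\theta\}$ on $W^{-1,q}(\varOmega)$. By Weis' characterization \cite[Theorem 4.2]{Weis2}, $\mathcal A_{-1,q}$ has maximal $L^p$-regularity on $W^{-1,q}(\varOmega)$ for every $1<p<\infty$; applied to \eqref{IVPq} this gives $\|\partial_t u\|_{L^p(\R_+;W^{-1,q})}+\|\mathcal A_{-1,q}u\|_{L^p(\R_+;W^{-1,q})}\le C\|f\|_{L^p(\R_+;W^{-1,q})}$, and the elliptic estimate $\|u\|_{W^{1,q}}\le C\|\mathcal A_{-1,q}u\|_{W^{-1,q}}$ of Lemma~\ref{THMJK} then yields \eqref{MaxLpW1q0}. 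Finally, tracking the constants through Lemmas~\ref{LemmaMaxLp}, \ref{THMJK}, \ref{RbdRes} and the interpolation step shows that $C_{p,q}$ depends only on $p$, $q$, $K_0$, $\varOmega$, $\alpha$ and $\|b\|_{C^\alpha(\overline\varOmega)}$, as asserted.
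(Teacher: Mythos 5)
Your overall strategy is in substance the same as the paper's: the paper also transfers the $L^q$ maximal regularity of Lemma \ref{LemmaMaxLp} to the $W^{-1,q}$ setting by intertwining with $(-A_q)^{1/2}$, only it does so directly on the Duhamel formula (writing $\nabla u=\nabla(-A_q)^{-1/2}w$ with $w$ controlled by Lemma \ref{LemmaMaxLp}, and bounding $\|(-A_q)^{-1/2}f\|_{L^q}\le C\|f\|_{W^{-1,q}}$ via the dual operator $(-A_q)^{-1/2}\nabla\cdot$), rather than through your conjugation-of-resolvents argument. The abstract part of your proof is fine: conjugation by a fixed isomorphism preserves $R$-bounds, Weis' theorem applies on $W^{-1,q}(\varOmega)$ (which is UMD, e.g. because your $J$ makes it isomorphic to $L^q(\varOmega)$), and the elliptic estimate of Lemma \ref{THMJK} recovers the $W^{1,q}$ norm and, via the equation, the bound on $\partial_t u$.

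The genuine gap is exactly the step you flag as ``where all the work sits'': the square-root isomorphism, equivalently the $L^q$-boundedness of the Riesz transform $\nabla(-A_q)^{-1/2}$ and of its dual $(-A_q)^{-1/2}\nabla\cdot$ on the full range $q_d'<q<q_d$ with constants depending only on $K_0$, $q$, $\varOmega$, $\alpha$ and $\|b\|_{C^\alpha(\overline\varOmega)}$. Your proposed derivation does not deliver it. First, interpolation from the self-adjoint case $q=2$ can only reach exponents up to $2$ (plus, at best, a perturbative neighbourhood), whereas the application needs $q>d\ge 2$, i.e. precisely the range $2<q<q_d$ where the Riesz transform bound is the hard result and where the admissible range is dictated by the elliptic $W^{1,q}$ theory of Lemma \ref{THMJK}, not by interpolation with the $L^2$ endpoint. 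Second, the identity $D((-A_q)^{1/2})=[L^q(\varOmega),D(A_q)]_{1/2}$ (valid under bounded imaginary powers) is of no use unless you can also identify $[L^q(\varOmega),D(A_q)]_{1/2}$ with $W^{1,q}_0(\varOmega)$; on a Lipschitz domain with merely H\"older continuous $b$, $D(A_q)$ is not $W^{2,q}(\varOmega)\cap W^{1,q}_0(\varOmega)$, so that identification is not available. The paper closes exactly this gap in its Appendix: the Riesz transform bound is obtained from Shen's criterion \cite[Theorem B]{Shen05}, verified through a Caccioppoli-type local estimate which is itself derived from the $W^{1,q}$ estimate of Lemma \ref{THMJK}; the dual bound then follows by duality since the range $q_d'<q<q_d$ is self-dual. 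If you replace your interpolation sketch by that Riesz-transform lemma (or reproduce Shen's argument), the rest of your proof goes through and gives the stated constant dependence.
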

\begin{proof}
Since the solution of \eqref{IVPq} is given by 
\[u(t)=\int_0^t E_q(t-s)f(s)\d s ,\]
Lemma \ref{LemmaMaxLp} implies
that the map from $f$ to $A_q u$ given by the
formula 
\[A_qu(t)=\int_0^t A_qE_q(t-s)f(s)\d s\]
is bounded in $L^p(0,T;L^q(\varOmega))$. In other words, 
if we define 
\[w(t):=-\int_0^t  A_qE_q(t-s)(-A_q)^{-1/2}f(s)\d s ,\]
then we have 
\begin{equation}\label{w-f}
\|w\|_{L^p(0,T;L^q(\varOmega))} 
\leq C_{p,q}\|(-A_q)^{-1/2}f\|_{L^p(0,T;L^q(\varOmega))} ,
\end{equation}
where the fractional power operator $(-A_q)^{-1/2}$ is well 
defined (due to the self-adjointness and positivity of $-A_q$)
and commutes with $A_q$. It is straightforward to check that 
\[\nabla u=\nabla (-A_q)^{-1/2}w  .\]
Since the Riesz transform $\nabla (-A_q)^{-1/2}$ is bounded on $L^q(\varOmega)$ for 
$q_d'<q<q_d$ (see Appendix), it follows that $\|\nabla u\|_{L^p(0,T;L^{q}(\varOmega))}
\leq C_{p,q}\|w\|_{L^p(0,T;L^{q}(\varOmega))}$; therefore, in view of \eqref{w-f}, we have
\begin{equation}\label{nablaun0}
\|\nabla u\|_{L^p(0,T;L^{q}(\varOmega))}
%\leq C_{p,q}\|w\|_{L^p(0,T;L^{q}(\varOmega))}
\leq C_{p,q}\|(-A_q)^{-1/2}f\|_{L^p(0,T;L^q(\varOmega))} .
\end{equation}
Moreover, since $(-A_q)^{-1/2}\nabla\cdot\,$ is the dual of the Riesz transform 
$\nabla (-A_q)^{-1/2}$, 
it is also bounded on $L^q(\varOmega)$ for any $q_d'<q<q_d$, 
and so we have
\begin{align*}%\label{Aq-12fn}
\|(-A_q)^{-1/2}f\|_{L^p(0,T;L^q(\varOmega))}
&{}
=\|(-A_q)^{-1/2}\nabla\cdot \nabla\varDelta^{-1} f\|_{L^p(0,T;L^q(\varOmega))}\\ 
&{}\leq \|\nabla\varDelta^{-1} f\|_{L^p(0,T;L^q(\varOmega))},
\end{align*}
whence
\begin{equation}\label{Aq-12fn0}
\|(-A_q)^{-1/2}f\|_{L^p(0,T;L^q(\varOmega))}
\leq C_{p,q}\|f\|_{L^p(0,T;W^{-1,q}(\varOmega))}.
\end{equation}
\eqref{nablaun0}--\eqref{Aq-12fn0} yield \eqref{MaxLpW1q0}.  
\end{proof}

\subsection{Proof of Lemma~\ref{lem:unif-maxreg-bdf}}\label{Sec:MLpBDF}

In this section, we consider the BDF time discretization of \eqref{IVPq}: 
\begin{align}
&\label{def:BDF} 
\frac{1}{\tau} \sum_{j=0}^k \delta_j u_{n-j}   
=A_qu_n+f_n, \qquad n \geq k, \\
&\label{starting-from-zero}
u_0=0\quad\text{and}\quad 
u_1,\dotsc,u_{k-1}\,\,\,\text{given (possibly nonzero)}.
\end{align}

In view of Lemma \ref{RbdRes}, we have the following result, 
which implies Lemma \ref{lem:unif-maxreg-bdf} for the case (P2). 

\begin{proposition}[{\cite[Theorems 4.1--4.2 and Remark 4.3]{KLL}}]\label{Pr:BDFk}
For $1\leq k\leq 6$, the solution of \eqref{def:BDF}--\eqref{starting-from-zero} 
satisfies
\begin{equation}
\begin{aligned}
        &\big\|(\dot u_n )_{n=k}^N\big\|_{L^p(L^q(\varOmega))} 
        + \big\|(A_qu_n )_{n=k}^N\big\|_{L^p(L^q(\varOmega))}\\
        &\leq C_{p,q}\bigg (\tau \sum_{n=1}^{k-1} 
        \Big\|\frac{u_n-u_{n-1}}{\tau}\Big\|^p_{L^q(\varOmega)}\bigg )^{\frac{1}{p}} 
+C_{p,q}\bigg (\tau \sum_{n=1}^{k-1} \|A_qu_n\|^p_{L^q(\varOmega)}\bigg )^{\frac{1}{p}} \\
&\quad + C_{p,q}\big\|(f_n)_{n=k}^N\big\|_{L^p(L^q(\varOmega))} ,
\end{aligned}
\end{equation}
 for $1<p,q<\infty$,   where the constant 
 $C_{p,q}$ depends only on $K_0$ and $q,$ i.e.,  
it is independent of $\tau, N$ and $b$. %    }
\end{proposition}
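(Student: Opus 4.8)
The plan is to follow the approach of Kov\'acs, Li \& Lubich \cite{KLL}: pass to the generating-function (discrete Fourier) transform of the BDF recursion and then invoke the discrete operator-valued Mikhlin multiplier theorem of Blunck \cite{Blu1}, whose hypotheses are supplied by the $R$-boundedness of the resolvent in Lemma~\ref{RbdRes}; here $L^q(\varOmega)$, $1<q<\infty$, serves as the underlying UMD space. I would first treat the case of vanishing starting values $u_0=\dots=u_{k-1}=0$. Extending $(u_n)$ and $(f_n)$ by zero to all $n\in\Z$ and writing $\widetilde u(\zeta)=\sum_n u_n\zeta^n$, $\widetilde f(\zeta)=\sum_n f_n\zeta^n$, the recursion \eqref{def:BDF} becomes $\bigl(\delta(\zeta)/\tau-A_q\bigr)\widetilde u(\zeta)=\widetilde f(\zeta)$, whence, on the unit circle $|\zeta|=1$,
\[
\sum_n \dot u_n\,\zeta^n = M_0(\zeta)\,\widetilde f(\zeta),\qquad
\sum_n (A_q u_n)\,\zeta^n = M_1(\zeta)\,\widetilde f(\zeta),
\]
with $\lambda=\lambda(\zeta)=\delta(\zeta)/\tau$, $M_0(\zeta)=\lambda(\lambda-A_q)^{-1}$ and $M_1(\zeta)=M_0(\zeta)-I$.

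Because the BDF method of order $k\le6$ is $A(\alpha)$-stable with angle $\alpha=\alpha_k>0$, one has $|\arg\delta(\zeta)|\le\pi-\alpha$ for $|\zeta|\le1$, so that $\lambda(\zeta)$ lies in the fixed sector $\varSigma_{\pi-\alpha/2}$ for $\zeta\ne1$, while at $\zeta=1$ one has $\lambda=0$, which lies in the resolvent set of $A_q$ since the spectrum of $A_q$ is contained in $(-\infty,0)$. Consequently Lemma~\ref{RbdRes} shows that $\{M_0(\zeta):|\zeta|=1\}$, hence also $\{M_1(\zeta)=M_0(\zeta)-I\}$, is $R$-bounded, with $R$-bound depending only on $K_0$, $q$ and (through $\alpha_k$) on $k$. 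For the derivative (Marcinkiewicz) condition I would use the identity
\[
(\zeta-1)M_0'(\zeta)=(\zeta-1)\,\frac{\delta'(\zeta)}{\delta(\zeta)}\,M_0(\zeta)\bigl(I-M_0(\zeta)\bigr),
\]
obtained by differentiating $M_0=\lambda(\lambda-A_q)^{-1}$ in $\zeta$ and simplifying; the crucial point is that the factor $1/\tau$ cancels in $\delta'/\delta$. Since for $k\le6$ the polynomial $\delta$ has $\zeta=1$ as its only, and simple, zero in the closed unit disk (zero-stability of BDF up to order six), the scalar function $(\zeta-1)\delta'(\zeta)/\delta(\zeta)$ is bounded on $|\zeta|=1$ (as is the same expression multiplied by the extra bounded factor $1+\zeta$ occurring in Blunck's precise formulation, since $\delta(-1)\ne0$). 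Hence $\{(\zeta-1)M_0'(\zeta)\}=\{(\zeta-1)M_1'(\zeta)\}$ is $R$-bounded, again with bound depending only on $K_0$, $q$, $k$. Blunck's theorem then yields that $M_0$ and $M_1$ are bounded Fourier multipliers on $\ell^p(\Z;L^q(\varOmega))$ with operator norm controlled by these $R$-bounds and the UMD constant of $L^q(\varOmega)$, which gives $\|(\dot u_n)\|_{L^p(L^q)}+\|(A_q u_n)\|_{L^p(L^q)}\le C_{p,q}\|(f_n)\|_{L^p(L^q)}$ with $C_{p,q}$ depending only on $p$, $q$, $K_0$.

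For general starting values I would split $u_n=v_n+w_n$, where $(v_n)$ solves \eqref{def:BDF} with the given $(f_n)$ and zero starting values, estimated above, and $(w_n)$ solves the homogeneous recursion ($f_n\equiv0$, $n\ge k$) with $w_j=u_j$, $0\le j\le k-1$. Extending $(w_n)$ by zero to $n<0$, the sequence $g_n:=\tau^{-1}\sum_{j=0}^k\delta_j w_{n-j}-A_q w_n$ vanishes for $n\ge k$ and for $n<0$, while for $0\le n\le k-1$ it is an explicit fixed linear combination of $w_0=0,\dots,w_n=u_n$ and of $A_q u_n$. Applying the multiplier estimate of the previous step to the $\Z$-recursion with right-hand side $(g_n)$, and bounding the finitely many terms $\|g_n\|_{L^q}$, $n<k$, by means of the telescoping estimate $\tau^{-1}\|u_m\|_{L^q}=\tau^{-1}\|u_m-u_0\|_{L^q}\le\sum_{i=1}^m\|(u_i-u_{i-1})/\tau\|_{L^q}$, one arrives at precisely the two sums over $n=1,\dots,k-1$ in the statement, with constants depending only on $p$, $q$, $K_0$ (and on the fixed $k$); since only a bounded number of such terms enters, no dependence on $\tau$ or $N$ appears.

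The step I expect to be the main obstacle is the verification of the derivative ($R$-)Mikhlin condition \emph{uniformly in the stepsize}: a direct estimate of $(\zeta-1)M_0'(\zeta)$ contains factors $\lambda=\delta(\zeta)/\tau$ that blow up as $\tau\to0$, and only the algebraic rearrangement above removes them --- by exploiting simultaneously the sectoriality of $\delta(\zeta)/\tau$, the $R$-boundedness of the resolvent on a sector of angle strictly less than $\pi$ (Lemma~\ref{RbdRes}), and the simplicity of the zero of $\delta$ at $\zeta=1$, which is exactly the zero-stability property that confines the argument to $k\le6$. Everything else --- the generating-function bookkeeping, the UMD property of $L^q(\varOmega)$, and the passage from constant to variable coefficient $b$ already carried out in the proof of Lemma~\ref{RbdRes} --- is routine.
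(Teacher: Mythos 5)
Your argument is correct and follows essentially the same route as the paper, which establishes Proposition~\ref{Pr:BDFk} by invoking \cite{KLL} (generating functions plus Blunck's discrete operator-valued multiplier theorem \cite{Blu1} on the UMD space $L^q(\varOmega)$, fed by the $R$-bounded resolvent of Lemma~\ref{RbdRes} coming from the Gaussian bounds, which is exactly what makes the constant depend only on $K_0$, $p$, $q$ and not on $b$, $\tau$, $N$). Your algebraic identity for $(\zeta-1)M_0'(\zeta)$, the use of $A(\alpha)$-stability and the simple zero of $\delta$ at $\zeta=1$ for $k\le 6$, and the reduction of nonzero starting values to an extra right-hand side supported on the first $k$ steps reproduce the cited proof (Theorems 4.1--4.2 and Remark 4.3 of \cite{KLL}) faithfully.
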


By applying Proposition \ref{Pr:BDFk}, 
we prove the following result, 
which implies Lemma \ref{lem:unif-maxreg-bdf} for the case (P1) 
(with $X=W^{-1,q}(\varOmega)$ 
and $d<q<q_d$).  

\begin{proposition}\label{Th:BDFk}
Let $1\leq k\leq 6$. 
If the domain $\varOmega$ is Lipschitz continuous 
and the coefficient satisfies $b \in C^\alpha(\overline\varOmega)$ 
for some $\alpha\in(0,1)$, then 
the solution of \eqref{def:BDF}--\eqref{starting-from-zero} satisfies 
\begin{equation}\label{BDFconvex}
\begin{aligned}
        &\big\|(\dot u_n )_{n=k}^N\big\|_{L^p(W^{-1,q}(\varOmega))} 
        + \big\|(u_n )_{n=k}^N\big\|_{L^p(W^{1,q}(\varOmega))}\\
        &\leq C_{p,q}\bigg (\tau \sum_{n=1}^{k-1} 
        \Big\|\frac{u_n-u_{n-1}}{\tau}\Big\|^p_{W^{-1,q}(\varOmega)}\bigg )^{\frac{1}{p}} 
+C_{p,q}\bigg (\tau \sum_{n=1}^{k-1} \|u_n\|^p_{W^{1,q}(\varOmega)}\bigg )^{\frac{1}{p}}\\
&\quad + C_{p,q}\big\|(f_n)_{n=k}^N\big\|_{L^p(W^{-1,q}(\varOmega))} ,
\end{aligned}
\end{equation}
for all $1<p<\infty$ and $q_d'<q<q_d$, and the constant 
 $C_{p,q}$ depends only on $K_0, q$, $\varOmega$, $\alpha$ and 
$\|b\|_{C^\alpha(\overline\varOmega)}$
$($independent of $\tau$ and $N).$ 
  \end{proposition}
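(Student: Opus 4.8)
\textbf{Proof plan for Proposition~\ref{Th:BDFk}.}
The plan is to reduce the $W^{-1,q}$ discrete maximal regularity to the already available $L^q$ discrete maximal regularity of Proposition~\ref{Pr:BDFk}, by the same duality/Riesz-transform device used in the continuous-time Lemma~\ref{Le:max-reg-con0}. First I would rewrite the discrete problem \eqref{def:BDF} by applying the operator $(-A_q)^{-1/2}$, which is well defined, self-adjoint, positive, and commutes with $A_q$ (and with the difference operator $\dot{(\cdot)}_n$, since the latter acts only in the time index). Setting $w_n:=(-A_q)^{1/2}u_n$ one obtains a BDF discretization $\dot w_n = A_q w_n + (-A_q)^{-1/2}f_n$ with starting values $w_0=0$, $w_j=(-A_q)^{1/2}u_j$, to which Proposition~\ref{Pr:BDFk} applies in the $L^q$ setting, giving
\[
\big\|(\dot w_n)_{n=k}^N\big\|_{L^p(L^q)} + \big\|(A_q w_n)_{n=k}^N\big\|_{L^p(L^q)}
\le C_{p,q}\,\big\|((-A_q)^{-1/2}f_n)_{n=k}^N\big\|_{L^p(L^q)} + (\text{starting terms}).
\]
Then I would use that the Riesz transform $\nabla(-A_q)^{-1/2}$ is bounded on $L^q(\varOmega)$ for $q_d'<q<q_d$ (cited to the Appendix, exactly as in the proof of Lemma~\ref{Le:max-reg-con0}) to pass from $\|w_n\|_{L^q}$-type control to $\|\nabla u_n\|_{L^q}$, i.e.\ to $\|u_n\|_{W^{1,q}}$ after invoking the Poincar\'e inequality; dually, $(-A_q)^{-1/2}\nabla\cdot$ is bounded on $L^q$, which converts $\|(-A_q)^{-1/2}f_n\|_{L^q}$ into $\|f_n\|_{W^{-1,q}}$. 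For the $\dot u_n$ term one notes $\dot u_n = (-A_q)^{-1/2}\dot w_n$ and that $(-A_q)^{-1/2}$ maps $L^q$ boundedly into $W^{1,q}$, hence $W^{-1,q}$ is controlled a fortiori; alternatively one reads $\dot u_n$ directly off the equation as $\dot u_n = A_q u_n + f_n$ and bounds $\|A_q u_n\|_{W^{-1,q}} = \|\nabla\cdot(b\nabla u_n)\|_{W^{-1,q}} \le C\|u_n\|_{W^{1,q}}$.

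The one genuine subtlety — and the step I expect to be the main obstacle — is the treatment of the nonzero starting values $u_1,\dots,u_{k-1}$, because the natural starting data for the transformed problem are $(-A_q)^{1/2}u_j$, and I must re-express the quantities $\tau\sum_{n=1}^{k-1}\|(u_n-u_{n-1})/\tau\|_{L^q}^p$ and $\tau\sum_{n=1}^{k-1}\|A_q(-A_q)^{-1/2}u_n\|_{L^q}^p = \tau\sum\|(-A_q)^{1/2}u_n\|_{L^q}^p$ appearing on the right-hand side of Proposition~\ref{Pr:BDFk} in terms of the $W^{-1,q}$ and $W^{1,q}$ norms of the $u_j$ actually occurring in \eqref{BDFconvex}. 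This is handled again by the same two operator-boundedness facts: $\|(-A_q)^{1/2}v\|_{L^q}\sim\|v\|_{W^{1,q}}$ via the Riesz transform, and $\|(-A_q)^{-1/2}g\|_{L^q}\le C\|g\|_{W^{-1,q}}$ via its dual, applied with $v=u_n$ and $g=(u_n-u_{n-1})/\tau$ respectively; since there are only finitely many ($k-1$, a fixed number) starting indices, no $\tau$- or $N$-uniformity issue arises here beyond the constants already controlled.

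Finally I would assemble the pieces: combine the $L^q$ estimate for $w_n$ with the Riesz-transform bounds to get $\big\|(u_n)_{n=k}^N\big\|_{L^p(W^{1,q})}$ and $\big\|(\dot u_n)_{n=k}^N\big\|_{L^p(W^{-1,q})}$ bounded by $C_{p,q}\big\|(f_n)_{n=k}^N\big\|_{L^p(W^{-1,q})}$ plus the two starting-value sums in their $W^{-1,q}$/$W^{1,q}$ form, which is precisely \eqref{BDFconvex}. Throughout, the constants produced depend only on $K_0$, $q$, $\varOmega$, $\alpha$ and $\|b\|_{C^\alpha(\overline\varOmega)}$, because each ingredient — Proposition~\ref{Pr:BDFk}, the Riesz-transform bound of Lemma~\ref{THMJK}/the Appendix, and its dual — carries exactly this dependence and is uniform in $\tau$ and $N$.
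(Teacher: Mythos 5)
Your proposal follows essentially the paper's own route: transfer the $W^{-1,q}$ estimate to the $L^q$ discrete maximal regularity of Proposition~\ref{Pr:BDFk} applied with the data $(-A_q)^{-1/2}f_n$, then use the $L^q$-boundedness of the Riesz transform $\nabla(-A_q)^{-1/2}$ and of its dual $(-A_q)^{-1/2}\nabla\cdot$ for $q_d'<q<q_d$ (Appendix), exactly as in the continuous-time Lemma~\ref{Le:max-reg-con0}, the only organizational difference being that the paper reduces to zero starting values by citing \cite[Remark 4.3]{KLL} rather than converting the starting terms through the norm equivalences you describe. One notational slip to fix: with $w_n:=(-A_q)^{1/2}u_n$ the transformed scheme has data $(-A_q)^{1/2}f_n$, so you should instead take $w_n:=(-A_q)^{-1/2}u_n$ (then $\|A_q w_n\|_{L^q}=\|(-A_q)^{1/2}u_n\|_{L^q}$ is what the maximal regularity bound controls and what the Riesz transform converts into $\|u_n\|_{W^{1,q}}$), which is precisely what the paper does implicitly via the discrete convolution representation $u_n=\sum_{j}\tau E_{n-j}f_j$.
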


\begin{proof} 
In view of 
\cite[Remark 4.3]{KLL}, we only need to consider the case 
$u_0=\cdots=u_{k-1}=0$. 
The proof is similar to the proof of Lemma \ref{Le:max-reg-con0}. 

We consider the expansion
\[\Bigl( \frac{\delta(\zeta)}\tau -A_q \Bigr)^{-1} = \tau \sum_{n=0}^\infty E_n \zeta^n, 
\qquad |\zeta|<1,\]
which yields (see, e.g., \cite[Section 7]{KLL})
\[u_n=\sum_{j=k}^n\tau E_{n-j}f_j .\]
Proposition \ref{Pr:BDFk} implies
that the map from $(f_n)_{n=k}^N$ to $(A_qu_n)_{n=k}^N$ given by the
formula 
\[A_qu_n=\sum_{j=k}^n\tau A_qE_{n-j}f_j\]
is bounded in $L^p(L^q(\varOmega))$. In other words, 
if we define 
\[w_n:=-\sum_{j=k}^n\tau A_qE_{n-j}(-A_q)^{-1/2}f_j ,\]
then we have 
\begin{equation}\label{w-f-discrete}
\|(w_n)_{n=1}^N\|_{L^p(L^q(\varOmega))} 
\leq C_{p,q}\|((-A_q)^{-1/2}f_n)_{n=1}^N\|_{L^p(L^q(\varOmega))} ,
\end{equation}
where the fractional power operator $(-A_q)^{-1/2}$  commutes with $A_q$.   
It is straightforward to check that 
\[\nabla u_n=\nabla (-A_q)^{-1/2}w_n .\]
Since the Riesz transform $\nabla (-A_q)^{-1/2}$ is bounded on $L^q(\varOmega)$ for 
$q_d'<q<q_d$ (see Appendix), it follows that $\|(\nabla u_n)_{n=1}^N\|_{L^p(L^{q}(\varOmega))}
\leq C_{p,q}\|(w_n)_{n=1}^N\|_{L^p(L^{q}(\varOmega))}$;
therefore, in view of \eqref{w-f-discrete}, we have
\begin{equation}\label{nablaun}
\|(\nabla u_n)_{n=1}^N\|_{L^p(L^{q}(\varOmega))}
%\leq C_{p,q}\|(w_n)_{n=1}^N\|_{L^p(L^{q}(\varOmega))}
\leq C_{p,q}\|((-A_q)^{-1/2}f_n)_{n=1}^N\|_{L^p(L^q(\varOmega))} .
\end{equation}
Moreover, since $(-A_q)^{-1/2}\nabla\cdot\,$ is the dual of the Riesz transform 
$\nabla (-A_q)^{-1/2}$, it is also bounded on $L^q(\varOmega)$ for any $q_d'<q<q_d$, 
and so we have
\begin{align*}%\label{Aq-12fn}
\|((-A_q)^{-1/2}f_n)_{n=1}^N\|_{L^p(L^q(\varOmega))}
&{}
=\|((-A_q)^{-1/2}\nabla\cdot \nabla\varDelta^{-1} f_n)_{n=1}^N\|_{L^p(L^q(\varOmega))}\\ 
&{}\leq \|(\nabla\varDelta^{-1} f_n)_{n=1}^N\|_{L^p(L^q(\varOmega))} ,
%&{}\leq C_{p,q}\|(f_n)_{n=1}^N\|_{L^p(W^{-1,q}(\varOmega))},
\end{align*}
whence
\begin{equation}\label{Aq-12fn}
\|((-A_q)^{-1/2}f_n)_{n=1}^N\|_{L^p(L^q(\varOmega))}
\leq C_{p,q}\|(f_n)_{n=1}^N\|_{L^p(W^{-1,q}(\varOmega))}.
\end{equation}
\eqref{nablaun}--\eqref{Aq-12fn} yield \eqref{BDFconvex}.  
\end{proof}

\section{Sobolev and related inequalities: proof of Lemma~\ref{lem:framework}}
\label{Sec:Sobolev}

%\begin{definition}
A Banach space $X$ is said to be imbedded into  
another Banach space $Y$, denoted by 
$X\hookrightarrow Y$, if 

(a) $u\in X\implies u\in Y$ and this map from $X$ to $Y$ is one-to-one;

(b) $\|u\|_Y\leq C\|u\|_X$ for all $u\in X$,  where $C$ is a constant independent of $u$.\\
The space $X$ is said to be compactly imbedded into $Y$, denoted by 
$X\hookrightarrow\hookrightarrow Y$, 
if in addition to (a)-(b) the following condition is satisfied:

(c) a bounded subset of $X$ is always a pre-compact subset of $Y$. 
%\end{definition}

\begin{lemma}\label{IntpIneq}
Let $X$, $Y$ and $Z$ be Banach spaces such that 
$X$ is compactly imbedded into $Y$, and 
$Y$ is imbedded into $Z$, i.e.,
\[ X\hookrightarrow\hookrightarrow Y \hookrightarrow Z. \]
%
%(the notation ``$\hookrightarrow\hookrightarrow$'' indicates compact imbedding, 
%and ``$\hookrightarrow$'' indicates imbedding).
Then, for any $\varepsilon>0$, there holds
\[\|u\|_Y\leq \varepsilon \|u\|_X + C_\varepsilon\|u\|_Z .\]
\end{lemma}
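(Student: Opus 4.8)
The plan is to argue by contradiction, exploiting the compactness of the imbedding $X \hookrightarrow\hookrightarrow Y$. Suppose the asserted inequality fails for some $\varepsilon > 0$. Then for every $n \in \N$ there is a nonzero $u_n \in X$ with
\[
\|u_n\|_Y > \varepsilon \|u_n\|_X + n \|u_n\|_Z .
\]
Normalize by setting $v_n = u_n / \|u_n\|_Y$, so that $\|v_n\|_Y = 1$ for all $n$, and the inequality becomes $1 > \varepsilon \|v_n\|_X + n \|v_n\|_Z$. In particular $\|v_n\|_X < 1/\varepsilon$, so $(v_n)$ is a bounded sequence in $X$, and also $\|v_n\|_Z < 1/n \to 0$, so $v_n \to 0$ in $Z$.

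Next I would invoke the compact imbedding: since $(v_n)$ is bounded in $X$ and $X \hookrightarrow\hookrightarrow Y$, there is a subsequence (not relabeled) converging in $Y$, say $v_n \to v$ in $Y$, whence $\|v\|_Y = \lim \|v_n\|_Y = 1$, so $v \neq 0$. On the other hand, convergence in $Y$ implies convergence in $Z$ by the continuous imbedding $Y \hookrightarrow Z$, so $v_n \to v$ in $Z$ as well; but we already know $v_n \to 0$ in $Z$. By uniqueness of limits in the Banach space $Z$ (here one uses that the imbedding maps are one-to-one, so $Y$ and $Z$ share the same underlying limit), we conclude $v = 0$, contradicting $\|v\|_Y = 1$. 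Hence no such $\varepsilon$ can exist, and the inequality holds for every $\varepsilon > 0$ with an appropriate constant $C_\varepsilon$.

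The only subtle point — and the step I would be most careful about — is the identification of limits across the two imbeddings: one must check that the limit $v$ of $(v_n)$ in $Y$ and the limit $0$ of $(v_n)$ in $Z$ are genuinely the same element, which requires that the maps $X \to Y$ and $Y \to Z$ be injective and compatible, exactly as stipulated in conditions (a)--(b) of the definition of imbedding. Everything else is a routine normalization-and-subsequence argument, and there is no computation to grind through.
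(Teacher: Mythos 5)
Your proof is correct and follows essentially the same route as the paper: argue by contradiction, normalize so that $\|v_n\|_Y=1$, extract a $Y$-convergent subsequence by the compact imbedding $X\hookrightarrow\hookrightarrow Y$, and identify the limit with $0$ via the continuous imbedding $Y\hookrightarrow Z$, contradicting $\|v\|_Y=1$. Your added remark about the injectivity/compatibility of the imbedding maps when identifying the limits is a fair point of care, but it is the same argument as in the paper.
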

\begin{proof}
This lemma is probably well known, but since we did not find a reference, we
include the short proof. 

Suppose, on the contrary, that there exists $\varepsilon$ such that 
the inequality above does not hold for all $u\in X$. 
Then there exists a sequence 
$u_n\in X$, $n=1,2,\dotsc,$ such that 
\[\|u_n\|_Y\geq \varepsilon \|u_n\|_X + n\|u_n\|_Z .\]
By a normalization (dividing $u_n$ by a constant), we can 
assume that $\|u_n\|_Y=1$ for all $n\geq 1$. Hence, we have
\[\|u_n\|_X\leq 1/\varepsilon \quad\text{and}\quad\|u_n\|_Z\leq 1/n  .\]

On one hand, since $X$ is compactly embedded into $Y$, 
the boundedness of $u_n$ in $X$ implies the existence of a 
subsequence $u_{n_k}$, $k=1,2,\dotsc,$ which 
converges in $Y$ to some element $u\in Y\hookrightarrow Z$. 
Hence, 
\begin{align}\label{norm_of_u}
\|u\|_Y=\lim_{k\rightarrow \infty}\|u_{n_k}\|_Y=1 . 
\end{align}
On the other hand, $\|u_n\|_Z\leq 1/n$ implies that 
$u_{n_k}$ converges to the zero element in $Z$, 
which means that 
\begin{align}\label{u_is_zero}
u=0 .
\end{align}

Clearly, \eqref{norm_of_u} and \eqref{u_is_zero} contradict 
each other. 
\end{proof}

\begin{lemma}
[Sobolev imbedding]
\label{SobEmbed1} 
For $s>0$, $1<p,q<\infty$ and $d\geq 1$, we have\emph{:} 
\begin{enumerate}[$(1)$]\itemsep=0pt
\item $W^{s,q}(\varOmega)\hookrightarrow\hookrightarrow 
C^{\alpha}(\overline\varOmega)\hookrightarrow L^\infty(\varOmega)$ 
for $\alpha\in(0,s-d/q)$ if $sq>d$, 
\item $W^{s,q}(\varOmega)\hookrightarrow\hookrightarrow 
C^{1,\alpha}(\overline\varOmega)$ 
for $\alpha\in(0,s-1-d/q)$ if $(s-1)q>d$, 
\item $W^{s,p}(\R;X)\hookrightarrow L^{\infty}(\R;X)$ if $sp>1$, 
where $X=L^q(\varOmega)$ or $X=W^{-1,q}(\varOmega)$, 
\item $H^1(\varOmega)\hookrightarrow\hookrightarrow L^{q_0}(\varOmega)$ for all  
$1\leq q_0<2d/(d-2)$ when $d\geq 2$, and 
$q_0=\infty$ when $d=1$.
\end{enumerate}
%where $\hookrightarrow$ indicates imbedding, 
%and $\hookrightarrow\hookrightarrow $ denotes 
%compact imbedding. 
\end{lemma}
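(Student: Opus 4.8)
The plan is to prove the four imbeddings essentially by citing standard references and then filling in the one nonstandard ingredient, namely the \emph{compactness} assertions, via Lemma~\ref{IntpIneq}-style arguments combined with the classical Rellich--Kondrachov theorem. I would organize the proof as four short items matching the four claims.

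For item $(1)$, the continuous imbedding $W^{s,q}(\varOmega)\hookrightarrow C^\alpha(\overline\varOmega)$ for $0<\alpha<s-d/q$ when $sq>d$ is the fractional Sobolev imbedding theorem (e.g.\ Adams \& Fournier, or Triebel); the further imbedding $C^\alpha(\overline\varOmega)\hookrightarrow L^\infty(\varOmega)$ is trivial since $\varOmega$ is bounded. For the \emph{compactness} of $W^{s,q}(\varOmega)\hookrightarrow C^\alpha(\overline\varOmega)$, the clean way is to pick $\alpha<\beta<s-d/q$, note the continuous imbedding $W^{s,q}(\varOmega)\hookrightarrow C^\beta(\overline\varOmega)$, and then invoke the compact imbedding $C^\beta(\overline\varOmega)\hookrightarrow\hookrightarrow C^\alpha(\overline\varOmega)$, which follows from the Arzel\`a--Ascoli theorem (a bounded set in $C^\beta$ is bounded and equi-H\"older, hence precompact in $C^\alpha$ for $\alpha<\beta$). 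Item $(2)$ is handled identically: the continuous imbedding $W^{s,q}(\varOmega)\hookrightarrow C^{1,\alpha}(\overline\varOmega)$ for $0<\alpha<s-1-d/q$ when $(s-1)q>d$ is again the Sobolev imbedding theorem applied to first derivatives, and compactness comes from choosing $\alpha<\beta<s-1-d/q$ and using $C^{1,\beta}(\overline\varOmega)\hookrightarrow\hookrightarrow C^{1,\alpha}(\overline\varOmega)$ via Arzel\`a--Ascoli applied to the functions and their gradients.

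Item $(3)$ is the vector-valued Sobolev imbedding in the time variable: $W^{s,p}(\R;X)\hookrightarrow L^\infty(\R;X)$ whenever $sp>1$. For $s=1$ this is the familiar fact that $W^{1,p}(\R;X)\hookrightarrow C_b(\R;X)$ with the fundamental-theorem-of-calculus estimate $\|u(t)\|_X\le C\|u\|_{W^{1,p}(\R;X)}$; for general $s$ (in particular fractional $s$ with $1/p<s<1$, the case actually needed for condition (ii)) one uses the Sobolev--Slobodeckij characterization of $W^{s,p}(\R;X)$ and the scalar-valued fractional Sobolev imbedding $W^{s,p}(\R)\hookrightarrow C^{s-1/p}(\R)$ applied to $t\mapsto\|u(t)\|_X$, or equivalently a direct Gagliardo-seminorm estimate; this is standard and I would cite Amann or Simon. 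Item $(4)$ is precisely the Rellich--Kondrachov theorem: $H^1(\varOmega)=W^{1,2}(\varOmega)\hookrightarrow\hookrightarrow L^{q_0}(\varOmega)$ for $1\le q_0<2d/(d-2)$ when $d\ge2$ (the exponent $2^*=2d/(d-2)$ being the critical Sobolev exponent, below which the imbedding is compact), and for $d=1$ one has $H^1(\varOmega)\hookrightarrow C(\overline\varOmega)\hookrightarrow\hookrightarrow L^\infty(\varOmega)$ again by Arzel\`a--Ascoli.

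Since the four statements are all classical, the only mildly delicate point is making sure each compactness claim is derived cleanly rather than merely asserted; the recurring device is ``gain a little extra H\"older/Sobolev regularity, then apply Arzel\`a--Ascoli (for the $C^\alpha$, $C^{1,\alpha}$ targets) or Rellich--Kondrachov (for the $L^{q_0}$ target).'' I expect no real obstacle here; the main thing to be careful about is the interplay of the exponents in the strict inequalities $\alpha<s-d/q$ and $\alpha<s-1-d/q$ so that one genuinely has room to interpose an intermediate exponent $\beta$. I would write the proof as roughly half a page, citing Adams \& Fournier for the Sobolev imbeddings, Arzel\`a--Ascoli for the H\"older-space compactness, and Rellich--Kondrachov (or again Adams \& Fournier) for $(4)$.
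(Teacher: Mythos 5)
Your proposal is correct and essentially coincides with the paper's treatment, which simply cites Amann's Sobolev imbedding result for items $(1)$--$(2)$, observes that $(3)$ is a vector extension of $(1)$, and cites Evans (Rellich--Kondrachov) for $(4)$; your device of interposing an exponent $\beta$ with $\alpha<\beta<s-d/q$ and invoking Arzel\`a--Ascoli is exactly the standard way to make the compactness in $(1)$--$(2)$ explicit. One small correction: in the $d=1$ case of $(4)$ the compact arrow belongs to $H^1(\varOmega)\hookrightarrow\hookrightarrow C(\overline\varOmega)$ (bounded $H^1$-sets are bounded and $1/2$-H\"older equicontinuous, then Arzel\`a--Ascoli), not to $C(\overline\varOmega)\hookrightarrow L^\infty(\varOmega)$, which is continuous but never compact.
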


\begin{remark}
(1)--(2) of Lemma \ref{SobEmbed1} are immediate consequences  of 
\cite[p.\ xviii, Sobolev imbedding (18)]{Amann95}; (3) is a simple vector extension of (1);  
(4) can be found in \cite[p.\ 272, Theorem 1]{Evans}. 
\end{remark}

\noindent{\it Proof of (ii) in Lemma~\ref{lem:framework}}. 
%Via interpolation, we have 
%\begin{align}\label{IntpEmbed}
%W^{1,p}(0, T;L^q(\varOmega)) \cap L^p(0,T;W^{2,q}(\varOmega))
%\hookrightarrow 
%W^{1-\theta,p}(0,T;W^{2\theta ,q}(\varOmega)) 
%\end{align}
%for $0\leq \theta \leq 1$. 
%When 
%\begin{align}\label{thetacond}
%(1-\theta)p>1
%\quad\text{and}\quad 
%(2\theta-1) q>d ,
%\end{align} we have 
%(according to Lemma \ref{SobEmbed1}) 
%%
%\[W^{1-\theta,p}(0,T;W^{2\theta ,q}(\varOmega))  
%\hookrightarrow 
%W^{1-\theta,p}(0,T;C^{1,\alpha}(\overline\varOmega))  
%\hookrightarrow 
%L^\infty(0,T;C^{1,\alpha}(\overline\varOmega)) .\]
%%,\quad\mbox{with}\,\,\,
%%0<\alpha<2\theta-1-d/q .
%%
%If $2/p+d/q<1$, then there exists 
%$\theta\in(0,1)$ such that 
%$2/p<2(1-\theta)<1-d/q$. 
%Such a $\theta$  satisfies condition \eqref{thetacond}. 
%This proves (ii) for the case (P2). 
%The case (P1) can be proved in the same way. 
For any $1<p,q<\infty$ 
and $r_1,r_2,r\ge 0$ such that $(1-\theta)r_1+\theta r_2=r$ for 
$\theta\in(0,1)$, we denote by 
$B^{r,q}_{p}(\varOmega):=(W^{r_1,q}(\varOmega),W^{r_2,q}(\varOmega))_{\theta,p}$ the 
Besov space of order $r$  (a real interpolation space between two Sobolev spaces, 
see \cite{Tartar07}). Then, 
via Sobolev embedding, we have
\begin{equation*}%\label{Imbedding}
\begin{aligned}
&W^{1,p}(0,T;L^q(\varOmega))\cap L^p(0,T;W^{2,q}(\varOmega))  \\
&\hookrightarrow 
L^\infty(0,T;(L^q(\varOmega),W^{2,q}(\varOmega))_{1-1/p,p})
\qquad \mbox{see \cite[Proposition 1.2.10]{Lunardi95}}\\
&\simeq
L^\infty(0,T;B^{2-2/p,q}_{p}(\varOmega)) 
\qquad\qquad\qquad\quad\,\,\, \mbox{according to the definition} \\
&\hookrightarrow 
L^\infty(0,T;C^{1,\alpha}(\varOmega))
\qquad\,\,\,\,\, \mbox{when\,\, $(1-2/p)q>d$\,\,\,$\Leftrightarrow$\,\,\,$2/p+d/q<1$.}
\end{aligned}
\end{equation*} 
and
\begin{equation*}%\label{Imbedding}
\begin{aligned}
&W^{1,p}(0,T;W^{-1,q}(\varOmega))\cap L^p(0,T;W^{1,q}(\varOmega))  \\
&\hookrightarrow 
L^\infty(0,T;(W^{-1,q}(\varOmega),W^{1,q}(\varOmega))_{1-1/p,p})
\qquad \mbox{see \cite[Proposition 1.2.10]{Lunardi95}}\\
&\simeq
L^\infty(0,T;B^{1-2/p,q}_{p}(\varOmega)) 
\qquad\qquad\qquad\qquad\quad \mbox{according to the definition} \\
&\hookrightarrow 
L^\infty(0,T;C^{\alpha}(\varOmega))
\qquad\qquad\,\,\,\,\, \mbox{when\,\, $(1-2/p)q>d$\,\,\,$\Leftrightarrow$\,\,\,$2/p+d/q<1$.}
\end{aligned}
\end{equation*} 
\medskip

\noindent{\it Proof of (iii)-(v) in Lemma~\ref{lem:framework}}.  We consider first the setting (P2). 
%The uniform maximal $L^p$-regularity (i) 
%is shown in Section~\ref{Sec:unif-max-reg}. 
%Property (ii) is shown in Section~\ref{sec:sobolev} for $2/p + d/q <1$. 
Property (iii) is standard textbook material. To prove (iv), we note
\[(A(v)-A(w) )u = \nabla\cdot \bigl( (a(v)-a(w)) \nabla u \bigr) = \nabla (a(v)-a(w))\cdot \nabla u 
+  (a(v)-a(w)) \varDelta u\]
and estimate as follows:
%
%$$
%\| (A(v)-A(w) )u\|_{L^q} 
%\le C_R \|v-w\|_{W^{1,qr}} \|u\|_{W^{1,qs}}+C_R \|v-w\|_{L^\infty} \|u\|_{W^{2,q}}
%$$
%
\[\| (A(v)-A(w) )u\|_{L^q(\varOmega)} \le C_R \|v-w\|_{W^{1,\infty}(\varOmega)} \|u\|_{W^{1,q}(\varOmega)}
+C_R \|v-w\|_{L^\infty(\varOmega)} \|u\|_{W^{2,q}(\varOmega)}.\]
%
 % and $s>1$ is such that $ W^{2,q}\subset W^{1,qs}$. 
Since $C^{1,\alpha}(\overline\varOmega)$ is compactly imbedded into 
$W^{1,\infty}(\varOmega)$ and $L^\infty(\varOmega)$, Lemma~\ref{IntpIneq}
gives us the inequalities, for arbitrary $\eps>0$,
\[\begin{aligned}
% \|v-w\|_{W^{1,qr}} & \le \varepsilon \|v-w\|_{C^{1,\alpha}}+C_\varepsilon \|v-w\|_{L^2},\\
  \|v-w\|_{W^{1,\infty}(\varOmega)} & \le \varepsilon \|v-w\|_{C^{1,\alpha}(\overline\varOmega)}
  +C_\varepsilon \|v-w\|_{L^2(\varOmega)},\\
 \|v-w\|_{L^\infty(\varOmega)}   \ \  & \le \varepsilon \|v-w\|_{C^{1,\alpha}(\overline\varOmega)}
 +C_\varepsilon \|v-w\|_{L^2(\varOmega)}.
\end{aligned}\]
The  inequality of (v) follows on estimating
%$$
% \langle \varphi,(A(v)-A(w) )u \rangle 
%\le C_R \| \varphi \| _{H^1}\,  \|v-w\|_{L^{2r}} \|u\|_{W^{1,2s}}
%$$
%
\[ \langle \varphi,(A(v)-A(w) )u \rangle 
\le C_R \| \varphi \| _{H^1(\varOmega)}\,  
\|v-w\|_{L^{\bar q}(\varOmega)} \|u\|_{W^{1,q}(\varOmega)},\]
where $1/\bar q+1/q=1/2$. Note that for the considered $q>d$ we have 
$\bar q=q/(q/2-1)<d/(d/2-1)$, and so $H^1(\varOmega)$ is compactly imbedded into $L^{\bar q}(\varOmega)$ 
(see (4) of Lemma \ref{SobEmbed1} or \cite[Theorem 6.3]{Adams}), so that by 
Lemma~\ref{IntpIneq}
 \[\|v-w\|_{L^{\bar q}(\varOmega)} \le \eps \|v-w \|_{H^1(\varOmega)} 
 + C_\eps \| v-w \|_{L^2(\varOmega)}.\]

We now turn to the setting (P1). 
%Properties (i) and (ii) are shown in 
%Sections~\ref{sec:unif-max-reg} and~\ref{sec:sobolev}, respectively. 
Property (iii)  is the same as in (P2), and (v) has actually been shown above. Property (iv) 
follows from estimating
\[ \langle \varphi,(A(v)-A(w) )u \rangle 
\le C_R \| \varphi \| _{W^{1,q'}(\varOmega)}\,  
\|v-w\|_{L^{\infty}(\varOmega)} \|u\|_{W^{1,q}(\varOmega)},\]
where $1/q+1/q'=1$, and from the bound
\[  \|v-w\|_{L^\infty(\varOmega)} \le \eps \| v-w\|_{C^\alpha(\overline\varOmega)} 
+ C_\eps  \| v-w\|_{L^2(\varOmega)},\]
which follows from Lemma~\ref{IntpIneq}.

\appendix
\section{Boundedness of the Riesz transform}
%\section{Appendix: Boundedness of the Riesz transform}

Let $A_q: D(A_q)\rightarrow L^q(\varOmega)$ be defined by  
$A_qu=\nabla\cdot(b(x)\nabla u)$, where 
\[D(A_q)=\{u\in W^{1,q}(\varOmega): 
\nabla\cdot(b(x)\nabla u)\in L^q(\varOmega)\}
\quad\text{and}\quad 
b\in C^\alpha(\overline\varOmega) \,\,\, \text{satisfies } \eqref{ellipt}.\]

\begin{lemma}
The Riesz transform $\nabla (-A_q)^{-1/2}$ is bounded on $L^q(\varOmega)$ 
for $q_d'<q<q_d$, i.e., 
\[\|\nabla (-A_q)^{-1/2}u\|_{L^q(\varOmega)}
\leq C_q\|u\|_{L^q(\varOmega)} ,
\quad \text{for}\,\,\,\,q_d'<q<q_d,\]
where the constant $C_q$ depends on $q$, $\varOmega$, $\alpha$ and 
$\|b\|_{C^\alpha(\overline\varOmega)}$. 
\end{lemma}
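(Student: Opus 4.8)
The plan is to prove the estimate in two overlapping ranges, $q_d' < q \le 2$ and $2 \le q < q_d$, which together cover $(q_d',q_d)$, using two ingredients that are already available: the Gaussian upper bound \eqref{GKernelE0} for the Dirichlet heat kernel of $-A_q$, which governs the behaviour for $q\le 2$, and the Jerison--Kenig type elliptic $W^{1,q}$-regularity of Lemma~\ref{THMJK}, which is exactly what forces the cut-off at $q_d$ when $q>2$.

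The case $q=2$ is elementary: by \eqref{ellipt} one has $\|\nabla v\|_{L^2(\varOmega)}^2 \le K_0\,\langle v,-A_2 v\rangle$ for $v\in H^1_0(\varOmega)$, and inserting $v=(-A_2)^{-1/2}u$ and using that $-A_2$ is self-adjoint gives $\|\nabla(-A_2)^{-1/2}u\|_{L^2(\varOmega)} \le K_0^{1/2}\|u\|_{L^2(\varOmega)}$. For $1<q<2$ I would then invoke the Calder\'on--Zygmund theory of Riesz transforms associated with a semigroup whose kernel has a Gaussian upper bound: $\overline\varOmega$ equipped with the Euclidean distance and Lebesgue measure is a doubling metric measure space, the kernel bound \eqref{GKernelE0} holds, and the $L^2$-bound just obtained is the required base case; hence, by the theorem of Coulhon and Duong \cite{CoulhonDuong}, $\nabla(-A_q)^{-1/2}$ is of weak type $(1,1)$, and Marcinkiewicz interpolation with the $L^2$-bound yields boundedness on $L^q(\varOmega)$ for all $1<q\le 2$, with a constant depending only on $q$, $\varOmega$ and the ellipticity constant $K_0$. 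Since $q_d>2$, this already covers $q_d'<q\le 2$.

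For $2<q<q_d$ the mechanism is genuinely different, because on a merely Lipschitz domain the Dirichlet heat kernel of $-A_q$ has \emph{no} Gaussian gradient bound --- this is precisely why the range must be truncated at $q_d$. Here I would use a weak reverse H\"older inequality for gradients of solutions: if $u$ solves $\nabla\cdot(b\nabla u)=0$ in $B_{2r}(x_0)\cap\varOmega$ with $u=0$ on $\partial\varOmega\cap B_{2r}(x_0)$, then interior Schauder estimates (valid since $b\in C^\alpha(\overline\varOmega)$) away from the boundary, combined with the boundary regularity underlying Lemma~\ref{THMJK} near $\partial\varOmega$, show that the $L^q$-average of $|\nabla u|$ over $B_r(x_0)\cap\varOmega$ is controlled by its $L^2$-average over $B_{2r}(x_0)\cap\varOmega$, for every $q<q_d$, with a constant depending only on $K_0$, $\varOmega$, $\alpha$ and $\|b\|_{C^\alpha(\overline\varOmega)}$. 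Feeding this reverse H\"older inequality, the kernel bound \eqref{GKernelE0}, and the $L^2$-boundedness of $\nabla(-A_2)^{-1/2}$ into the real-variable extrapolation argument of Shen \cite{Shen} --- his criterion for upgrading an $L^2$-estimate to an $L^q$-estimate under a weak reverse H\"older hypothesis --- gives boundedness of $\nabla(-A_q)^{-1/2}$ on $L^q(\varOmega)$ for $2\le q<q_d$, again with the stated dependence of the constant. Combining the two ranges proves the lemma for all $q_d'<q<q_d$.

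The main obstacle is the range $q>2$: one needs the \emph{sharp} Dirichlet exponent $q_d$ --- the nontrivial input of Lemma~\ref{THMJK} --- together with a device (Shen's real-variable argument, or, equivalently, Auscher--Martell type extrapolation) that transfers a purely elliptic $W^{1,q}$-estimate into a bound for the nonlocal operator $\nabla(-A_q)^{-1/2}$; by contrast, $q\le 2$ costs nothing beyond \eqref{GKernelE0} and the easy $L^2$ bound. One could instead try to reach $q>2$ by duality, from boundedness of the adjoint ``reverse Riesz transform'' $(-A_{q'})^{-1/2}\nabla\cdot$ on $L^{q'}(\varOmega)$ with $q'<2$, but that operator in the range $q'<2$ is no easier and requires the same ingredients, so I would not pursue that route.
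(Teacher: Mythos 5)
Your plan is correct, and for the decisive range $2<q<q_d$ it is the same mechanism the paper uses: Shen's criterion from \cite{Shen05}, which reduces $L^q$-boundedness of $\nabla(-A_q)^{-1/2}$ to the weak reverse H\"older estimate \eqref{HomoEst0} for gradients of local solutions of $\nabla\cdot(b\nabla u)=0$, with the Jerison--Kenig type estimate of Lemma~\ref{THMJK} supplying the critical exponent $q_d$. The differences are two. First, you split off $q_d'<q\le 2$ and treat it by the energy bound plus Coulhon--Duong weak $(1,1)$ theory under the Gaussian bound \eqref{GKernelE0} and interpolation; the paper does not make this case distinction, since its verification of \eqref{HomoEst0} works for the whole range (and for $q\le 2$ the reverse H\"older condition is anyway immediate from H\"older's inequality), so your lower-range argument is a valid but avoidable detour. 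Second, and more substantively, you leave the reverse H\"older inequality itself at the level of ``interior Schauder estimates plus the boundary regularity underlying Lemma~\ref{THMJK}''; the paper actually proves it by a concrete localization: with a cut-off $\omega$ one writes the equation \eqref{HomoEqu2} for $\omega(u-u_{2r})$, whose right-hand side lies in $W^{-1,q}$, applies Lemma~\ref{THMJK} to gain integrability from $s=qd/(q+d)$ to $q$ (giving \eqref{HomoEst2}), and iterates finitely many times until the exponent on the right drops to or below $2$, after which H\"older's inequality yields \eqref{HomoEst0}. Your named ingredients are the right ones, but this cut-off-and-iterate step is a genuine piece of work your sketch still owes; near a Lipschitz boundary there is no Schauder theory to lean on, and the self-improvement from an $L^2$-average to an $L^q$-average really comes from the $W^{1,q}\!\leftarrow\! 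W^{-1,q}$ mapping property of Lemma~\ref{THMJK}, not from pointwise gradient bounds.
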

\begin{proof}
It has been proved in \cite[Theorem B]{Shen05}
that the Riesz transform $\nabla (-A_q)^{-1/2}$ is bounded on $L^q(\varOmega)$
if and only if the solution of
the homogeneous equation
\begin{equation}\label{HomoEqu}
%\sum_{i,j=1}^N\partial_i(a \partial_j u)=0
\nabla\cdot(b(x)\nabla u) = 0
\end{equation}
satisfies the following local estimate:
\begin{equation}\label{HomoEst0}
\bigg(\frac{1}{r^n}\int_{\varOmega\cap B_r(x_0)}
|\nabla u|^q\d x\bigg)^{\frac{1}{q}}
\leq C
\bigg(\frac{1}{r^n}\int_{\varOmega\cap B_{\sigma_0r}(x_0)}
|\nabla u|^2\d x\bigg)^{\frac{1}{2}} ,
%\quad\forall\, x_0\in\varOmega,\,\,\, 0<r<r_0,
\end{equation}
for all $x_0\in\varOmega$ and $0<r<r_0,$ where $r_0$ and $\sigma_0\geq 2$
are any given small positive constants such that $\varOmega\cap B_{\sigma_0r_0}(x_0)$ 
is the intersection of $B_{\sigma_0r_0}(x_0)$ with a Lipschitz graph.
It remains to prove \eqref{HomoEst0}.

Let $\omega$ be a smooth cut-off function which equals
zero outside $B_{2r}:=B_{2r}(x_0)$
and equals 1 on $B_r$.
Extend $u$ to be zero on $B_{2r}\backslash\varOmega$
and denote by $u_{2r}$ the average of $u$ over $B_{2r}$.
Then (\ref{HomoEqu}) implies
\begin{equation}\label{HomoEqu2}
\nabla \cdot (b\nabla (\omega (u-u_{2r})))
=\nabla \cdot (b(u-u_{2r})\nabla \omega )
+b\nabla\cdot\omega \cdot\nabla (u-u_{2r})
\quad\text{in}\,\,\,\varOmega,
\end{equation}
and the $W^{1,q}$ estimate (Lemma \ref{THMJK}) 
implies
\begin{align*}%\label{HomoEst1}
\|\omega (u-u_{2r})\|_{W^{1,q}(\varOmega)}
&\leq C\|(u-u_{2r})\nabla \omega\|_{L^q(\varOmega)}
+C\|\nabla \omega\cdot\nabla  u\|_{W^{-1,q}(\varOmega)} \\
&\leq C\|(u-u_{2r})\nabla \omega\|_{L^q(\varOmega)} 
+C\|\nabla \omega\cdot\nabla u\|_{L^s(\varOmega)}  \\
&= C\|(u-u_{2r})\nabla \omega\|_{L^q(B_{2r})} 
+C\|\nabla \omega\cdot\nabla  u\|_{L^s(B_{2r})}  \\
&\leq Cr^{-1}\|\nabla u\|_{L^s(B_{2r})} , 
\end{align*}
where $s=qd/(q+d)<q$ satisfies
$L^s(\varOmega)\hookrightarrow W^{-1,q}(\varOmega)$
and $W^{1,s}(\varOmega)\hookrightarrow L^q(\varOmega)$.
The last inequality implies
\begin{equation}\label{HomoEst2}
\|\nabla u\|_{L^q(\varOmega\cap B_{r})}
\leq Cr^{-1}\|\nabla u\|_{L^s(\varOmega\cap B_{2r})} .
\end{equation}
If $s\leq 2,$ then one can derive
\[%\label{HomoEst3}
\|\nabla u\|_{L^q(\varOmega\cap B_{r})}
\leq Cr^{d/q-d/2}\|\nabla u\|_{L^2(\varOmega\cap B_{2r})} \]
by using once more H\"older's inequality on the right-hand side.
Otherwise, one only needs a finite number of iterations of (\ref{HomoEst2})
to reduce $s$ to be less than $2$. This completes the proof of (\ref{HomoEst0}).
\end{proof}

\subsection*{Acknowledgment}
The research stay of Buyang Li at the University of T\" ubingen
is funded by the Alexander von Humboldt Foundation.

\bibliographystyle{amsplain}

\end{document}